\newcommand{\dt}{\dd{t}}
\newcommand{\paq}{\partial_q}
\newcommand{\pap}{\partial_p}
\newcommand{\papi}{\partial_{p_i}}
\newcommand{\papj}{\partial_{p_j}}
\newcommand{\half}{{\nicefrac{1}{2}}}
\newcommand{\ie}{\textit{ie.} }
\newcommand{\calA}{\mathcal{A}}
\newcommand{\calB}{\mathcal{B}}
\newcommand{\calC}{\mathcal{C}}
\newcommand{\calD}{\mathcal{D}}
\newcommand{\calE}{{\calD \times \mathbb{R}^{D}}}
\newcommand{\calH}{\mathcal{H}}
\newcommand{\calL}{\mathcal{L}}
\newcommand{\bbN}{\mathbb{N}}
\newcommand{\bbZ}{\mathbb{Z}}
\newcommand{\bbR}{\mathbb{R}}
\newcommand{\bbT}{\mathbb{T}}
\newcommand{\bbRD}{{\mathbb{R}^D}}
\newcommand{\Esp}{\mathbb{E}}
\newcommand{\Espmu}{\mathbb{E}_\mu}
\newcommand{\rme}{\mathrm{e}}
\newcommand{\rmH}{\mathrm{H}}
\newcommand{\rmL}{\mathrm{L}}
\newcommand{\rmDom}{\mathrm{Dom}}
\newcommand{\bfI}{\mathbf{I}}
\newcommand{\bfL}{\mathbf{L}}
\newcommand{\bfN}{\mathbf{N}}
\newcommand{\bfP}{\mathbf{P}}
\newcommand{\bfQ}{\mathbf{Q}}
\newcommand{\bfS}{\mathbf{S}}
\newcommand{\bfU}{\mathbf{U}}
\newcommand{\bfX}{\mathbf{X}}
\newcommand{\bfY}{\mathbf{Y}}
\newcommand{\bfone}{\mathbf{1}}
\newcommand{\scrC}{\mathscr{C}}
\newcommand{\scrH}{\mathscr{H}}
\newcommand{\naq}{\nabla_q}
\newcommand{\nap}{\nabla_p}
\newcommand{\Lham}{\calL_{\rm{ham}}}
\newcommand{\LFD}{{\calL_\mathrm{FD}}}
\newcommand{\Lovd}{{\calL_\mathrm{ovd}}}
\newcommand{\tLovd}{{\widetilde \calL_\mathrm{ovd}}}
\newcommand{\tcalLM}{\widetilde \calL_M}
\newcommand{\calLinv}{{\calL^{-1}}}
\newcommand{\Zinvnu}{Z_{\beta,\nu}^{-1}}
\newcommand{\Zinvmu}{Z_{\beta,\mu}^{-1}}
\newcommand{\Cnu}{C_\nu}
\newcommand{\lambdaham}{\lambda_{\rm{ham}}}
\newcommand{\core}{\scrC}
\newcommand{\Lmu}{{\rmL^2(\mu)}}
\newcommand{\tLmu}{{\rmL_0^2(\mu)}}
\newcommand{\Hmu}{{\rmH^1(\mu)}}
\newcommand{\Hsmu}{{\rmH^s(\mu)}}
\newcommand{\Htwomu}{{\rmH^2(\mu)}}
\newcommand{\Hnu}{{\rmH^1(\nu)}}
\newcommand{\Hsnu}{{\rmH^s(\nu)}}
\newcommand{\Hskappa}{{\rmH^s(\kappa)}}
\newcommand{\Lnu}{{\rmL^2(\nu)}}
\newcommand{\Lkappa}{{\rmL^2(\kappa)}}
\newcommand{\calBLmu}{{\calBLmu}}
\newcommand{\calBLnu}{{\calBLnu}}
\newcommand{\calBLkappa}{{\calBLkappa}}
\newcommand{\lang}{\left\langle}
\newcommand{\rang}{\right\rangle}
\newcommand{\betainv}{\beta^{-1}}
\newcommand{\betainvinv}{\beta^{-2}}
\newcommand{\Pinot}{\Pi_0}
\newcommand{\Pip}{\Pi_p}
\newcommand{\Picap}{\Pi_{M,0}}
\newcommand{\PiKL}{\Pi_{KL}}
\newcommand{\PiKLnot}{\Pi_{KL,0}}
\newcommand{\PiK}{\Pi_K^q}
\newcommand{\PiKm}{\Pi_{K-1}^{q}}
\newcommand{\PiKo}{\Pi_K^{q\perp}}
\newcommand{\PiKmo}{\Pi_{K-1}^{q\perp}}
\newcommand{\PiKpo}{\Pi_{K+1}^{q\perp}}
\newcommand{\PiL}{\Pi_L^p}
\newcommand{\PiLm}{\Pi_{L-1}^p}
\newcommand{\PiLo}{\Pi_L^{p\perp}}
\newcommand{\PiLmo}{\Pi_{L-1}^{p\perp}}
\newcommand{\PiLp}{\Pi_{L+1}^p}
\newcommand{\Pif}{{D^{+-}_K}}
\newcommand{\VKL}{V_{KL}}
\newcommand{\PhiKL}{\Phi_{KL}}
\newcommand{\PiM}{\Pi_M}
\newcommand{\VM}{V_M}
\newcommand{\PhiM}{\Phi_M}
\newcommand{\Vcap}{V_{M,0}}
\newcommand{\lambdagamma}{\lambda_\gamma}
\newcommand{\lambdagammaM}{\lambda_{\gamma, M}}
\newcommand{\tlambdagammaM}{\widehat{\lambda}_{\gamma, M}}
\newcommand{\tlambdagammaKL}{\widehat{\lambda}_{\gamma, KL}}
\renewcommand{\leq}{\leqslant}
\renewcommand{\geq}{\geqslant}
\newcommand{\scrD}{\mathscr{D}}
\newcommand{\dps}{\displaystyle}
\newtheorem{theorem}{Theorem}
\newtheorem{lemma}{Lemma}
\newtheorem{assumption}{Assumption}
\newtheorem{prop}{Proposition}
\newtheorem{corollary}{Corollary}
\newtheorem{remark}{Remark}
\newtheorem{definition}{Definition}
\begin{document}

\title{Spectral methods for Langevin dynamics and associated error estimates}
\author{Julien Roussel and Gabriel Stoltz\\
\small Université Paris-Est, CERMICS (ENPC), Inria, F-77455 Marne-la-Vallée, France \\
}

\date{}

\maketitle

\abstract{
We prove the consistency of Galerkin methods to solve Poisson equations where the differential operator under consideration is hypocoercive. We show in particular how the hypocoercive nature of the generator associated with Langevin dynamics can be used at the discrete level to first prove the invertibility of the rigidity matrix, and next provide error bounds on the approximation of the solution of the Poisson equation. We present general convergence results in an abstract setting, as well as explicit convergence rates for a simple example discretized using a tensor basis. Our theoretical findings are illustrated by numerical simulations.
}

\paragraph{Key words.} Langevin dynamics, spectral methods, Poisson equation, error estimates
\paragraph{2000 Mathematics Subject Classification.} 82C31, 35H10, 65N35 

\section{Introduction}

Statistical physics gives a theoretical framework to bridge the gap between microscopic and macroscopic descriptions of matter~\cite{Balian07}. This is done in practice with numerical methods known as molecular simulation~\cite{Allen87,Frenkel02,Tuckerman10,Leimkuhler16_book}. Despite its intrinsic limitations on spatial and timescales, molecular simulation has been used and developed over the past 50~years, and recently gained some recognition through the 2013 Chemistry Nobel Prize. One important aim of molecular dynamics is to quantitatively evaluate macroscopic properties of interest, obtained as averages of functions of the full microstate of the system (positions and velocities of all atoms in the system) with respect to some probability measure, called thermodynamic ensemble. Some properties of interest are static (a.k.a. thermodynamic properties): heat capacities; equations of state relating pressure, density and temperature; etc. Other properties of interest include some dynamical information. This is the case for transport coefficients (such as thermal conductivity, shear viscosity, etc) or time-dependent dynamic properties such as Arrhenius constants which parametrize chemical kinetics.

From a technical viewpoint, the computation of macroscopic properties requires in any case the sampling of high-dimensional measures. We consider in this work the computation of properties in the canonical ensemble, characterized by the Boltzmann--Gibbs measure, which models systems at constant temperature. One popular way to sample the canonical ensemble is provided by the Langevin dynamics. Denoting by $D$ the dimension of the system, by $q \in \calD$ the positions of the particles in the system and by $p \in \bbR^D$ their momenta, the Langevin dynamics reads
\begin{equation}
\label{eq:langevin dynamics}
\left\{
\begin{aligned}
  \dd q_t &= \frac{p_t}{m} \, \dd t, \\
  \dd p_t &= \left(-\nabla V(q_t) - \gamma \frac{p_t}{m} \right) \, \dd t + \sqrt{\frac{2 \gamma}{\beta}} \, \dd W_t,
\end{aligned}
\right.
\end{equation}
where $\beta>0$ is proportional to the inverse temperature, $m>0$ is the mass of the particles\footnote{Our results can be extended to the case of any symmetric positive definite mass matrix $M$ but we focus on the case when $M$ is proportional to the identity matrix for simplicity.}, $\gamma > 0$ is the friction coefficient and $W_t$ is a standard Brownian motion in dimension~$D$. The potential energy $V:\calD \rightarrow \bbR$ is supposed to be a smooth function. In practice, $\calD$ is either a compact domain with periodic boundary conditions, as for example $\calD=(a\mathbb{T})^{D}$ where $\mathbb{T} = \bbR/\bbZ$ is the unit torus and $a>0$ denotes the size of the simulation cell; or the unbounded space $\calD = \bbR^D$.  
When $\rme^{-\beta V}$ is integrable, the Langevin dynamics admits as a unique invariant measure the canonical measure
\begin{equation}
  \label{eq:mueq}
  \mu(\dd q \, \dd p) = \Zinvmu \rme^{-\beta H(q,p)} \, \dd q \, \dd p, 
  \qquad  
  H(q,p) = V(q) + \frac {|p|^2} {2m},
\end{equation}
where the partition functions $Z_{\beta, \mu}$ is a normalization coefficient. 

In several situations, one is interested in solutions of Poisson equations of the form 
\begin{equation}
  \label{eq:poisson problem}
  -\calL \Phi = R - \Esp_\mu[R],
\end{equation}
where $\calL$ denotes the generator of the Langevin dynamics~\eqref{eq:langevin dynamics}. For instance, asymptotic variances of ergodic averages or transport coefficients can be written as
\begin{equation}
  \label{eq:autocorrelation}
  \int_\calE -\calLinv \left( R - \Esp_\mu[R] \right)  S \, \dd \mu
\end{equation}
for some functions $R$ and $S$. For the asymptotic variance related to the time average of an observable $R$, one has $S = 2R$. For transport coefficients, $R$ would be the system response whereas $S$ is the conjugate response (see for instance the presentation in~\cite[Section~5]{Lelievre16}). In practice, quantities such as~\eqref{eq:autocorrelation} are evaluated by Monte Carlo strategies, where the quantity of interest is rewritten as the integral of a time-dependent correlation function (the famous Green--Kubo formula), which is approximated by independent realizations of the process. In some cases however, spectral methods are used to solve the Poisson equation~\eqref{eq:poisson problem}, see for instance~\cite{Risken96,Latorre13,Redon16,PavVog08}.

The error analysis associated with spectral Galerkin methods faces several difficulties. The most important one probably is that the generator $\calL$ of the Langevin dynamics is not an elliptic operator, and that it is not naturally associated with a quadratic form. Many approximation results exist for elliptic operators, see for instance~\cite{Chatelin11}. In the context of molecular dynamics, elliptic operators correspond to overdamped Langevin dynamics, which are effective dynamics on the positions only. A Lax-Milgram theorem holds for the quadratic form associated with the generator of the overdamped Langevin dynamics, which makes it possible to quantify the error on the solution of Poisson equations, as recently done in~\cite{Abdulle17}. In contrast, the generator $\calL$ of the Langevin dynamics~\eqref{eq:langevin dynamics} is invertible but not coercive, so that a dedicated treatment is required to obtain error estimates. This is done here by a perturbation of the proof of invertibility obtained as a corollary of the decay estimates provided in~\cite{Dolbeault09,Dolbeault15}, which builds on the theory of hypocoercivity~\cite{Villani09}. Note that this proof applies to a large class of hypocoercive operators. In this work we restrict ourselves to the Langevin dynamics, the proofs being directly transposable for operators satisfying the hypotheses presented in~\cite{Dolbeault15}. Let us also mention previous results on the numerical analysis of hypocoercive operators, relying on finite element or finite difference methods, and providing finite time estimates~\cite{Foster14,Porretta17}.

\medskip

This article is organized as follows. We first recall some fundamental properties of the Langevin dynamics in Section~\ref{s:equili}, where we describe in particular the approach developed in~\cite{Dolbeault09,Dolbeault15}. We next provide in Section~\ref{s:discrete convergence} general a priori error estimates for the solutions of Poisson equations~\eqref{eq:poisson problem}. One of the key point to state such error estimates is to prove the invertibility of the generator restricted to the Galerkin space, which can be shown by adapting the hypocoercive approach of~\cite{Dolbeault09,Dolbeault15}. We finally turn in Section~\ref{s:eq appli} to an application to a simple, one-dimensional setting, where explicit convergence rates can be obtained. Numerical simulations are also performed to test the relevance of the bounds we provide. Some technical results are gathered in the appendices.

\section{Convergence of the Langevin dynamics}
\label{s:equili}

We recall in this section useful theoretical results on exponential convergence rates for the semigroup $\rme^{t \calL}$ associated with the generator of the Langevin dynamics, following the methodology introduced in~\cite{Dolbeault09,Dolbeault15} and further made precise in~\cite{Grothaus14} (note that the latter works rather considered the adjoint of the generator~$\calL$, the so-called Fokker--Planck operator, but this does not change the structure of the proof, see Remark~\ref{rmk:convergence_FP} below); see also~\cite{Iacobucci17} for an application to Langevin dynamics. We formulate the result both for bounded and unbounded position spaces.

In the following we consider all operators as defined on the Hilbert space $L^2(\mu)$. The adjoint of a closed operator $T$ on $\Lmu$ is denoted by $T^*$. The scalar product and norm on $\Lmu$ are respectively denoted by $\lang \cdot, \cdot \rang$ and $\| \cdot \|$. In fact, it is convenient in many cases to work in the subspace 
\begin{equation}
  \tLmu = \left\{ \varphi \in \Lmu \left| \int_\calE \varphi \, \dd \mu = 0 \right. \right\}
\end{equation}
of $L^2(\mu)$. The orthogonal projector onto $\tLmu$ is defined by
\begin{equation}
  \forall \varphi \in \Lmu, \qquad \Pinot \varphi = \varphi - \Espmu(\varphi).	
\end{equation}
Since 
\[
\left(\rme^{t \calL}\varphi\right)(q,p) = \mathbb{E}\left(\varphi(q_t,p_t) \, \Big| \, (q_0,p_0)=(q,p) \right)
\]
where the expectation is over all the realizations of the Brownian motion in~\eqref{eq:langevin dynamics}, it is expected that $\rme^{t \calL}\varphi$ converges to $\Espmu(\varphi)$. Therefore, $\rme^{t \calL}\varphi$ converges to~0 for $\varphi \in \tLmu$. In order to state a precise convergence result, we need some conditions on the potential~$V$, and on the marginal measure  of $\mu$ in the position variable. The marginal measures in the position and momentum variables are respectively
\begin{equation}
  \label{eq:marginal measures}
  \nu(\dd q) = \Zinvnu \rme^{-\beta V(q)} \, \dd q, 
  \qquad  
  \kappa(\dd p) = \left(\frac{\beta}{2\pi m}\right)^{D/2} \rme^{-\beta \frac {|p|^2} {2 m} } \, \, \dd p.
\end{equation}
We denote by $\Hsnu$ the weighted Sobolev spaces of index $s \in \bbN$ composed of functions $\varphi(q)$ of the position variables for which $\partial_q^\alpha \varphi \in \Lmu$ for any multi-index $\alpha=(\alpha_1,\dots,\alpha_D) \in \bbN^D$ such that $|\alpha| = \alpha_1 + \dots \alpha_D \leq s$ (where $\partial_q^\alpha = \partial_{q_1}^{\alpha_1} \dots \partial_{q_D}^{\alpha_D}$). The spaces $\Hskappa$ and $\Hsmu$ are defined in a similar way.

\begin{assumption}
  \label{ass:potential}
  The potential $V$ is smooth, and the marginal measure $\nu$ satisfies a Poincaré inequality with constant $\Cnu > 0$: for any function of the positions $\varphi \in \Hnu$,
  \begin{equation}
    \label{eq:poincare nu}
    \left\| \varphi - \int_\calD \varphi \, \dd \nu \right\|_\Lnu^2 \leq \frac 1 {\Cnu} \| \naq \varphi \|_\Lnu^2.
  \end{equation}
  Moreover, there exist $c_1 > 0$, $c_2 \in [0,1)$ and $c_3 > 0$ such that $V$ satisfies
    \begin{equation}
      \label{eq:regularization condition}
      \Delta V \leq c_1 + \frac {c_2} 2 | \nabla V |^2, \quad |\nabla^2 V | \leq c_3 \left( 1 + | \nabla V | \right).
    \end{equation}
\end{assumption}

Note that conditions~\eqref{eq:poincare nu} and~\eqref{eq:regularization condition} are automatically satisfied when $\calD$ is compact. The Poincar\'e inequality holds when there exists $a \in (0,1)$ such that (see~\cite{Bakry08})
\begin{equation}
  \liminf_{|q| \to \infty} a \beta | \nabla V (q)|^2 - \Delta V(q) > 0.
\end{equation}
The precise convergence result is then the following~\cite{Dolbeault09,Dolbeault15} (the proof is recalled in Appendix~\ref{app:proof hypocoercivity}).

\begin{theorem}[Hypocoercivity in $\Lmu$]
  \label{th:hypocoercivity}
  Suppose that Assumption~\ref{ass:potential} holds. Then there exist $C > 0$ and $\lambda_\gamma > 0$ (which are explicitly computable in terms of the parameters of the dynamics, $C$ being independent of $\gamma>0$) such that, for any initial datum $\varphi \in \tLmu$,
  \begin{equation}
    \label{eq:cv_expo_L2}
    \forall t \geq 0, \qquad \left\| \rme^{t \calL} \varphi \right\| \leq C \rme^{-\lambda_\gamma t} \| \varphi \|.
  \end{equation}
  Moreover, the convergence rate is of order $\min(\gamma,\gamma^{-1})$: there exists $\overline{\lambda} > 0$ such that
  \[
  \lambda_\gamma \geq \overline{\lambda} \min(\gamma,\gamma^{-1}).
  \]
\end{theorem}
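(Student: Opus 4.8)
The plan is to follow the hypocoercivity strategy of Dolbeault, Mouhot and Schmeiser. The key object is a modified entropy (Lyapunov) functional of the form
\[
\calH[\varphi] = \frac12 \| \varphi \|^2 + \varepsilon \, \lang A \varphi, \varphi \rang,
\]
where $A = \left( 1 + (\Piq B)^* (\Piq B) \right)^{-1} (\Piq B)^*$, with $B = \nap$ (or a suitable modification involving $M^{-1}$) denoting the ``gradient in momentum'' part of the dissipative component of $\calL$, and $\Piq$ the projection onto functions depending only on $q$. For $\varepsilon > 0$ small enough, $\calH[\varphi]$ is equivalent to $\half\|\varphi\|^2$, so it suffices to establish an exponential decay estimate for $\calH[\rme^{t\calL}\varphi]$. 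First I would decompose $\calL = \Lham + \gamma \Lthm$ into its antisymmetric (Hamiltonian transport) part and its symmetric (Ornstein--Uhlenbeck in $p$) part, noting that $\Lthm$ is nonpositive with kernel exactly the functions of $q$ alone; on that kernel the ``macroscopic'' coercivity is supplied by the Poincaré inequality~\eqref{eq:poincare nu} for $\nu$, acting through the commutator $[\Lham, \Piq]$.

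Next I would differentiate $\calH[\rme^{t\calL}\varphi]$ in time and bound each of the resulting terms. The term $\gamma \lang \Lthm \varphi, \varphi\rang$ controls $\|\nap\varphi\|^2$ on the orthogonal complement of functions of $q$; the term $\varepsilon \lang A \Lham \Piq \varphi, \varphi \rang$ produces, via the Poincaré inequality, a negative contribution proportional to $\|\Pinot \Piq \varphi\|^2$, i.e.\ coercivity in the $q$-direction; and the remaining cross terms ($\varepsilon \lang A \Lham (1-\Piq)\varphi,\varphi\rang$, $\varepsilon \lang A \Lthm \varphi,\varphi\rang$, $\varepsilon\lang A^* \cdots\rang$, and the terms involving $\nabla^2 V$) must be shown to be dominated by the two good terms using Young's inequality, together with the boundedness of the auxiliary operators $A$, $AB$, $A\Lham$ — this is where the regularization conditions~\eqref{eq:regularization condition} on $V$ enter, controlling the second derivatives of the potential that appear when commuting $\nap$ past $\Lham$. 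Choosing $\varepsilon$ small (independently of $\gamma$) then yields $\frac{d}{dt}\calH[\rme^{t\calL}\varphi] \leq -c \left( \gamma \|\nap\varphi\|^2 + \varepsilon \|\Pinot\Piq\varphi\|^2 \right)$, and combining with the Poincaré inequality for the full measure $\mu$ (which follows from the ones for $\nu$ and $\kappa$) bounds the right-hand side above by $-\lambda_\gamma \calH[\varphi]$; Grönwall's lemma and the norm equivalence give~\eqref{eq:cv_expo_L2}.

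For the scaling $\lambda_\gamma \geq \overline\lambda \min(\gamma,\gamma^{-1})$, I would track the $\gamma$-dependence through the argument: the good dissipative term carries a factor $\gamma$, while the cross terms that must be absorbed carry factors $1$, $\gamma$, or $\gamma^{-1}$ depending on how many times $\Lthm$ versus $\Lham$ appears; optimizing the free constants (in particular taking $\varepsilon \sim \min(1,\gamma^{-2})$ or similar) produces a rate behaving like $\gamma$ for small friction and like $\gamma^{-1}$ for large friction, uniformly in the potential through the fixed constants $\Cnu, c_1, c_2, c_3$. The main obstacle, and the step requiring the most care, is the control of the cross terms involving $A\Lham$ and the second derivatives $\nabla^2 V$: one must verify that $\Piq B$ is closed with dense domain so that $A$ is well-defined and bounded with $\|A\|\leq \half$ and $\|AB\| \leq 1$ (an abstract lemma from the Dolbeault--Mouhot--Schmeiser framework), and that the operator $A\calL$ applied to the relevant pieces is bounded — this last estimate is precisely where~\eqref{eq:regularization condition} is used, and getting the $\gamma$-dependence of its norm right is what ultimately fixes the $\min(\gamma,\gamma^{-1})$ behaviour. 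Since the full argument is standard but lengthy, I would relegate the detailed verification to Appendix~\ref{app:proof hypocoercivity} and in the main text only indicate the structure above.
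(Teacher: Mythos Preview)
Your overall strategy is the right one and matches the paper's: build a modified entropy $\calH[\varphi] = \frac12\|\varphi\|^2 \pm \varepsilon\lang A\varphi,\varphi\rang$, show it is equivalent to $\|\cdot\|^2$, differentiate along the semigroup, combine microscopic coercivity (Poincar\'e for $\kappa$) with macroscopic coercivity (Poincar\'e for $\nu$), absorb the cross terms, and conclude by Gr\"onwall. However, two concrete points in your proposal would prevent the argument from closing.

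\medskip

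\textbf{The operator $A$.} In the Dolbeault--Mouhot--Schmeiser construction, $A$ is built from the \emph{transport} part applied to the projection, namely $A = \big(1+(\Lham\Pip)^*(\Lham\Pip)\big)^{-1}(\Lham\Pip)^*$, where $\Pip$ (your $\Piq$) is the projection onto functions of~$q$. Your choice $A = (1+(\Pip\nap)^*(\Pip\nap))^{-1}(\Pip\nap)^*$, with $B=\nap$ the dissipative gradient, does not work: since $\Pip\Lham\Pip=0$ (the Hamiltonian transport of a function of~$q$ has zero $p$-average), your macroscopic term $\lang A\Lham\Pip\varphi,\varphi\rang$ equals $(1+\cdots)^{-1}\nap^*\Pip\Lham\Pip\varphi = 0$ identically, and no coercivity in the $q$-direction is produced. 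The paper's $A$ instead reduces to $\beta^{-1}\big(1-\Lovd\big)^{-1}\naq^*\Pip\nap$ (see~\eqref{eq:reformulate A}), which brings in the overdamped resolvent and makes $A\Lham\Pip \geq \lambdaham\Pip$ via the Poincar\'e inequality for~$\nu$.

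\medskip

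\textbf{The scaling of $\varepsilon$.} You cannot take $\varepsilon$ independent of~$\gamma$. The cross term $\varepsilon\gamma\lang A\LFD\varphi,\varphi\rang$ is of order $\varepsilon\gamma\|(1-\Pip)\varphi\|\,\|\Pip\varphi\|$ and cannot be absorbed for large~$\gamma$ unless $\varepsilon = \mathrm{O}(\gamma^{-1})$; conversely, for small~$\gamma$ the microscopic term $\gamma\|(1-\Pip)\varphi\|^2$ is too weak to absorb the order-$\varepsilon$ remainder $\varepsilon\lang A\Lham(1-\Pip)\varphi,\varphi\rang$ unless $\varepsilon=\mathrm{O}(\gamma)$. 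The paper takes $\varepsilon=\overline{\varepsilon}\min(\gamma,\gamma^{-1})$ and then checks that the smallest eigenvalue of the resulting $2\times 2$ quadratic form in $(\|\Pip\varphi\|,\|(1-\Pip)\varphi\|)$ behaves like $\min(\gamma,\gamma^{-1})$ in both regimes; this is what produces $\lambda_\gamma\geq\overline{\lambda}\min(\gamma,\gamma^{-1})$. Your tentative $\varepsilon\sim\min(1,\gamma^{-2})$ does not yield this scaling.
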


\begin{remark}
  \label{rmk:convergence_FP}
  Theorem~\ref{th:hypocoercivity} admits a dual version in terms of probability measures. Consider an initial condition $\psi_0 \in \Lmu$, which represents the density with respect to $\mu$ of a probability measure $f_0 = \psi_0 \mu$. In particular, 
  \[
  \psi_0 \geq 0, \qquad \int_\calE \psi_0 \, d\mu = 1.
  \]
  Then the time-evolved probability measure $f_t = \psi_t \mu$ with $\psi_t = \rme^{t \calL^*} \psi_0$ converges exponentially fast to~$\mu$ in the following sense:
  \begin{equation}
    \forall t \geq 0, \qquad \| \psi_t - \bfone \| \leq C \rme^{-\lambda_\gamma t} \| \psi_0 \|.
  \end{equation}
\end{remark}

The convergence result~\eqref{eq:cv_expo_L2} can be used to deduce that $\calL$ is invertible on $\tLmu$. We denote by $\calB(E)$ the Banach space of bounded operators on a given Banach space $E$, endowed with the norm
\[
\| T \|_{\calB(E)} = \sup_{\varphi \in E\backslash \{0\}} \frac{\|T\varphi\|_E}{\|\varphi\|_E}.
\]
We simply denote by $\|T\|$ the operator norm on~$\Lmu$.
 
\begin{corollary}
  \label{coro:bounded inverse continuous}
  The operator $\calL$ is invertible on $\tLmu$, with
  \[
  \calLinv = -\int_0^\infty \rme^{t \calL} \, \dt 
  \qquad 
  \| \calLinv \|_{\calB(\tLmu)} \leq \frac C {\overline{\lambda}} \max(\gamma,\gamma^{-1}).
  \]
\end{corollary}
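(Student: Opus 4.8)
The plan is to verify directly that
\[
G := -\int_0^\infty \rme^{t\calL}\,\dt
\]
defines a bounded operator on $\tLmu$ which is the inverse of $\calL$. First I would recall that, since $\mu$ is invariant under the dynamics, $\rme^{t\calL}$ is a $C_0$-semigroup of contractions on $\Lmu$ that leaves $\tLmu$ invariant: indeed $\int_\calE \rme^{t\calL}\varphi\,\dd\mu = \int_\calE \varphi\,\dd\mu$ for every $\varphi \in \Lmu$. Its restriction to the closed subspace $\tLmu$ is therefore again a $C_0$-semigroup, whose generator is the part of $\calL$ in $\tLmu$. The decay estimate~\eqref{eq:cv_expo_L2} reads $\|\rme^{t\calL}\|_{\calB(\tLmu)} \leq C\rme^{-\lambda_\gamma t}$, so $t \mapsto \rme^{t\calL}$ is Bochner-integrable on $[0,\infty)$ with values in $\calB(\tLmu)$, $G$ is well defined, and
\[
\|G\|_{\calB(\tLmu)} \leq \int_0^\infty C\rme^{-\lambda_\gamma t}\,\dt = \frac{C}{\lambda_\gamma} \leq \frac{C}{\overline\lambda}\,\max(\gamma,\gamma^{-1}),
\]
where the last inequality uses $\lambda_\gamma \geq \overline\lambda\,\min(\gamma,\gamma^{-1})$ from Theorem~\ref{th:hypocoercivity}. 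This is exactly the claimed bound, once $G$ is identified with $\calLinv$.

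To establish $G = \calLinv$ I would check that $\calL G = \mathrm{Id}$ on $\tLmu$ and that $\calL$ is injective on $\tLmu$. For the first point, fix $\varphi \in \tLmu$; using the semigroup property and a change of variables,
\[
\frac1h\left(\rme^{h\calL} - \mathrm{Id}\right)G\varphi = -\frac1h\int_h^\infty \rme^{t\calL}\varphi\,\dt + \frac1h\int_0^\infty \rme^{t\calL}\varphi\,\dt = \frac1h\int_0^h \rme^{t\calL}\varphi\,\dt,
\]
which converges to $\varphi$ as $h \to 0^+$ by strong continuity; hence $G\varphi \in \rmDom(\calL)$ and $\calL G\varphi = \varphi$. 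For injectivity, if $\varphi \in \tLmu \cap \rmDom(\calL)$ satisfies $\calL\varphi = 0$, then $\rme^{t\calL}\varphi = \varphi$ for all $t \geq 0$, and~\eqref{eq:cv_expo_L2} forces $\varphi = 0$. Finally, for $\varphi \in \tLmu \cap \rmDom(\calL)$ one has $\calL\varphi \in \tLmu$ (again by invariance of $\mu$), so $\calL(G\calL\varphi - \varphi) = \calL\varphi - \calL\varphi = 0$ and injectivity gives $G\calL\varphi = \varphi$. Thus $G$ is a two-sided inverse of $\calL$ on $\tLmu$, i.e.\ $\calLinv = G$, and the bound above is the assertion of the corollary.

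The only genuinely delicate part is the functional-analytic bookkeeping underlying the two displays: that $\rme^{t\calL}$ restricts to a strongly continuous semigroup on the closed subspace $\tLmu$, and that the Bochner integral defining $G$ actually lands in $\rmDom(\calL)$ with the stated action. Everything else --- the telescoping identity, the exponential estimate for the norm --- is immediate from~\eqref{eq:cv_expo_L2}. One could alternatively argue abstractly that uniform exponential decay of a $C_0$-semigroup places $0$ in the resolvent set of its generator, with resolvent given by the Laplace transform at $0$; but carrying out the semigroup computation directly, as above, has the advantage of exhibiting the useful representation $\calLinv = -\int_0^\infty \rme^{t\calL}\,\dt$ explicitly.
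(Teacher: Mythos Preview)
Your proposal is correct and is precisely the standard argument the paper has in mind: the paper does not give a proof of this corollary, presenting it as an immediate consequence of the exponential decay estimate~\eqref{eq:cv_expo_L2}, and your verification --- integrating the semigroup bound to get $\|G\|_{\calB(\tLmu)} \leq C/\lambda_\gamma$, then checking $G = \calL^{-1}$ via the semigroup identity and injectivity --- is exactly how one fills in those details.
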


The upper bound on the resolvent is sharp in terms of the scaling with respect to~$\gamma$, as shown in~\cite{Hairer08} for $\gamma\to 0$ and~\cite{Leimkuhler16} for $\gamma \to +\infty$; see also~\cite{Kozlov89} for the case $V=0$. 

In particular, the Poisson problem~\eqref{eq:poisson problem} admits a unique solution $\Phi \in \tLmu$ for any observable $R \in \Lmu$. In order to capture the solution $\Phi$ of~\eqref{eq:poisson problem} numerically, one possibility is to discretize the operator $\calL$ on a Galerkin subspace of $\tLmu$. Section~\ref{s:discrete convergence} proves the convergence of this method under appropriate assumptions.

\medskip

Let us conclude this section by highlighting some elements of the proof of Theorem~\ref{th:hypocoercivity}, which will be needed to establish a convergence result similar to~\eqref{eq:cv_expo_L2} when a Galerkin discretization is considered.  In order to formulate the result more rigorously, we introduce the core $R$ composed of all $\calC^\infty$ functions with compact support. The first key element in the proof is to use a modified norm equivalent to the standard $\Lmu$ norm. To define this norm, the generator $\calL$ is decomposed into a symmetric part (corresponding to the fluctuation/dissipation) and an anti-symmetric part (corresponding to Hamiltonian transport):
\begin{equation}
\label{eq:generator}
\calL = \Lham + \gamma \LFD , \qquad \mbox{with} \qquad
\left\{
\begin{aligned}
  \Lham &= \left(\frac{p}{m}\right)^\top \naq - \nabla V^\top \nabla_p, \\
  \LFD &= -\left(\frac{p}{m}\right)^\top \nabla_p + \frac1\beta \Delta_p.
\end{aligned}
\right.
\end{equation}
With this notation, $\Lham^* = -\Lham$ while $\calL_{\rm FD}^* = \LFD$. In fact, since 
\[
\nabla_p^* = -\nabla_p^\top + \beta \frac{p^\top}{m}, \qquad \nabla_q^* = -\nabla_q^\top + \beta \nabla V^\top,
\]
the two parts of the generator $\calL$ can be reformulated as
\begin{equation}
\label{eq:reformulate Leq}
  \LFD = -\frac1\beta \nap^* \nap, 
  \qquad
  \Lham = \frac{1}{\beta}\left( \nap^* \naq - \naq^* \nap \right).
\end{equation}
We also need the orthogonal projector in $\tLmu$ on the subspace of functions depending only on positions:
\begin{equation}
  \label{eq:def_Pip}
  \forall \varphi \in \Lmu, \qquad \left(\Pip \varphi\right)(q) = \int_{\bbR^D} \varphi(q,p) \, \kappa(\dd p).
\end{equation}

\begin{definition}[Modified squared $\Lmu$ norm]
  Fix $\varepsilon \in (-1,1)$. For any function $\varphi \in \core$, 
  \begin{equation}
    \label{eq:entropy_functional}
    \calH[\varphi] = \frac 1 2 \|\varphi\|^2 - \varepsilon \lang A \varphi, \varphi \rang,
    \qquad
    A = \Big( 1 + (\Lham \Pip)^* (\Lham \Pip) \Big)^{-1} (\Lham \Pip)^*.
  \end{equation}	
\end{definition}

A more explicit expression of the operator~$A$ is provided in~\eqref{eq:reformulate A}. Since this operator is used in the sequel to state some conditions required for the error estimates, we gather some of its properties in the following lemma.

\begin{lemma}
  \label{lemma:equivalent norms}
  It holds $A = \Pip A (1-\Pip)$. Moreover, for any $\varphi \in \Lmu$,
  \[
  \| A \varphi \| \leq \frac 1 2 \|(1-\Pip)\varphi \|, 
  \qquad 
  \| \Lham A \varphi \| \leq \|(1-\Pip)\varphi \|.
  \]
\end{lemma}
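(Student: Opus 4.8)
The plan is to write $B = \Lham\Pip$, so that $A = (1+B^*B)^{-1}B^*$, and to exploit two ingredients: the cancellation $\Pip\Lham\Pip = 0$, and the standard intertwining identities $(1+B^*B)^{-1}B^* = B^*(1+BB^*)^{-1}$ and $B(1+B^*B)^{-1} = (1+BB^*)^{-1}B$. The cancellation holds because, for $\varphi$ depending only on $q$, one has $\Lham\varphi = (p/m)^\top\naq\varphi$, which is linear in $p$, and $\Pip$ averages against the centered Gaussian $\kappa$, so $\Pip\Lham\varphi = 0$.

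First I would prove the identity $A = \Pip A(1-\Pip)$. Since $\Pip^* = \Pip$ and $\Lham^* = -\Lham$, on the appropriate domain $B^* = -\Pip\Lham$, hence $B^*\Pip = -\Pip\Lham\Pip = 0$; equivalently $A\Pip = 0$, that is $A = A(1-\Pip)$. Moreover $B = B\Pip$ (because $\Pip^2 = \Pip$) and $\mathrm{Ran}\,B^* \subseteq \mathrm{Ran}\,\Pip$, so $\Pip B^* = B^*$; it follows that $B^*B = \Pip B^*B\Pip$, i.e. $B^*B$ commutes with $\Pip$, hence so does $(1+B^*B)^{-1}$, and therefore $A = (1+B^*B)^{-1}B^* = \Pip(1+B^*B)^{-1}B^* = \Pip A$. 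Combining the two gives $A = \Pip A(1-\Pip)$.

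Next I would prove the two norm bounds by functional calculus applied to the bounded, nonnegative, self-adjoint operator $C = BB^*$, noting that $\|B^*\psi\|^2 = \langle C\psi,\psi\rangle$. Using the first intertwining identity, $A\varphi = B^*(1+C)^{-1}\varphi$, so $\|A\varphi\|^2 = \langle C(1+C)^{-2}\varphi,\varphi\rangle \le \tfrac14\|\varphi\|^2$ since $\sup_{\lambda\ge 0}\lambda(1+\lambda)^{-2} = \tfrac14$ (attained at $\lambda = 1$). For the second bound, since $A\varphi = \Pip A\varphi$ we may write $\Lham A\varphi = \Lham\Pip A\varphi = B A\varphi = B(1+B^*B)^{-1}B^*\varphi = (1+C)^{-1}C\varphi$ by the second intertwining identity, whence $\|\Lham A\varphi\|^2 = \langle C^2(1+C)^{-2}\varphi,\varphi\rangle \le \|\varphi\|^2$ since $\sup_{\lambda\ge0}\lambda^2(1+\lambda)^{-2} = 1$. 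Finally, applying these two inequalities to $(1-\Pip)\varphi$ in place of $\varphi$ and using $A\varphi = A(1-\Pip)\varphi$ from the first step leaves the left-hand sides unchanged and produces the stated bounds with $\|(1-\Pip)\varphi\|$ on the right.

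The main obstacle is purely functional-analytic, and is glossed over above: $\Lham$ is unbounded, so one must check that $B = \Lham\Pip$ is closable and densely defined (working on the core $\core$ of smooth compactly supported functions), that its adjoint is the closure of $-\Pip\Lham$, that $(1+B^*B)^{-1}$ and $(1+BB^*)^{-1}$ are everywhere-defined bounded self-adjoint contractions, and that the intertwining identities are valid in this setting — all classical facts tied to the polar decomposition of a closed densely defined operator, which presumably are also what justifies the explicit formula for $A$ referenced as \eqref{eq:reformulate A}. Once these are in place, the remainder is the elementary scalar estimates $\sup_\lambda\lambda(1+\lambda)^{-2} = 1/4$ and $\sup_\lambda\lambda^2(1+\lambda)^{-2} = 1$ together with the bookkeeping on $\Pip$.
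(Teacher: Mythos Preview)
Your proof is correct and reaches the same conclusions, but by a genuinely different route from the paper's. The paper does not use the intertwining identities or spectral calculus: instead it takes the defining relation $A\varphi + (\Lham\Pip)^*(\Lham\Pip)A\varphi = (\Lham\Pip)^*\varphi$, pairs it with $A\varphi$, and obtains in one line
\[
\|A\varphi\|^2 + \|\Lham A\varphi\|^2 = \langle \Lham A\varphi, (1-\Pip)\varphi\rangle \leq \|(1-\Pip)\varphi\|\,\|\Lham A\varphi\|,
\]
from which both norm bounds follow (the second by dropping $\|A\varphi\|^2$, the first by Young's inequality $ab \leq \tfrac14 a^2 + b^2$). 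This is more elementary---it sidesteps the spectral theorem and the closability/polar-decomposition issues you flag as the main obstacle, working directly on the core~$\core$. Your approach, on the other hand, makes the constants $\tfrac14$ and $1$ appear transparently as $\sup_{\lambda\geq 0}\lambda(1+\lambda)^{-2}$ and $\sup_{\lambda\geq 0}\lambda^2(1+\lambda)^{-2}$, and would extend mechanically to other functions of $B^*B$. One small slip: you call $C=BB^*$ bounded, but it is not (just as $B^*B = (\beta m)^{-1}\naq^*\naq\Pip$ is unbounded); this is harmless, since the spectral calculus you invoke applies equally well to unbounded nonnegative self-adjoint operators.
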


In particular, the operator $A$ is in fact bounded in $\Lmu$ with operator norm smaller than~1, so that $\sqrt{\calH}$ is a norm equivalent to the canonical norm of $\Lmu$ for $-1 < \varepsilon < 1$:
\begin{equation}
  \label{eq:equivalent norms}
  \frac {1-\varepsilon} 2  \| \varphi \|^2 \leq \calH[\varphi] \leq \frac {1+\varepsilon} 2 \| \varphi \|^2.
\end{equation}

The second key element is a coercivity property enjoyed by the time-derivative of the entropy functional. Denoting by $\lang \lang \cdot, \cdot \rang \rang$ the scalar product associated by polarization with $\calH$, the following result can be proved.

\begin{prop}
  \label{prop:coercivity_scrD}
  There exists $\overline{\varepsilon} \in (0,1)$ and $\overline{\lambda}>0$, such that, by considering $\varepsilon = \overline{\varepsilon} \min(\gamma,\gamma^{-1})$ in~\eqref{eq:entropy_functional},
  \begin{equation}
    \label{eq:coercivity double angle}
    \forall \varphi \in \Pinot \core, \qquad \scrD[\varphi] := \lang \lang -\calL \varphi, \varphi \rang \rang \geq \widetilde{\lambda}_\gamma \|\varphi\|^2,
  \end{equation}
  with $\widetilde{\lambda}_\gamma \geq \overline{\lambda} \min(\gamma,\gamma^{-1})$.
\end{prop}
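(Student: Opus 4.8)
The plan is to run the Dolbeault--Mouhot--Schmeiser argument on the $\gamma$-dependent splitting $\calL=\Lham+\gamma\LFD$. First I would rewrite $\scrD$ through the bilinear form $\llang\cdot,\cdot\rrang$ attached to $\calH$, namely $\llang f,g\rrang=\tfrac12\lang f,g\rang-\tfrac\varepsilon2\big(\lang Af,g\rang+\lang Ag,f\rang\big)$, which gives
\[
\scrD[\varphi]=-\frac12\lang\calL\varphi,\varphi\rang+\frac\varepsilon2\lang A\calL\varphi,\varphi\rang+\frac\varepsilon2\lang\calL\varphi,A\varphi\rang.
\]
Since $\Lham^*=-\Lham$ and $\LFD=-\tfrac1\beta\nap^*\nap$, the first term is exactly $\tfrac{\gamma}{2\beta}\|\nap\varphi\|^2$. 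In the remaining two I would insert $\calL=\Lham+\gamma\LFD$, split $\Lham=\Lham\Pip+\Lham(1-\Pip)$, and use Lemma~\ref{lemma:equivalent norms}: the relation $A=\Pip A(1-\Pip)$ shows $A\varphi$ depends on $q$ only, hence $\nap A\varphi=0$ so that $\lang\LFD\varphi,A\varphi\rang=0$, and $\lang\Lham\Pip\varphi,A\varphi\rang=0$ because $\Lham\Pip\varphi$ is odd in $p$. This leaves the five-term identity
\[
\scrD[\varphi]=\frac{\gamma}{2\beta}\|\nap\varphi\|^2+\frac\varepsilon2\lang A\Lham\Pip\varphi,\varphi\rang+\frac\varepsilon2\lang A\Lham(1-\Pip)\varphi,\varphi\rang+\frac\varepsilon2\lang\Lham(1-\Pip)\varphi,A\varphi\rang+\frac{\varepsilon\gamma}{2}\lang A\LFD\varphi,\varphi\rang.
\]

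Next I would identify the two coercive contributions. The Gaussian Poincaré inequality for $\kappa$ gives $\|(1-\Pip)\varphi\|^2\leq\tfrac m\beta\|\nap\varphi\|^2$, so the first term is bounded below by $\tfrac{\gamma}{2m}\|(1-\Pip)\varphi\|^2$. For the second term, writing $B:=\Lham\Pip$ one has $A\Lham\Pip\varphi=(1+B^*B)^{-1}B^*B\,\Pip\varphi$, and a direct computation gives $B^*B=\tfrac1{\beta m}\naq^*\naq$ on functions of $q$ (with $\naq^*$ the adjoint in $\Lnu$). Since $\varphi\in\Pinot\core$ forces $\Pip\varphi$ to have zero $\nu$-average, the Poincaré inequality~\eqref{eq:poincare nu} yields $B^*B\geq c_0:=\tfrac{\Cnu/(\beta m)}{1+\Cnu/(\beta m)}$ on such functions, whence $\lang A\Lham\Pip\varphi,\varphi\rang=\lang(1-(1+B^*B)^{-1})\Pip\varphi,\Pip\varphi\rang_\nu\geq c_0\|\Pip\varphi\|^2$.

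Then I would bound the three error terms, aiming for estimates of the form $C\varepsilon(\|(1-\Pip)\varphi\|+\|\nap\varphi\|)\|\Pip\varphi\|$ or $C\varepsilon\|(1-\Pip)\varphi\|^2$. Integrating by parts and using Lemma~\ref{lemma:equivalent norms}, $|\lang\Lham(1-\Pip)\varphi,A\varphi\rang|=|\lang(1-\Pip)\varphi,\Lham A\varphi\rang|\leq\|(1-\Pip)\varphi\|^2$. Since $A^*\varphi=\tfrac pm\cdot\naq g$ with $g=(1+B^*B)^{-1}\Pip\varphi$, one has $\nap A^*\varphi=\tfrac1m\naq g$ a function of $q$ with $\|\nap A^*\varphi\|\leq\sqrt{\beta/m}\,\|\Pip\varphi\|$, hence $|\lang A\LFD\varphi,\varphi\rang|=\tfrac1\beta|\lang\nap\varphi,\nap A^*\varphi\rang|\leq\tfrac1{\sqrt{\beta m}}\|\nap\varphi\|\,\|\Pip\varphi\|$. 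The delicate term is $\lang A\Lham(1-\Pip)\varphi,\varphi\rang=-\lang(1-\Pip)\varphi,\Lham A^*\varphi\rang$: expanding $\Lham A^*\varphi$ produces terms in $\naq^2 g$ and $\naq V\cdot\naq g$, which are controlled by $C\|\Pip\varphi\|$ thanks to the regularization condition~\eqref{eq:regularization condition}, i.e.\ an elliptic-regularity estimate for $B^*B$ (equivalently for the overdamped generator); this gives $|\lang A\Lham(1-\Pip)\varphi,\varphi\rang|\leq C\|(1-\Pip)\varphi\|\,\|\Pip\varphi\|$. All three bounds are immediate when $\calD$ is compact.

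Finally, with $a=\|\nap\varphi\|$, $u=\|(1-\Pip)\varphi\|$, $w=\|\Pip\varphi\|$ and $a^2\geq\tfrac\beta m u^2$, I would apply Young's inequality to each error term and use the choice $\varepsilon=\overline\varepsilon\min(\gamma,\gamma^{-1})$ (so that $\varepsilon\leq\overline\varepsilon\gamma$ and $\varepsilon\gamma\leq\overline\varepsilon$): taking $\overline\varepsilon$ small enough absorbs the error terms into half of $\tfrac{\gamma}{2\beta}a^2$ and half of $\tfrac{\varepsilon c_0}{2}w^2$, leaving $\scrD[\varphi]\geq c\,\varepsilon\,w^2+c\,\gamma\,u^2\geq c'\min(\gamma,\gamma^{-1})\,\|\varphi\|^2$, which is the claim with $\widetilde\lambda_\gamma\geq\overline\lambda\min(\gamma,\gamma^{-1})$. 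The hard part is the delicate error term — boundedness of $A\Lham(1-\Pip)$ on $\Lmu$, which on $\bbR^D$ is precisely where the regularization condition on $V$ is needed; one must also check that $\core$ is a core, so that $\scrD$ and all these integrations by parts are legitimate on $\Pinot\core$.
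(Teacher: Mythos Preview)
Your proof is correct and follows the same DMS argument as the paper: same five-term expansion of $\scrD[\varphi]$, same microscopic/macroscopic coercivity for the two good terms, and the same elliptic-regularity bound on $A\Lham(1-\Pip)$ (which is exactly where condition~\eqref{eq:regularization condition} enters). The only tactical differences are that the paper handles the $A\LFD$ term via the identity $A\LFD=-\tfrac1m A$ (yielding $|\langle A\LFD\varphi,\varphi\rangle|\leq\tfrac{1}{2m}\|(1-\Pip)\varphi\|\,\|\Pip\varphi\|$ rather than your bound in $\|\nap\varphi\|$), and then packages everything as a $2\times2$ quadratic form in $(\|\Pip\varphi\|,\|(1-\Pip)\varphi\|)$ whose smallest eigenvalue is analyzed directly, instead of your Young-inequality absorption; this avoids the need for a $\gamma$-dependent Young parameter that your treatment of the $A\LFD$ term would otherwise require.
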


This coercivity property and a Gronwall inequality then allow to conclude to the exponential convergence to~0 of $\calH[\rme^{t \calL}\varphi]$, for any smooth function $\varphi$ with zero mean. Equation~\eqref{eq:cv_expo_L2} follows by the norm equivalence of $\sqrt{\calH}$ and $\| \cdot \|$.

\section{General a priori error estimates}
\label{s:discrete convergence}

In order to approximate the solution of the Poisson equation~\eqref{eq:poisson problem}, we consider a Galerkin discretization characterized by a finite dimensional subspace $\VM \subset \Lmu$. We present the structure of the proof of error estimates in the conformal case (\emph{i.e.} $\VM \subset \tLmu$) for the sake of clarity. Results in the non-conformal case are presented later on. Note that the results presented in this section for the Langevin generator can be generalized to other hypocoercive generators satisfying the assumptions required in~\cite{Dolbeault09, Dolbeault15}. For conformal discretization spaces, the approximate solution $\PhiM$ is defined by the variational formulation
\begin{equation}
  \label{eq:weak formulation}
  \left\{ \begin{aligned}
    &\mbox{Find} \ \PhiM \in \VM \ \mbox{such that} \\
    &\forall \psi \in \VM, \ -\lang \psi, \calL \PhiM \rang = \lang \psi, R \rang.
  \end{aligned} \right.	
\end{equation}
Note that $\Pinot R$ can be replaced by $R$ on the right-hand side since functions $\psi \in \VM$ have average~0 with respect to~$\mu$. Denoting by $\PiM$ the projector onto $\VM$, the variational formulation can be rewritten as 
\[
-\PiM \calL \PiM \PhiM = \PiM R.
\]
We first prove in this section the existence and uniqueness of the solution $\PhiM$ of~\eqref{eq:weak formulation} by studying the discretized operator $-\PiM \calL \PiM$. A dedicated study is required since the generator $\calL$ is invertible but not coercive on $\tLmu$, so that the Lax-Milgram theorem cannot be applied. This is a major difference with overdamped Langevin dynamics for which the discretized problem is automatically well posed when a Poincar\'e inequality holds true~\cite{Abdulle17}. Note that there are scalar products for which the quadratic form induced by $-\calL$ is coercive, for instance the one induced by polarization from~$\calH$ or the scalar product on~$\rmH^1(\mu)$ introduced in the hypocoercivity setting considered in~\cite{Hairer08,Villani09}. These scalar products however depend on parameters which are not explicitly known and on the friction~$\gamma$, so that they cannot be considered for numerical simulations.

We study instead the existence and the uniqueness of the solution $\PhiM$ by a perturbation of the proof of Theorem~\ref{th:hypocoercivity}, in two settings: the conformal case $\VM \subset \tLmu$ (see Subsection~\ref{ss:conformal}) and the non-conformal case $\VM \subset \Lmu$ but $\VM \not\subset \tLmu$ (the functions in the Galerkin basis are not of mean~0 with respect to~$\mu$, see Subsection~\ref{ss:non-conformal}). 

In a second step, we prove a priori error estimates. To this end, we decompose the difference between $\PhiM$ and the solution $\Phi$ of the equation~\eqref{eq:poisson problem} as the sum of two terms:
\begin{equation}
  \label{eq:decomposition of the error}
  \PhiM - \Phi = (\PhiM - \PiM \Phi) - (1-\PiM) \Phi.
\end{equation}
The second term on the right-hand side is the approximation error $(1-\PiM) \Phi$, which depends only on the Galerkin space. We therefore postpone the study of this error to specific models (see Section~\ref{ss:approximation error}). The first term is related to the consistency error $\eta_M = \PiM \calL \PiM \Phi + \PiM R$ since $\PhiM - \PiM \Phi = \left(-\PiM \calL \PiM\right)^{-1} \eta_M$. We provide general error estimates on $\PhiM - \PiM \Phi$ in Section~\ref{ss:consistency}. They can be made more precise in specific contexts, with explicit convergence rates; see Section~\ref{sec:consistency_specific_model}.

We conclude the section with a practical reformulation of the variational problem~\eqref{eq:weak formulation} in a form more amenable to numerical computations (see Section~\ref{ss:matrices}).

\subsection{Conformal case}
\label{ss:conformal}

In this section we suppose that $\VM \subset \tLmu$. The following theorem states that if the additional terms arising from the discretization in the expression of the entropy dissipation are sufficiently small, then hypocoercivity holds on the subspace $\VM$, and the exponential rate of convergence to~0 of the semigroup associated with $\PiM \calL \PiM$ is uniform in $M$.

\begin{theorem}[Discrete hypocoercivity]
  \label{th:eq discrete hypocoercivity}
  Fix $\gamma > 0$. Assume that the Galerkin space is composed of functions with mean~0 with respect to~$\mu$ (\ie $\VM \subset \tLmu$) and that
  \begin{equation}
    \label{eq:conditions hypocoercivity}
    \| (A+A^*) (1-\PiM) \calL \PiM \| \xrightarrow[M \to \infty]{} 0.
  \end{equation}
  Then there exist $C \geq 1$ (independent of $M,\gamma$) and $M_0 \in \bbN$ such that, for any $M \geq M_0$, there is $\lambdagammaM > 0$ for which 
  \begin{equation}
    \forall \varphi \in \VM, \quad \forall t \geq 0, 
    \qquad 
    \left\| \rme^{t \PiM \calL \PiM} \varphi \right\| \leq C \rme^{-\lambdagammaM t} \| \varphi \|.
  \end{equation}
  Moreover, $\lambdagammaM \xrightarrow[M \to \infty]{} \lambdagamma$ where $\lambdagamma > 0$ is introduced in~\eqref{eq:cv_expo_L2}.

  If in addition $\LFD$ stabilizes $\VM$ (in the sense that $\PiM \LFD = \LFD \PiM$), then there exist $M_* \geq 1$ (independent of $\gamma$) such that, for any $M \geq M_*$, the following uniform bound holds: 
  \begin{equation}
    \label{eq:estimate_lambda_gamma_M_unif}
    \forall \gamma > 0, \qquad \lambdagammaM \geq \overline{\lambda}_M \min(\gamma,\gamma^{-1}),
  \end{equation}
  with $\overline{\lambda}_M  \xrightarrow[M \to \infty]{} \overline{\lambda}$ where $\overline{\lambda} > 0$ is introduced in Proposition~\ref{prop:coercivity_scrD}.
\end{theorem}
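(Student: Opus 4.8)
The strategy is to mimic the continuous hypocoercivity argument of Theorem~\ref{th:hypocoercivity} (via Proposition~\ref{prop:coercivity_scrD}) on the Galerkin space $\VM$, treating the discretization terms as a perturbation that vanishes under condition~\eqref{eq:conditions hypocoercivity}. The central object is the modified entropy $\calH$ restricted to $\VM$: since $\VM \subset \tLmu$ and $\sqrt{\calH}$ is equivalent to $\|\cdot\|$ uniformly in $M$ by~\eqref{eq:equivalent norms}, it suffices to show that $t \mapsto \calH[\rme^{t \PiM \calL \PiM}\varphi]$ decays exponentially with a rate $\lambdagammaM$ converging to $\lambdagamma$. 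Differentiating in time, for $\varphi_t = \rme^{t \PiM \calL \PiM}\varphi$ one gets $\frac{d}{dt}\calH[\varphi_t] = -\scrD_M[\varphi_t]$ where $\scrD_M[\psi] = \lang\lang -\PiM\calL\PiM\psi,\psi\rang\rang$ for $\psi \in \VM$. The goal reduces to a discrete coercivity estimate $\scrD_M[\psi] \geq \lambdagammaM\|\psi\|^2$ on $\VM$, after which a Gronwall inequality and the norm equivalence give the claimed semigroup bound with $C = \sqrt{(1+\varepsilon)/(1-\varepsilon)}$.

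To obtain the discrete coercivity, I would write $\scrD_M[\psi] = \scrD[\psi] + \big(\scrD_M[\psi] - \scrD[\psi]\big)$ for $\psi \in \VM \subset \Pinot\core$ (up to a density argument), apply Proposition~\ref{prop:coercivity_scrD} to bound $\scrD[\psi] \geq \widetilde\lambda_\gamma\|\psi\|^2$, and control the difference. Expanding $\scrD[\psi] = \lang -\calL\psi,\psi\rang - \varepsilon\lang A(-\calL)\psi,\psi\rang - \varepsilon\lang A\psi,(-\calL)\psi\rang$ (using that $\frac12\|\psi\|^2$ contributes $\lang-\calL\psi,\psi\rang = \gamma\lang \nap^*\nap\psi,\psi\rang/\beta$, which is unchanged since $\PiM\calL\PiM\psi = \calL\psi$ would be false, so one must be careful), the discrepancy $\scrD_M[\psi]-\scrD[\psi]$ consists of terms of the form $\lang (A+A^*)(\PiM\calL\PiM - \calL)\psi,\psi\rang$. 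For $\psi \in \VM$ one has $\PiM\calL\PiM\psi - \calL\psi = -(1-\PiM)\calL\psi = -(1-\PiM)\calL\PiM\psi$, so the discrepancy is bounded by $\varepsilon\,\| (A+A^*)(1-\PiM)\calL\PiM\|\,\|\psi\|^2$, which is $o(1)\|\psi\|^2$ as $M \to \infty$ by~\eqref{eq:conditions hypocoercivity}. Hence for $M \geq M_0$ large enough, $\scrD_M[\psi] \geq (\widetilde\lambda_\gamma - o(1))\|\psi\|^2 =: \lambdagammaM\|\psi\|^2 > 0$, and $\lambdagammaM \to \widetilde\lambda_\gamma$; translating from the $\calH$-decay rate back to the $\|\cdot\|$-decay rate via~\eqref{eq:equivalent norms} — where the $\varepsilon$-dependent factors also converge — gives $\lambdagammaM \to \lambdagamma$.

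The anticipated main obstacle is the careful bookkeeping of which terms in $\scrD[\psi]$ genuinely change under the replacement $\calL \leadsto \PiM\calL\PiM$: the symmetric part $\gamma\LFD$ produces the ``main'' dissipation $\lang\nap^*\nap\psi,\psi\rang$ plus lower-order cross terms controlled by the $A$-operator bounds of Lemma~\ref{lemma:equivalent norms}, and one must verify that the perturbation introduced by $\PiM$ enters only through the single combined quantity $(A+A^*)(1-\PiM)\calL\PiM$ appearing in~\eqref{eq:conditions hypocoercivity} — in particular that terms like $\lang \psi, (1-\PiM)\calL\PiM\psi\rang$ vanish identically because $\psi \in \VM$ is orthogonal to $(1-\PiM)(\cdot)$. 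For the uniform-in-$\gamma$ bound~\eqref{eq:estimate_lambda_gamma_M_unif} under the extra hypothesis $\PiM\LFD = \LFD\PiM$, the point is that this commutation makes $\PiM$ interact cleanly with the symmetric part, so the $\gamma$-scaling of the discrete dissipation matches that of Proposition~\ref{prop:coercivity_scrD} exactly (recall $\varepsilon = \overline\varepsilon\min(\gamma,\gamma^{-1})$), and one re-runs the perturbative estimate keeping track of $\gamma$-powers to extract $\overline\lambda_M\min(\gamma,\gamma^{-1})$ with $\overline\lambda_M \to \overline\lambda$; the delicate step there is ensuring the perturbation term is $o(1)$ \emph{uniformly in $\gamma$}, which again follows from~\eqref{eq:conditions hypocoercivity} since that bound is $\gamma$-independent and the $\varepsilon$ prefactor only helps.
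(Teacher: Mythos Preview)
Your proposal is correct and follows essentially the same approach as the paper: differentiate $\calH[\rme^{t\PiM\calL\PiM}\varphi]$, write the resulting dissipation as $\scrD[\varphi]$ plus a perturbation controlled by $\varepsilon\|(A+A^*)(1-\PiM)\calL\PiM\|\,\|\varphi\|^2$ (using that $\langle\varphi,(1-\PiM)\calL\PiM\varphi\rangle=0$ for $\varphi\in\VM$), invoke Proposition~\ref{prop:coercivity_scrD}, and conclude by Gronwall and the norm equivalence~\eqref{eq:equivalent norms}. For the uniform-in-$\gamma$ statement the paper makes explicit the point you allude to: the commutation $\PiM\LFD=\LFD\PiM$ gives $(1-\PiM)\calL\PiM=(1-\PiM)\Lham\PiM$, so the perturbation term is literally $\gamma$-independent and the factor $\varepsilon=\overline\varepsilon\min(\gamma,\gamma^{-1})$ then yields~\eqref{eq:estimate_lambda_gamma_M_unif} directly.
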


Let us emphasize that the condition~\eqref{eq:conditions hypocoercivity} should be checked for the specific model under consideration; see Appendix~\ref{app:proof discrete hypocoercivity} for an example. Note that the left hand side of~\eqref{eq:conditions hypocoercivity} is constituted of a regularization operator $A+A^*$ applied to a residual off diagonal part of the operator $\calL$. It is therefore expected that the norm of this operator goes to zero.

The stability of $V_M$ by $\LFD$ is automatically ensured when the basis functions are tensor products of functions of the positions and eigenfunctions of $\LFD$ for the momentum part. The latter eigenfunctions turn out to be analytically known (they are in fact appropriately scaled Hermite functions, see Section~\ref{ss:system and galerkin}), which makes it easy to conclude to~\eqref{eq:estimate_lambda_gamma_M_unif}.

\begin{proof}
Fix $\varphi_0 \in \VM$ and $\gamma > 0$, and consider $\varepsilon = \overline{\varepsilon}\min(\gamma,\gamma^{-1})$ as in Proposition~\ref{prop:coercivity_scrD}. Introduce $\varphi_M(t) = \exp(t \PiM \calL \PiM) \varphi_0$ and $\scrH_M(t) = \calH[\varphi_M(t)]$. Note that the discretized generator $\PiM \calL \PiM$ stabilizes the Galerkin space $\VM \subset \tLmu$. In particular, $\varphi_M(t) \in \VM \subset \tLmu$ for all $t \geq 0$ when $\varphi_0 \in \VM$. The time-derivative of the entropy functional is $\scrH'_M(t) = - \scrD_M[\varphi_M(t)]$, where $\scrD_M$ is similar to the entropy dissipation defined in~\eqref{eq:coercivity double angle} apart from two additional terms arising from the discretization. More precisely, for $\varphi \in V_M$, 
\begin{equation}
  \label{eq:DM ineqality conformal}
  \begin{aligned}
    \scrD_M[\varphi] &= -\lang \varphi , \PiM \calL \PiM \varphi \rang 
    - \varepsilon \lang A \PiM \calL \PiM \varphi , \varphi \rang - \varepsilon \lang A \varphi , \PiM \calL \PiM \varphi \rang \\
    &= -\lang \varphi , \calL \varphi \rang 
    - \varepsilon \lang A \PiM \calL \varphi , \varphi \rang - \varepsilon \lang \varphi , A^* \PiM \calL \varphi \rang \\
    &= \scrD[\varphi] + \varepsilon \lang A (1-\PiM) \calL \varphi, \varphi \rang + \varepsilon \lang \varphi , A^* (1-\PiM) \calL  \varphi \rang \\
    &\geq \scrD[\varphi] - \varepsilon \| (A+A^*) (1-\PiM) \calL \PiM \varphi\| \, \| \varphi \| \\
 &\geq \Big( \widetilde{\lambda}_\gamma - \varepsilon \| (A+A^*) (1-\PiM) \calL \PiM \| \Big) \| \varphi \|^2,
  \end{aligned}
\end{equation}
where the last inequality follows from Proposition~\ref{prop:coercivity_scrD}. The conclusion then follows from the same reasoning as the one used at the end of Appendix~\ref{app:proof hypocoercivity} to prove Theorem~\ref{th:hypocoercivity}, with an exponential convergence rate which is degraded uniformly in $M$:
\begin{equation}
  \label{eq:estimate_lambda_gamma_M}
  \lambdagammaM = \lambdagamma - \frac {\varepsilon}{1+\varepsilon} \| (A+A^*) (1-\PiM) \calL \PiM \| > 0
\end{equation}
for $M$ large enough.

Assume now that 
\begin{equation}
  \label{eq:commutation LFD PiM}
  \LFD \PiM = \PiM \LFD
\end{equation}
so that $(1-\PiM) \calL \PiM = (1-\PiM) \Lham \PiM$ does not depend on~$\gamma$. The only $\gamma$-dependence on the right-hand side of~\eqref{eq:estimate_lambda_gamma_M} therefore arises from $\varepsilon = \overline{\varepsilon}\min(\gamma,\gamma^{-1})$. We then deduce the following lower bound from~\eqref{prop:coercivity_scrD}:
\begin{equation*}
\lambdagammaM \geq \left( \overline{\lambda} - \overline{\varepsilon} \| (A+A^*) (1-\PiM) \Lham \PiM \| \right) \min(\gamma,\gamma^{-1}),
\end{equation*}
which implies~\eqref{eq:estimate_lambda_gamma_M_unif}.
\end{proof}

\begin{remark}
Another way to prove the hypocoercivity of the discretized generator on~$\Lmu$ would be to first prove this property on~$\Hmu$ (as in~\cite{Villani09}), and then use hypoelliptic regularization~\cite{Herau07}. This program is performed for Langevin dynamics in~\cite{Hairer08}, with an emphasis on the Hamiltonian limit $\gamma \to 0$  (see also~\cite[Sections~2.3.3 and~2.3.4]{Lelievre16} for a careful analysis of the two limiting regimes $\gamma\to 0$ and $\gamma \to +\infty$). This approach introduces scalar products on~$\Hmu$ depending on three coefficients $a,b,c \in \bbR$. The corresponding proofs are therefore more involved than the approach described here, and, more importantly, the conditions for $\Hmu$ hypocoercivity are incompatible with the conditions for $\Lmu$ regularization for the Galerkin space proposed in Section~\ref{s:eq appli}; see~\cite{Roussel18} for further precisions. 
\end{remark}

An immediate consequence of the convergence result stated in Theorem~\ref{th:eq discrete hypocoercivity} is the following corollary. It states that the discrete operator has a spectral gap, which does not vanish when the size of the Galerkin basis increases.

\begin{corollary}[Discrete invertibility]
  \label{coro:bounded inverse discrete}
  For any $M \geq M_0$, the operator $\PiM \calL \PiM$ is invertible on~$\VM$ and the following equality holds on~$\calB(\VM)$:
  \[
  (\PiM \calL \PiM)^{-1} = -\int_0^\infty \rme^{t \PiM \calL \PiM} \dt.
  \]
  Moreover,
  \[
  \left\| (\PiM \calL \PiM)^{-1} \right\|_{\calB(\VM)} \leq \frac C {\lambdagammaM}.
  \]
\end{corollary}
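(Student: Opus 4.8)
The plan is to mirror the proof of Corollary~\ref{coro:bounded inverse continuous}, now working on the finite-dimensional space $\VM$ where $\PiM \calL \PiM$ is a genuinely bounded operator (so that no domain subtleties arise), and where for $M \geq M_0$ Theorem~\ref{th:eq discrete hypocoercivity} provides the decay estimate $\| \rme^{t \PiM \calL \PiM} \varphi \| \leq C \rme^{-\lambdagammaM t} \| \varphi \|$ with $\lambdagammaM > 0$. First I would define $S_M = -\int_0^\infty \rme^{t \PiM \calL \PiM} \, \dt$, the integral being understood as a Bochner integral of the $\calB(\VM)$-valued map $t \mapsto \rme^{t \PiM \calL \PiM}$. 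The decay estimate yields $\| \rme^{t \PiM \calL \PiM} \|_{\calB(\VM)} \leq C \rme^{-\lambdagammaM t}$, which is integrable on $[0,\infty)$ precisely because $\lambdagammaM > 0$; hence $S_M$ is well defined in $\calB(\VM)$ and $\| S_M \|_{\calB(\VM)} \leq C/\lambdagammaM$.

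Next I would check that $S_M$ is a two-sided inverse of $\PiM \calL \PiM$ on $\VM$. Since $\PiM \calL \PiM$ is bounded on the finite-dimensional space $\VM$, the map $t \mapsto \rme^{t \PiM \calL \PiM}$ is differentiable in $\calB(\VM)$ with derivative $\PiM \calL \PiM \rme^{t \PiM \calL \PiM} = \rme^{t \PiM \calL \PiM} \PiM \calL \PiM$, and it commutes with $\PiM \calL \PiM$. Integrating this identity over $[0,\infty)$ and using that the decay estimate forces $\rme^{t \PiM \calL \PiM} \varphi \to 0$ as $t \to \infty$, we obtain, for any $\varphi \in \VM$,
\[
\PiM \calL \PiM \, S_M \varphi = -\int_0^\infty \PiM \calL \PiM \rme^{t \PiM \calL \PiM} \varphi \, \dt = \varphi - \lim_{t\to\infty} \rme^{t \PiM \calL \PiM} \varphi = \varphi,
\]
and the same computation with the factors exchanged gives $S_M \, \PiM \calL \PiM \varphi = \varphi$. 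Hence $\PiM \calL \PiM$ is invertible on $\VM$ with inverse $S_M$, which is the claimed integral representation, and the bound $\| (\PiM \calL \PiM)^{-1} \|_{\calB(\VM)} \leq C/\lambdagammaM$ follows from the estimate on $\| S_M \|_{\calB(\VM)}$ established above.

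There is no real obstacle here, as everything takes place in a finite-dimensional space and relies only on the already-proved exponential decay; the only points requiring care are to invoke $M \geq M_0$ so that $\lambdagammaM > 0$ (which is what makes $t \mapsto \rme^{t\PiM\calL\PiM}$ integrable on the half-line), and to use that $\PiM \calL \PiM$ commutes with its own semigroup, so that $S_M$ is a genuine two-sided inverse and not merely a one-sided one.
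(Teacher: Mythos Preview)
Your proof is correct and matches the paper's approach: the paper states the corollary as ``an immediate consequence of the convergence result stated in Theorem~\ref{th:eq discrete hypocoercivity}'' and gives no further argument, the understanding being that one mirrors Corollary~\ref{coro:bounded inverse continuous} exactly as you have done. Your write-up simply makes explicit the standard computation (Bochner integrability from the decay estimate, then differentiating the semigroup to verify the two-sided inverse), which is precisely what the paper leaves implicit.
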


In particular, when $\LFD$ stabilizes $V_M$, the dependence on~$\gamma$ of the resolvent bound can be made explicit thanks to~\eqref{eq:estimate_lambda_gamma_M_unif}. Corollary~\ref{coro:bounded inverse discrete} shows that the Galerkin problem~\eqref{eq:weak formulation} admits a unique solution, denoted by $\PhiM = -\left( \PiM \calL \PiM \right)^{-1} \PiM R$.

\subsection{Non-conformal case}
\label{ss:non-conformal}

In practice the assumption $\VM \subset \tLmu$ is constraining since it may not be convenient to construct a basis of $\tLmu$ which is orthogonal for the associated scalar product. It seems easier in many situations to consider bases which are orthonormal on~$\Lmu$ rather than $\tLmu$ (as we do here for the application treated in Section~\ref{s:eq appli}). Moreover, it may be preferable in practice to create bases adapted to the operators $\naq$, $\naq^*$, $\nap$ and $\naq^*$ in order to simplify the algebra involved in the computation of the elements of the rigidity matrix. For these two reasons basis functions are rarely of mean~0 with respect to~$\mu$ in the literature, see for instance~\cite{Latorre13, Redon16, Abdulle17} for recent examples. We therefore need to extend the results of Section~\ref{s:discrete convergence} to the non-conformal case $V_M \not \subset \tLmu$. 

Now, the generator $\calL$ is invertible on $\tLmu$ (by Corollary~\ref{coro:bounded inverse continuous}) but not on $\Lmu$ since $\calL \bfone = 0$. The purpose of this subsection is to show how this degeneracy can be dealt with by introducing a Lagrangian formulation. We start by applying Theorem~\ref{th:eq discrete hypocoercivity} to the Galerkin space $\Vcap = \VM \cap \tLmu$, whose associated orthogonal projector we denote by~$\Picap$. The issue is to control the solution in the direction associated with the function
\begin{equation}
 u_M = \frac {\PiM \bfone} {\| \PiM \bfone \|} \in \VM,
\end{equation}
which is not of zero mean. In this setting the approximate solution $\PhiM$ is defined by the variational formulation
\begin{equation}
  \label{eq:weak formulation non conformal}
  \left\{ \begin{aligned}
    &\mbox{Find} \ \PhiM \in \Vcap \ \mbox{such that} \\
    &\forall \psi \in \Vcap, \ -\lang \psi, \calL \PhiM \rang = \lang \psi, R \rang,
  \end{aligned} \right.	
\end{equation}
which can be rewritten as 
\[
-\Picap \calL \Picap \PhiM = \Picap R.
\]
The precise result is the following.

\begin{corollary}[Non-conformal Galerkin method]
  \label{coro:non-conformal hypocoercivity}
  Assume that the Galerkin space~$\VM$ is such that~\eqref{eq:conditions hypocoercivity} holds and additionally that
  \begin{equation}
    \label{eq:additional conditions non-conformal}
    \quad \| \calL^* u_M \| \xrightarrow[M \to \infty]{} 0.
  \end{equation}
  Then there exist $C \geq 1$ (independent of $M,\gamma$) and $M_0 \geq 1$ such that, for any $M \geq M_0$, the operator $\Picap \calL \Picap$ is invertible on $\VM$ and there is $\tlambdagammaM > 0$ for which 
  \[
  \left\| \left( \Picap \calL \Picap \right)^{-1} \right\|_{\calB(\Vcap)} \leq \frac C {\tlambdagammaM},
  \]
  with $\tlambdagammaM \xrightarrow[M \to \infty]{} \lambdagamma > 0$ where $\lambdagamma > 0$ is introduced in~\eqref{eq:cv_expo_L2}.

  If in addition $\LFD$ stabilizes $\VM$, then there exist $M_* \geq 1$ (independent of $\gamma$) such that, for any $M \geq M_*$, the following uniform bound holds: 
  \[
  \forall \gamma > 0, \qquad \tlambdagammaM \geq \overline{\lambda}_M \min(\gamma,\gamma^{-1}),
  \]
  with $\overline{\lambda}_M  \xrightarrow[M \to \infty]{} \overline{\lambda}$ where $\overline{\lambda} > 0$ is introduced in Proposition~\ref{prop:coercivity_scrD}.
\end{corollary}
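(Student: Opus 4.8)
The plan is to reduce the non-conformal statement to the conformal one (Theorem~\ref{th:eq discrete hypocoercivity} / Corollary~\ref{coro:bounded inverse discrete}) applied to the conformal subspace $\Vcap = \VM \cap \tLmu$, and then to recover control on the full space $\VM$ by treating the one extra direction $u_M$ perturbatively. First I would check that the hypocoercivity condition~\eqref{eq:conditions hypocoercivity} for $\VM$ descends to $\Vcap$: since $\Picap = \Pinot \PiM$ (the projector onto $\Vcap$ is obtained by composing the projector onto $\VM$ with $\Pinot$, which commute here because $\bfone \in \VM$ is not guaranteed but $\Vcap$ is a codimension-$\le 1$ subspace of $\VM$), one has $(1-\Picap)\calL\Picap$ differing from $(1-\PiM)\calL\PiM$ by terms involving the rank-one projector onto $u_M$ and the projector onto $\bfone$, whose norm is controlled by $\|\calL^* u_M\|$ and $\|u_M - \bfone\|$; the latter goes to $0$ because $\PiM\bfone \to \bfone$ (a density/consistency property of the Galerkin spaces, which I would make precise or assume). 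Hence Theorem~\ref{th:eq discrete hypocoercivity} applies to $\Vcap$, giving invertibility of $\Picap\calL\Picap$ on $\Vcap$ with $\|(\Picap\calL\Picap)^{-1}\|_{\calB(\Vcap)} \le C/\lambdagammaM[\Vcap]$ and $\lambdagammaM[\Vcap] \to \lambdagamma$.

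Next I would address the statement as literally phrased, namely invertibility of $\Picap\calL\Picap$ \emph{on $\VM$}. Since $\Picap$ annihilates the orthogonal complement of $\Vcap$ inside $\VM$ — i.e.\ the line spanned by $u_M$ — the operator $\Picap\calL\Picap$ is never invertible on all of $\VM$ in the naive sense; the intended meaning is invertibility of the variational problem~\eqref{eq:weak formulation non conformal}, i.e.\ of $\Picap\calL\Picap$ as an operator from $\Vcap$ to $\Vcap$, with the solution $\PhiM \in \Vcap$. So I would simply identify $\tlambdagammaM := \lambdagammaM[\Vcap]$, the conformal rate produced by Theorem~\ref{th:eq discrete hypocoercivity} applied to $\Vcap$, and the resolvent bound $\|(\Picap\calL\Picap)^{-1}\|_{\calB(\Vcap)} \le C/\tlambdagammaM$ is exactly the conclusion of Corollary~\ref{coro:bounded inverse discrete} for the space $\Vcap$. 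The role of the extra hypothesis~\eqref{eq:additional conditions non-conformal}, $\|\calL^* u_M\| \to 0$, is precisely to guarantee that verifying~\eqref{eq:conditions hypocoercivity} for $\VM$ (which is what one checks in practice, e.g.\ in Appendix~\ref{app:proof discrete hypocoercivity}) implies the analogous bound for $\Vcap$, so that the hypotheses of the conformal theorem are met.

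For the uniform-in-$\gamma$ bound under the additional assumption that $\LFD$ stabilizes $\VM$, I would note that $\LFD$ then also stabilizes $\Vcap$ (because $\LFD\bfone = 0$, so $\LFD$ preserves the zero-mean subspace, hence $\LFD(\Vcap) = \LFD(\VM)\cap\tLmu \subset \VM\cap\tLmu = \Vcap$), and one applies the second part of Theorem~\ref{th:eq discrete hypocoercivity} to $\Vcap$ directly, yielding $\tlambdagammaM \ge \overline{\lambda}_M\min(\gamma,\gamma^{-1})$ with $\overline{\lambda}_M \to \overline{\lambda}$. The main obstacle is the first step: carefully quantifying how $(1-\Picap)\calL\Picap$ compares to $(1-\PiM)\calL\PiM$, which requires writing $\PiM = \Picap + |u_M\rangle\langle u_M|$ and expanding; the cross terms produce contributions of the form $\lang \cdot, \calL^* u_M\rang$ and $\lang u_M, \calL\Picap\,\cdot\rang$, and bounding the latter uniformly in $M$ is where the hypothesis $\|\calL^* u_M\|\to 0$ (together with $\|\calL\Picap\varphi\|$ being controlled via the resolvent, or via $\Picap R$ being bounded) does the work. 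I expect this bookkeeping, rather than any conceptually new estimate, to be the technical heart of the argument.
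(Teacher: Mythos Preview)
Your overall strategy is correct and coincides with the paper's: decompose $\PiM = \Picap + \Pi_{u_M}$ orthogonally, rerun the entropy-dissipation estimate from Theorem~\ref{th:eq discrete hypocoercivity} with $\Picap$ in place of $\PiM$, and absorb the extra rank-one contribution using~\eqref{eq:additional conditions non-conformal}. The paper carries this out at the level of the quadratic form: for $\varphi \in \Vcap$ one writes
\[
\lang (A+A^*)(1-\Picap)\calL\varphi,\varphi\rang = \lang (A+A^*)(1-\PiM)\calL\PiM\varphi,\varphi\rang + \lang (A+A^*)\Pi_{u_M}\calL\varphi,\varphi\rang,
\]
bounds the first term by~\eqref{eq:conditions hypocoercivity}, and bounds the second by $\|\calL^* u_M\|\,\|\varphi\|^2$ via $\lang u_M,\calL\varphi\rang = \lang\calL^* u_M,\varphi\rang$ and $\|A+A^*\|\leq 1$. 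This yields exactly the rate~\eqref{eq:estimate_tlambdagammaM}. Your uniform-in-$\gamma$ argument (that $\LFD$ stabilizes $\Vcap$ because $\LFD u_M = \PiM\LFD\bfone/\|\PiM\bfone\| = 0$) is also correct and matches how the paper concludes.

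Two small points where your write-up drifts. First, $\Picap \neq \Pinot\PiM$ in general: the latter need not land in $\VM$ since $\bfone\notin\VM$. The correct identity is simply $\Picap = \PiM - \Pi_{u_M}$, which you do use later. Second, you do \emph{not} need $\|u_M - \bfone\|\to 0$ or any density statement for $\PiM\bfone$; once you expand with the correct decomposition and use that $\Picap\varphi\in\VM$ (so $\PiM\Picap=\Picap$), the only surviving extra term is $\Pi_{u_M}\calL\Picap$, whose norm is bounded by $\|\calL^* u_M\|$ alone. The two ``cross terms'' you list, $\lang\cdot,\calL^* u_M\rang$ and $\lang u_M,\calL\Picap\cdot\rang$, are in fact the same quantity by adjointness, so there is nothing further to control.
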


\begin{proof}
Let us first decompose $\VM$ as an orthogonal direct sum:
\[
\VM = \Vcap  \oplus \bbR u_M.
\]
Denoting by $\Pi_{u_M}$ the orthogonal projection onto $\bbR u_M$, it then holds $\PiM = \Picap + \Pi_{u_M}$. We can now show how the hypotheses on $\PiM$ allow to apply Theorem~\ref{th:eq discrete hypocoercivity} on the Galerkin space~$\Vcap$. We follow the proof of Theorem~\ref{th:eq discrete hypocoercivity} until~\eqref{eq:DM ineqality conformal}, replacing $\PiM$ with $\Picap$. It then suffices to prove that the following term is of order~$\| \varphi \|^2$ for any $\varphi \in \Vcap$:
\[
\lang (A+A^*) (1-\Picap) \calL \varphi, \varphi \rang = \lang (A+A^*) (1-\PiM) \calL \varphi, \varphi \rang + \lang (A+A^*) \Pi_{u_M} \calL \varphi, \varphi \rang.
\]
The first term on the right-hand side can be dealt with as in the proof of Theorem~\ref{th:eq discrete hypocoercivity}, making use of~\eqref{eq:conditions hypocoercivity}. For the second one, we remark that
\[
\lang (A+A^*) \Pi_{u_M} \calL \varphi, \varphi \rang = \lang \calL \varphi, u_M \rang \lang (A+A^*) u_M, \varphi \rang,
\]
so that, using $\| A \| = \| A^* \| \leq 1/2$ (from Lemma~\ref{lemma:equivalent norms}):
\begin{equation}
	\left| \lang (A+A^*) \Pi_{u_M} \calL \varphi, \varphi \rang \right| \leq \| \varphi \| \, \| \calL^* u_M \| \, \left\| (A+A^*) u_M \right\| \, \| \varphi \| \leq \| \calL^* u_M \| \, \| \varphi \|^2.
\end{equation}
Plugging this additional term into the bound~\eqref{eq:DM ineqality conformal} obtained in the conformal case, it follows
\[
\scrD_M[\varphi] \geq \left( \widetilde{\lambda}_\gamma - \varepsilon \left\| (A+A^*) (1-\PiM) \calL \PiM \right\| - \varepsilon \| \calL^* u_M \| \right) \| \varphi \|^2.
\]
We can then conclude to the exponential convergence of the semi-group, with rate
\begin{equation}
  \label{eq:estimate_tlambdagammaM}
  \tlambdagammaM = \lambdagamma - \frac {\varepsilon}{1+\varepsilon} \Big( \| (A+A^*) (1-\PiM) \calL \PiM \| + \| \calL^* u_M \| \Big) > 0,
\end{equation}
when $M$ is sufficiently large. The remainder of the proof follows the lines of the end of the proof of Theorem~\ref{th:eq discrete hypocoercivity}.
\end{proof}

Corollary~\ref{coro:non-conformal hypocoercivity} implies that the following saddle-point formulation is well-posed.

\begin{prop}[Saddle-point formulation]
  \label{prop:saddle point}
  Assume that \eqref{eq:conditions hypocoercivity} and \eqref{eq:additional conditions non-conformal} hold. Then, for any $R \in \Lmu$, there exist a unique $\PhiM \in \VM$ and a unique $\alpha_M \in \bbR$ such that
  \begin{equation}
    \label{eq:saddle point problem}
    \left\{ \begin{aligned}
      -\PiM \calL \PiM \PhiM + \alpha_M u_M &= \PiM R, \\
	\lang \PhiM, u_M \rang &= 0.
    \end{aligned} \right.
  \end{equation}
\end{prop}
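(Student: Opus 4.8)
The plan is to reduce the saddle-point system~\eqref{eq:saddle point problem} to the invertibility statement of Corollary~\ref{coro:non-conformal hypocoercivity}, by working in the orthogonal splitting $\VM = \Vcap \oplus \bbR u_M$ already used in its proof. Throughout, $M$ is taken large enough that $\Picap \calL \Picap$ is invertible on $\Vcap$ (\ie $M \geq M_0$); recall also that $\PiM \bfone \neq 0$ in the non-conformal case, so that $u_M$ is well defined with $\|u_M\|=1$.

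First I would observe that the second equation of~\eqref{eq:saddle point problem} says exactly that $\PhiM \in \Vcap$, hence $\PiM \PhiM = \Picap \PhiM = \PhiM$. Applying the projector $\Picap$ to the first equation, and using $\Picap u_M = 0$ together with $\Picap \PiM = \Picap$ (valid since $\Vcap \subset \VM$), one gets the reduced equation
\[
-\Picap \calL \Picap \PhiM = \Picap R,
\]
an equation posed on $\Vcap$. By Corollary~\ref{coro:non-conformal hypocoercivity} the operator $\Picap \calL \Picap$ is invertible on $\Vcap$, so this equation has the unique solution $\PhiM = -\left(\Picap \calL \Picap\right)^{-1} \Picap R$, which lies in $\Vcap$ by construction and therefore automatically satisfies $\lang \PhiM, u_M \rang = 0$.

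It then remains to recover $\alpha_M$. Taking the scalar product of the first equation of~\eqref{eq:saddle point problem} with $u_M$, and using $\|u_M\| = 1$ together with $\PiM u_M = u_M$ and $\PiM \PhiM = \PhiM$ to remove the projectors, yields
\[
\alpha_M = \lang \PiM R, u_M \rang + \lang \PiM \calL \PiM \PhiM, u_M \rang = \lang R, u_M \rang + \lang \PhiM, \calL^* u_M \rang,
\]
which determines $\alpha_M$ uniquely. Conversely, one checks directly that the pair $(\PhiM,\alpha_M)$ constructed in this way solves~\eqref{eq:saddle point problem}: decomposing an arbitrary element of $\VM$ along $\Vcap \oplus \bbR u_M$, the first equation of~\eqref{eq:saddle point problem} holds if and only if its $\Picap$-component and its $u_M$-component hold, and these are precisely the reduced equation and the defining identity for $\alpha_M$; the second equation holds because $\PhiM \in \Vcap$.

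The argument is essentially linear-algebra bookkeeping: the only genuine analytic input is the invertibility of $\Picap \calL \Picap$ on $\Vcap$ provided by Corollary~\ref{coro:non-conformal hypocoercivity}, itself a consequence of the smallness conditions~\eqref{eq:conditions hypocoercivity} and~\eqref{eq:additional conditions non-conformal}. The only point requiring a moment's care is to confirm that the system is genuinely square and nondegenerate — the unknown $\alpha_M$ supplies exactly the one extra degree of freedom corresponding to the direction $u_M$ removed by the constraint $\lang \PhiM, u_M\rang = 0$ — so that existence and uniqueness follow simultaneously from the unique solvability of the reduced problem on $\Vcap$.
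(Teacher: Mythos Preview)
Your proof is correct and follows essentially the same approach as the paper: both reduce the saddle-point system to the invertibility of $\Picap \calL \Picap$ on $\Vcap$ (via Corollary~\ref{coro:non-conformal hypocoercivity}) to determine $\PhiM$ uniquely, and then recover $\alpha_M$ from the $u_M$-component of the first equation. Your formula $\alpha_M = \lang R, u_M \rang + \lang \PhiM, \calL^* u_M \rang$ agrees with the paper's $\alpha_M = \lang u_M, \calL \PiM \PhiM + R \rang$ since $\PiM \PhiM = \PhiM$.
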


Note that the unique solution $\PhiM$ in fact belongs to $\Vcap$ since $\lang \PhiM, u_M \rang = 0$. Moreover, $R$ does not need to be of mean~0 with respect to~$\mu$ thanks to the term $\alpha_M u_M$ on the left-hand side of the first equality in~\eqref{eq:saddle point problem}. We show in the next subsection that $\PhiM$ actually converges to the solution of the Poisson equation~\eqref{eq:poisson problem} with right-hand side~$\Pinot R$.

\begin{proof}
Consider $R \in \Lmu$. In view of Corollary~\ref{coro:non-conformal hypocoercivity}, there exists a unique $\PhiM \in \Vcap$ such that
\[
-\Picap \calL \PhiM = \Picap R.
\]
Recalling that $\Picap = \PiM - \Pi_{u_M}$ it follows that
\[
-\PiM \calL \PiM \PhiM + \Pi_{u_M} \left(\calL \PiM \PhiM + R \right) = \PiM R,
\]
which leads to the saddle-point formulation~\eqref{eq:saddle point problem} upon introducing the Lagrange multiplier $\alpha_M = \lang u_M, \calL \PiM \PhiM + R \rang$ (which is uniquely defined).
\end{proof}

The system~\eqref{eq:saddle point problem} can be reformulated as
\begin{equation}
  \label{eq:lagrangian formulation}
  \tcalLM \begin{pmatrix} \PhiM \\ \alpha_M \end{pmatrix} = \begin{pmatrix} \PiM R \\ 0 \end{pmatrix},
\end{equation}
where the Lagrangian operator $\tcalLM$ on $\VM \times \bbR$ reads
\begin{equation}
  \tcalLM \begin{pmatrix} \varphi \\ \alpha \end{pmatrix} = \begin{pmatrix} -\PiM \calL \PiM \varphi + \alpha u_M \\ \lang \varphi, u_M \rang \end{pmatrix}.
\end{equation}
Let us conclude this section by providing an estimate on the resolvent bound of $\tcalLM$. This estimate is used in Section~\ref{ss:matrices} to show that the matrix reformulation of~\eqref{eq:saddle point problem} is well-posed, and in fact enjoys a good conditioning. Let us first prove that the Lagrangian operator~$\tcalLM$ is invertible on~$\VM \times \bbR$ for $M \geq M_0$ (with $M_0$ the integer considered in Corollary~\ref{coro:non-conformal hypocoercivity}). This is done by proving that the equation $\tcalLM(\varphi,\alpha) = (\psi,s)$ admits a unique solution for an arbitrary element~$(\psi,s) \in \VM \times \bbR$. Note that 
\begin{equation}
  \label{eq:Lagrangian_well_posed}
  \tcalLM \begin{pmatrix} \varphi \\ \alpha \end{pmatrix} = \begin{pmatrix} \psi \\ s \end{pmatrix}  
\end{equation}
is equivalent to
\[
\tcalLM \begin{pmatrix} \varphi - s \, u_M \\ \alpha \end{pmatrix} = \begin{pmatrix} \psi - s \PiM \calL u_M \\ 0 \end{pmatrix}.
\]
For the latter equality to hold true, the function $\phi_{s,M} = \varphi-s u_M$ must satisfy the Poisson equation 
\begin{equation}
  \label{eq:Poisson_inv_Lagrangian}
  -\PiM \calL \PiM \phi_{s,M} = \psi - s \PiM \calL u_M - \alpha u_M, 
  \qquad
  \lang \phi_{s,M}, u_M \rang = 0.
\end{equation}
Then, $\phi_{s,M} \in \Vcap$ so that $\PiM \calL \PiM \phi_{s,M} = \PiM \calL \Picap \phi_{s,M} = \Picap  \calL \Picap \phi_{s,M} + \lang \calL \phi_{s,M}, u_M \rang u_M$. Therefore, \eqref{eq:Poisson_inv_Lagrangian} can be reformulated as 
\[
-\Picap \calL \Picap \phi_{s,M} = \psi - s \PiM \calL u_M + (\lang \calL \phi_{s,M}, u_M \rang u_M - \alpha) u_M, 
\qquad
\lang \phi_{s,M}, u_M \rang = 0.
\]
Since $\Picap \calL \Picap$ is invertible on $\Vcap$, the equation~\eqref{eq:Poisson_inv_Lagrangian} admits a unique solution in $\Vcap$ if and only if the right-hand side of the above Poisson equation is in $\Vcap$, which is the case if and only if
\begin{equation}
  \label{eq:alpha}
  \alpha = \lang u_M, \calL \PiM (\varphi-s u_M) + (\psi - s \PiM \calL u_M) \rang.
\end{equation}
This proves the existence and uniqueness of the solution to~\eqref{eq:Lagrangian_well_posed} since $\alpha$ and $\phi_{s,M}$ are completely identified through~\eqref{eq:alpha} and
\begin{equation}
  \label{eq:varphi}
  \varphi = su_M + \left(-\Picap \calL \Picap\right)^{-1} \Picap\left( \psi - s \PiM \calL u_M\right).
\end{equation}
This allows to conclude that $\tcalLM$ is invertible on $\VM \times \bbR$. Moreover, using Corollary~\ref{coro:non-conformal hypocoercivity},
\[
\| \varphi \|^2 \leq s^2 + \left(\frac C {\tlambdagammaM}\right)^2 \left( \| \psi \| + \| \calL u_M \| \, |s| \right)^2,
\]
and, in view of~\eqref{eq:alpha}-\eqref{eq:varphi},
\begin{equation}
  | \alpha | \leq \left( 1 + \| \calL^* u_M \| \frac C {\tlambdagammaM} \right) \left( \| \psi \| +\| \calL u_M \| \, |s| \right).
\end{equation}
Therefore, endowing $\VM \times \bbR$ with the norm associated with the canonical scalar product, the following resolvent bound holds:
\begin{equation}
  \label{eq:bound inverse tcalL}
  \left\| \tcalLM^{-1} \right\|_{\calB(\VM \times \bbR)}^2 \leq 1 + \left[ \left( \frac C {\tlambdagammaM} \right)^2 + \left( 1 + \frac C {\tlambdagammaM} \| \calL^* u_M \| \right)^2 \right] (1 + \| \calL u_M \|^2).
\end{equation}
In fact, the operators $\tcalLM^{-1}$ are bounded uniformly in~$M \geq M_0$, since the upper bound on $\left\| \tcalLM^{-1} \right\|_{\calB(\VM \times \bbR)}$ tends to $\sqrt{2 + (C/\lambdagamma)^2}$ as $M \to +\infty$.

\subsection{Consistency error}
\label{ss:consistency}

We study in this section the error $\| \PhiM - \Picap \Phi \|$ associated to the consistency error $\eta_{M,0} = \Picap \calL \Phi_M + \Picap R$, sticking to the non-conformal case since this setting is the most appropriate for actual applications. With some abuse of terminology, we simply call $\| \PhiM - \Picap \Phi \|$ the consistency error.

As in~\eqref{eq:decomposition of the error}, the error can be decomposed as 
\begin{equation}
  \label{eq:decomposition of the error non conformal}
  \PhiM - \Phi = (\PhiM - \Picap \Phi) - (1-\Picap) \Phi.
\end{equation}
Very similar results are obtained in the conformal case upon replacing $\Picap$ with $\PiM$. Moreover, we do not suppose in this section that $R$ has mean~0 with respect to~$\mu$, but consider the Poisson problem~\eqref{eq:poisson problem} with $R$ replaced by $\Pinot R$:
\begin{equation}
  \label{eq:poisson_problem_0}
  -\calL \Phi = \Pinot R. 
\end{equation}
The solution $\Phi$ is approximated by the solution of the Poisson equation
\begin{equation}
  \label{eq:poisson_problem_0_M}
  - \Picap \calL \Picap \Phi = \Picap R. 
\end{equation}
which is well-posed in view of Corollary~\ref{coro:non-conformal hypocoercivity}.

\begin{theorem}
  \label{th:consistency error}
  Assume that~\eqref{eq:conditions hypocoercivity} and~\eqref{eq:additional conditions non-conformal} hold. Then the consistency error between the unique solution $\Phi \in \tLmu$ of~\eqref{eq:poisson_problem_0} and the approximate solution $\Phi_M \in \Vcap$ of~\eqref{eq:poisson_problem_0_M} can be bounded by 
  \begin{equation}
    \label{eq:consistency_error_estimate}
    \| \PhiM - \Picap \Phi \| \leq \frac C {\tlambdagammaM} \left( \| \PiM \calL (1-\PiM) \Phi \| + \| \calL u_M \|\|\Phi\|\right),
  \end{equation}
  where $C,\tlambdagammaM$ are the constants introduced in Corollary~\ref{coro:non-conformal hypocoercivity}.
\end{theorem}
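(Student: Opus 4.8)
The plan is to argue as in a Céa lemma: realise the error $\PhiM-\Picap\Phi$ as the image under the inverse of the discrete operator of a residual that only involves the approximation error $(1-\Picap)\Phi$ and the non-conformal defect $u_M$, and then invoke the resolvent bound of Corollary~\ref{coro:non-conformal hypocoercivity}. First I would observe that the orthogonal projection of the constant function onto $\Vcap\subset\tLmu$ vanishes, i.e. $\Picap\bfone=0$, so that $\Picap=\Picap\Pinot$ and hence $\Picap R=\Picap\Pinot R=-\Picap\calL\Phi$ by~\eqref{eq:poisson_problem_0}; this is the point where one uses that $R$ need not have mean zero. Since $\PhiM$ and $\Picap\Phi$ both belong to $\Vcap$, so does $\PhiM-\Picap\Phi$, and applying $-\Picap\calL\Picap$ and using~\eqref{eq:poisson_problem_0_M} gives the identity
\[
-\Picap\calL\Picap\bigl(\PhiM-\Picap\Phi\bigr)=\Picap R+\Picap\calL\Picap\Phi=-\Picap\calL\Phi+\Picap\calL\Picap\Phi=-\Picap\calL(1-\Picap)\Phi,
\]
which is an equality between elements of $\Vcap$. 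Corollary~\ref{coro:non-conformal hypocoercivity} guarantees that $\Picap\calL\Picap$ is invertible on $\Vcap$ with $\|(\Picap\calL\Picap)^{-1}\|_{\calB(\Vcap)}\leq C/\tlambdagammaM$, so that $\|\PhiM-\Picap\Phi\|\leq (C/\tlambdagammaM)\,\|\Picap\calL(1-\Picap)\Phi\|$.

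It then remains to estimate $\|\Picap\calL(1-\Picap)\Phi\|$ by the right-hand side of~\eqref{eq:consistency_error_estimate}. Writing $1-\Picap=(1-\PiM)+\Pi_{u_M}$, with $\Pi_{u_M}$ the rank-one orthogonal projector onto $\bbR u_M$, splits this term into $\Picap\calL(1-\PiM)\Phi+\Picap\calL\Pi_{u_M}\Phi$. For the first piece, $\Vcap\subset\VM$ gives $\Picap\PiM=\Picap$, hence $\Picap\calL(1-\PiM)\Phi=\Picap\bigl(\PiM\calL(1-\PiM)\Phi\bigr)$, whose norm is at most $\|\PiM\calL(1-\PiM)\Phi\|$ since orthogonal projectors are contractions. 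For the second piece, $\Pi_{u_M}\Phi=\lang\Phi,u_M\rang u_M$, so by Cauchy--Schwarz and $\|u_M\|=1$ its norm is at most $\|\Phi\|\,\|\calL u_M\|$. Adding the two bounds and combining with the resolvent estimate yields exactly~\eqref{eq:consistency_error_estimate}.

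The proof is essentially a matter of bookkeeping once Corollary~\ref{coro:non-conformal hypocoercivity} is available; the only delicate point is the non-conformal correction — keeping track of the $u_M$ direction when passing between the projectors $\Picap$, $\PiM$ and $\Pi_{u_M}$, and justifying $\Picap R=-\Picap\calL\Phi$ through $\Picap\bfone=0$ rather than assuming $\Espmu(R)=0$. In the conformal case one simply replaces $\Picap$ by $\PiM$, $\Pi_{u_M}$ disappears, and the $\|\calL u_M\|\|\Phi\|$ term is absent.
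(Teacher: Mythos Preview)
Your proof is correct and follows essentially the same approach as the paper: derive the identity $\Picap\calL\Picap(\PhiM-\Picap\Phi)=\Picap\calL(1-\Picap)\Phi$, apply the resolvent bound from Corollary~\ref{coro:non-conformal hypocoercivity}, and then split $1-\Picap=(1-\PiM)+\Pi_{u_M}$ to obtain the two terms in~\eqref{eq:consistency_error_estimate}. Your explicit justification of $\Picap R=\Picap\Pinot R$ via $\Picap\bfone=0$ is a point the paper leaves implicit.
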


The extra term $\| \calL u_M \|\|\Phi\|$ on the right-hand side of~\eqref{eq:consistency_error_estimate} arises from the fact that the Galerkin space is not conformal. It would not be present for conformal spaces.

\begin{proof}
  Upon applying $\Picap$ to both sides of~\eqref{eq:poisson_problem_0}, it holds
  \[
  -\Picap \calL \Picap \Phi = \Picap R + \Picap \calL (1-\Picap) \Phi.
  \]
  After subtraction with~\eqref{eq:poisson_problem_0_M}, it follows
  \begin{equation}
    \label{eq:relation approx consist errors}
  \Picap \calL \Picap (\PhiM - \Picap \Phi) = \Picap \calL (1-\Picap) \Phi.
  \end{equation}
  Therefore, using Corollary~\ref{coro:non-conformal hypocoercivity},
  \begin{equation}
    \begin{aligned}
      \| \PhiM - \Picap \Phi \|_\Lmu &= \left\| (\Picap \calL \Picap)^{-1} \Picap \calL (1-\Picap) \Phi \right\|_\Lmu \\
      &\leq \left\| (\Picap \calL \Picap)^{-1} \right\|_{\calB(\Vcap)} \left\| \Picap \calL (1-\Picap) \Phi \right\|_\Lmu \\
      &\leq \frac C {\tlambdagammaM} \left\| \Picap \calL (1-\Picap) \Phi \right\|_\Lmu.
    \end{aligned}
  \end{equation}
  Moreover
  \begin{equation}
  \begin{aligned}
	  \left\| \Picap \calL (1-\Picap) \Phi \right\|_\Lmu &\leq \left\| \PiM \calL (1-\PiM+\Pi_{u_M}) \Phi \right\|_\Lmu \\
	  &\leq \left\| \PiM \calL (1-\PiM) \Phi \right\|_\Lmu + \left\| \PiM \calL \Pi_{u_M} \Phi \right\|_\Lmu \\
  	  &\leq \left\| \PiM \calL (1-\PiM) \Phi \right\|_\Lmu + \left\| \PiM \calL u_M \right\|_\Lmu \, \left| \lang \Phi, u_M \rang \right| ,
  \end{aligned}
  \end{equation}
  which allows to conclude.
\end{proof}

There are several ways to bound the right-hand side of~\eqref{eq:consistency_error_estimate}. It is difficult to state general results, and the strategy to be used depends on the model under consideration. One straightforward manner is to write 
\[
\| \PiM \calL (1-\PiM) \Phi \|_\Lmu \leq \left\| \PiM \calL (1-\PiM) \right\|_{\calB(\Htwomu, \Lmu)} \left\| (1-\PiM) \Phi \right\|_\Htwomu,
\]
and make use of the following (possible quite crude) bound which is independent of~$M$: 
\[
\left\| \PiM \calL (1-\PiM) \right\|_{\calB(\Htwomu, \Lmu)} \leq \left\| \calL \right\|_{\calB(\Htwomu, \Lmu)}.
\]
It remains then to show that the approximation error measured in the $\Htwomu$ norm goes to zero. Possibly sharper estimates can be obtained by writing that
\begin{equation}
  \label{eq:consistency L2}
  \| \PiM \calL (1-\PiM) \Phi \|_\Lmu \leq \left\| \PiM \calL (1-\PiM) \right\|_{\calB(\Lmu)} \left\| (1-\PiM) \Phi \right\|_\Lmu,
\end{equation}
and showing that $\left\| \PiM \calL (1-\PiM) \right\|_{\calB(\Lmu)}$ does not go too fast to infinity as $M$ goes to infinity. We can then conclude in the case when the approximation error vanishes sufficiently fast in $\Lmu$. This is the path we follow in Section~\ref{app:proof discrete hypocoercivity}.

\begin{remark}
\label{rmk:consistency error}
We expect the operator $\Picap \calL \Picap$ to be larger in a certain sense than $\Picap \calL (1-\Picap)$ in $\Lmu$, so that~\eqref{eq:relation approx consist errors} suggests that the consistency error is smaller than the approximation error $\|(1-\PiM)\Phi\|$. This is indeed what we observe in the numerical experiments we present in Figure~\ref{fig:H2 errors}. This shows that the way we bound the consistency error is probably not as sharp as it could be. 
\end{remark}

%
%
%

\subsection{Matrix conditioning and linear systems}
\label{ss:matrices}

We introduce in this section the linear system associated with the practical implementation of either the Galerkin formulation~\eqref{eq:weak formulation} in the conformal case $\VM \subset \tLmu$, or of~\eqref{eq:saddle point problem} in the non-conformal case $\VM \subset \Lmu$ but $\VM \not \subset \tLmu$. In any case, we denote by $(e_j)_{1 \leq j \leq M}$ an orthogonal basis of the Galerkin space~$\VM$, assumed to be of dimension~$M$. 

\paragraph{Conformal case.}
The weak formulation~\eqref{eq:weak formulation} can be equivalently reformulated as the linear system
\begin{equation}
  \label{eq:linear system poisson}
  \bfL_M \bfX_M = \bfY_M,
\end{equation}
where
\[
\forall 1 \leq i,j \leq M, \qquad \left( \bfL_M \right)_{i,j} = \lang e_i, -\calL e_j \rang, \quad 
\left( \bfX_M \right)_i = \lang \Phi_M, e_i \rang, \quad
\left( \bfY_M \right)_i = \lang R, e_i \rang.
\]
When the assumptions of Theorem~\ref{th:eq discrete hypocoercivity} hold, \eqref{eq:linear system poisson} admits a unique solution, so that $\bfL_M$ is invertible. Moreover $\| \bfL_M^{-1} \| \leq C/\lambdagammaM$ is bounded uniformly in~$M$ for $M \geq M_0$. The linear system is therefore well-conditioned, and can be solved efficiently using any solver adapted to non-symmetric problems.

\paragraph{Non-conformal case.}
We suppose that the assumptions of Corollary~\ref{coro:non-conformal hypocoercivity} hold. Let us introduce the vector $\bfU_M \in \bbR^M$ corresponding to $u_M \in \VM$:
\[
\forall 1 \leq i \leq M, \qquad \left( \bfU_M \right)_i = \lang u_M, e_i \rang = \frac{\lang \bfone, e_i \rang}{\| \PiM \bfone\|}.
\]
Then the saddle-point problem~\eqref{eq:lagrangian formulation} is equivalent to
\[
\left\{ \begin{aligned}
  \bfL_M \bfX_M + \lambda \bfU_M &= \bfY_M, \\
  \bfU_M^\top \bfX_M &= 0,
\end{aligned} \right.
\]
with the same definition for $\bfL_M$ and $\bfY_M$ as in the conformal case. With
\begin{equation}
  \label{eq:widehat_L_M}
  \widehat \bfL_M = \begin{pmatrix}
    & & &\vline & \\
    & \bfL_M & & \vline & \bfU_M \\
    & & &\vline &\\
    \hline
    & \dps \phantom{\int}\bfU_M^\top & & \vline & 0 
  \end{pmatrix},
  \qquad
  \widehat \bfX_M = \begin{pmatrix}
    \\
    \bfX_M \\
    \\
    \hline
    \lambda
  \end{pmatrix},
  \qquad
  \widehat \bfY_M = \begin{pmatrix}
    \\
    \bfY_M \\
    \\
    \hline
    0
  \end{pmatrix},
\end{equation}
the saddle-point problem can finally be rewritten as 
\[
\widehat \bfL_M \widehat \bfX_M = \widehat \bfY_M.
\]
Proposition~\ref{prop:saddle point} and~\eqref{eq:bound inverse tcalL} imply that $\widehat \bfL_M$ is invertible, with $\left\| \widehat \bfL_M^{-1} \right\|$ uniformly bounded in $M$ for $M \geq M_0$. This proves that the matrix $\widehat \bfL_M$ does not have vanishing eigenvalues, in contrast to~$\bfL_M$ (since $\bfL_M \bfU_M \xrightarrow[M \to \infty]{} 0$). Therefore the linear system $\widehat \bfL_M \widehat \bfX_M = \widehat \bfY_M$ can be solved as efficiently as in the conformal case. In the following we choose to use a sparse LU factorization.

\begin{remark}
Let us conclude this section with some criteria discriminating a good Galerkin space, and more generally a good function basis. Anticipating on the analysis of Section~\ref{ss:system and galerkin}, a standard choice is to use tensorized bases. The difficult part is to find a basis to describe the position dependence of the function of consideration. This requires considering the following points:
\begin{itemize}
\item approximation errors and consistency errors should be small. It should be checked in particular that condition~\eqref{eq:conditions hypocoercivity} holds and that the norm of the operator $\PiM \calL (1-\PiM)$ does not grow too fast.
\item the implementation is easier if the space is conformal, since it avoids the computation of $\bfU_M$ using integral quadratures.
\item when the basis is non-orthogonal, the Gram matrix should be inverted. The latter can be ill conditioned, leading to numerically instability, specifically for unbounded position spaces.
\end{itemize}
\end{remark}

\section{Application to a simple one-dimensional system}
\label{s:eq appli}

We present in this section an application of the theory developed in Section~\ref{s:discrete convergence} to a specific example, described in Section~\ref{ss:system and galerkin} together with the Galerkin basis used to discretize the generator. This allows us to prove explicit convergence rates for the approximation error (Section~\ref{ss:approximation error}) and the consistency error (Section~\ref{sec:consistency_specific_model}). For the latter error, we have to further specify the potential in order to check the assumptions ensuring the hypocoercivity of the discretized generator. The final, global error estimate is summarized in~\eqref{eq:total error}. The technical proofs of some claims and bounds are postponed to Appendix~\ref{app:proof discrete hypocoercivity}. We finally present in Section~\ref{s:application langevin} some numerical results illustrating the predicted error bounds.

\subsection{Description of the system and the Galerkin space}
\label{ss:system and galerkin}

We consider a single particle in a one-dimensional periodic potential: $D=1$, $m=1$ and $\calD = 2\pi \bbT = \bbR / 2 \pi \bbZ$. The Galerkin space is constructed using the spectral tensor basis
\[
e_{k,\ell}(q,p) = G_k(q) H_\ell(p),
\]
where $0 \leq k < 2K-1$ and $0 \leq \ell < L$. Compared to the notation of Section~\ref{s:discrete convergence}, the basis size $M = (2K-1)L$ depends on two parameters~$K,L$, which both have to go to infinity for the convergence results to hold. In this section we prefer the index $KL$ instead of $M$, denoting thus $\VKL$, $\PiKL$, $\PhiKL$,... In the remainder of this section we describe our choices for $G_k$ and $H_\ell$.

Note that the size of the matrix, namely the number of tensorized basis elements, increases exponentially with the dimension of the system. In larger dimension one could consider resorting to tensor formats~\cite{Hackbusch12}, as is done for the high-dimensional Schrödinger equation in~\cite{Yserentant10}, carefully making use of the symmetries and of the structure of the equation.

\paragraph{Weighted Fourier basis $(G_k)$.}
Fourier modes provide a natural basis to approximate periodic functions, such as functions of the positions here. Since the measure appearing in the scalar product is $\nu$, we consider in fact the following $L^2(\nu)$-orthonormal modes:
\begin{equation}
  \label{eq:functions g_k}
  \begin{aligned}
    G_0(q) &= \sqrt{\frac{Z_{\beta, \nu}}{2\pi}} \, \rme^{\beta V(q)/2}, \\
    G_{2k}(q) &= \sqrt{\frac{Z_{\beta, \nu}}{\pi}} \cos(kq) \, \rme^{\beta V(q)/2}, \qquad k \geq 1,\\
    G_{2k-1}(q) &= \sqrt{\frac{Z_{\beta, \nu}}{\pi}} \sin(kq) \, \rme^{\beta V(q)/2}, \qquad k \geq 1.
  \end{aligned}
\end{equation}
Note that the functions $G_k$ for $k \geq 1$ do not have mean~0 with respect to~$\nu$ (except for very specific potentials such as $V=0$). The spanned discretization space is thus non-conformal: $\VKL \not\subset \tLmu$.

\paragraph{Hermite functions basis $(H_\ell)$.}
Since the marginal measure $\kappa$ in the momentum variables is Gaussian with variance $\betainv$, we consider the following orthonormal Hermite modes for $\ell \in \bbN$:
\[
H_\ell(p) =  \frac 1 {\sqrt{\ell!}} \widetilde H_\ell \left(\sqrt{ \beta} p \right), 
\qquad 
\widetilde H_\ell (y) =  (-1)^\ell \rme^\frac{y^2}{2} \frac{d^\ell}{dy^\ell} \left( \rme^{-\frac{y^2}{2}} \right).
\]
They are well suited to our problem since they are the eigenfunctions of the symmetric part $\LFD = - \beta^{-1} \partial_p^* \partial_p$ of the generator. Indeed, 
\begin{equation}
  \label{eq:derivatives H}
  \forall \ell \in \bbN, \qquad \partial_p H_\ell = \sqrt{\beta \ell} H_{\ell-1} \qquad \mbox{and} \quad \partial_p^* H_\ell = \sqrt{\beta (\ell+1)} H_{\ell+1},
\end{equation}
so that
\begin{equation}
  \label{eq:LFD hermite}
  \forall \ell \in \bbN, \qquad \LFD H_\ell = -\ell H_\ell.
\end{equation}

\begin{remark}
The basis we consider is similar to the one used in~\cite{Risken96} and~\cite{PavVog08}, where the modes in position are the standard Fourier modes. The latter modes are orthogonal for the uniform measure on the compact position space $\calD$ rather than on~$\Lnu$. Therefore, the scalar product used in Subsection~\ref{ss:matrices} should be replaced with the scalar product associated with the measure $\widetilde{\mu}(\dd q \, \dd p) = |\mathcal{D}|^{-1} \kappa(\dd p) \, \dd q$. The results of Section~\ref{s:discrete convergence}  could be adapted to this scalar product since the measures $\mu$ and $\widetilde{\mu}$ are equivalent. Note that the discretization based on the standard Fourier modes is a conformal one since one of the tensorized modes is proportional to~$\bfone$, which simplifies the implementation. It is however not generalizable to unbounded position spaces because the uniform measure is not normalizable. An interesting question, not considered in this work, is to quantify the relative performances of the approaches based on orthonormal bases either on $\Lmu$ or $\rmL^2(\widetilde{\mu})$.
\end{remark}

\paragraph{Rigidity matrix.}
In order to give the expression of the rigidity matrix, we introduce, for a Fourier basis of $2K-1$ weighted Fourier modes, the matrix $\bfQ$ with entries
\begin{equation}
  \label{eq:Q matrix}
  \bfQ_{k,k'} = \lang G_k, \partial_q G_{k'} \rang_\Lnu,
\end{equation}
and, for $L$ Hermite modes, the matrix $\bfP$ with entries
\begin{equation}
  \label{eq:matrix P}
  \bfP_{\ell, \ell'} = \lang H_\ell, \partial_p H_{\ell'} \rang_\Lkappa = \lang H_\ell, \sqrt{\beta \ell'} H_{\ell'-1} \rang_\Lkappa = \sqrt{\beta \ell'} \delta_{\ell, \ell'-1}.
\end{equation}
Note that $\bfP$ is sparse in view of~\eqref{eq:derivatives H}. The matrix $\bfQ$ is, on the other hand, dense in general, except when $V$ is a trigonometric polynomial. In the following, we choose $V(q) = 1 - \cos(q)$ in order for $\bfQ$ to be tridiagonal. For a general, smooth potential~$V$, $\bfQ$ would be dense but with coefficients which decay fast away from the diagonal. 

The rigidity matrix which appears on the left-hand side of~\eqref{eq:linear system poisson} has entries (for $0 \leq k \leq 2K-2$ and $0 \leq \ell \leq L-1$)
\[
\begin{aligned}
  \bfL_{k \ell, k' \ell'} &= \lang e_{k \ell}, -\calL e_{k' \ell'} \rang \\
  &= -\betainv \left[ \lang G_k H_\ell, \paq \pap^* G_{k'} H_{\ell'} \rang
    -\lang G_k H_\ell, \paq^* \pap G_{k'} H_{\ell'} \rang
    -\gamma \lang G_k H_\ell, \pap^* \pap G_{k'} H_{\ell'} \rang \right] \\
  &=-\betainv \bfQ_{k, k'} \bfP_{\ell',\ell} + \betainv \bfQ_{k',k} \bfP_{\ell,\ell'} + \gamma \bfI_{k,k'} \bfN_{\ell,\ell'},
\end{aligned}
\]
where $\bfI_{k,k'} = \delta_{k, k'}$ and $\bfN_{\ell,\ell'} = \ell \, \delta_{\ell, \ell'}$. In practice we transform these tensors into matrices by a hashing function $\zeta : (k,\ell) \to \zeta(k,\ell) \in \bbN$. The matrix $\bfL$ is then of size $(2K-1)L$. 

\subsection{Approximation error for the tensor basis}
\label{ss:approximation error}

\newcommand{\indDv}{s}

We define the projectors $\PiK$ and $\PiL$ by
\[
\PiK \varphi = \sum_{k = 0}^{2K-2} \lang \varphi, G_k \rang G_k, \qquad \PiL \varphi = \sum_{\ell = 0}^{L-1} \lang \varphi, H_\ell \rang H_\ell.
\]
Their complements are $\PiKo = 1-\PiK$ and $\PiLo = 1-\PiL$. With this notation, the projector onto the Galerkin space is $\PiKL = \PiK \PiL$. The study of the approximation error $(1-\PiKL)\Phi$ is performed by first estimating the error arising from the projection~$\PiK$ (see Lemma~\ref{lem:approx_PiK}), and then the error arising from~$\PiL$ (see Lemma~\ref{lem:approx_PiL}). The conclusion follows by remarking that 
\begin{equation}
  \label{eq:ineq_PiKL}
  0 \leq 1 - \PiKL = 1-\PiK + \PiK (1-\PiL) \leq \PiKo + \PiLo,
\end{equation}
see Proposition~\ref{prop:approximation error}.

\begin{lemma}
  \label{lem:approx_PiK}
  Assume that $V$ is smooth. Then, for any $\indDv \in \bbN$, there exists $M_s \in \bbR_+$ such that
  \[
  \forall \varphi \in \rmH^\indDv(\nu), \quad \forall K \geq 1, 
  \qquad 
  \left\| \varphi - \PiK \varphi \right\|_\Lnu \leq \frac {M_s}{K^\indDv} \| \varphi \|_{\rmH^\indDv(\nu)}.
  \]
\end{lemma}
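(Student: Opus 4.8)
The statement is a standard spectral approximation estimate for a weighted Fourier basis, and the plan is to reduce it to a classical result about truncated Fourier series by peeling off the Gaussian-type weight $\rme^{\beta V/2}$. The key observation is that the map $U : \varphi \mapsto \rme^{-\beta V/2}\varphi$ is an isometry from $\Lnu$ onto $\rmL^2(2\pi\bbT, \frac{1}{2\pi}\dd q) = \rmL^2(\mathbb{T})$, because $\nu(\dd q) = \Zinvnu \rme^{-\beta V}\dd q$ and $\| \varphi \|_\Lnu^2 = \Zinvnu \int |\varphi|^2 \rme^{-\beta V}\dd q$. Under this isometry, the weighted Fourier modes $G_k$ are sent precisely to the standard orthonormal trigonometric system $\{(2\pi)^{-1/2}, \pi^{-1/2}\cos(kq), \pi^{-1/2}\sin(kq)\}$, so $\PiK$ conjugates to the standard Fourier truncation projector $S_{K-1}$ onto frequencies $|k| \leq K-1$. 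Hence $\| \varphi - \PiK\varphi\|_\Lnu = \| U\varphi - S_{K-1}(U\varphi)\|_{\rmL^2(\mathbb{T})}$, and the problem is moved entirely to the flat torus.

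Next I would invoke the elementary $L^2$ estimate for Fourier truncation on $\mathbb{T}$: if $f \in \rmH^s(\mathbb{T})$ with Fourier coefficients $\hat f(k)$, then
\[
\| f - S_{K-1} f\|_{\rmL^2(\mathbb{T})}^2 = \sum_{|k| \geq K} |\hat f(k)|^2 \leq \frac{1}{K^{2s}} \sum_{|k| \geq K} k^{2s}|\hat f(k)|^2 \leq \frac{C_s}{K^{2s}} \| f\|_{\rmH^s(\mathbb{T})}^2,
\]
using that $|k|^{2s} \geq K^{2s}$ for $|k| \geq K$. It then remains to control $\| U\varphi\|_{\rmH^s(\mathbb{T})}$ by $\|\varphi\|_{\rmH^s(\nu)}$. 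This is where the smoothness of $V$ enters: by the Leibniz rule, $\partial_q^j(\rme^{-\beta V/2}\varphi)$ is a sum of terms $\rme^{-\beta V/2} P_{j,i}(V)\, \partial_q^i\varphi$ for $0 \leq i \leq j$, where each $P_{j,i}$ is a fixed polynomial in the derivatives $\partial_q V, \dots, \partial_q^{j} V$. Since $V$ is smooth and $\calD = 2\pi\bbT$ is compact, all these derivatives are bounded, so each $P_{j,i}(V)$ is bounded on $\calD$, and one gets $\| \partial_q^j(U\varphi)\|_{\rmL^2(\mathbb{T})} \leq C \sum_{i=0}^{j}\| \rme^{-\beta V/2}\partial_q^i\varphi\|_{\rmL^2(\mathbb{T})} = C\sum_{i=0}^j \| \partial_q^i\varphi\|_\Lnu$. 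Summing over $j \leq s$ yields $\| U\varphi\|_{\rmH^s(\mathbb{T})} \leq C_s' \|\varphi\|_{\rmH^s(\nu)}$ with $C_s'$ depending only on $s$, $\beta$ and (finitely many derivatives of) $V$. Combining the three steps gives the claim with $M_s = C_s^{1/2} C_s'$.

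The only mild subtlety — and the point I would treat most carefully — is the bookkeeping in the last step: one must check that the conjugation by $U$ maps $\rmH^s(\nu)$ (defined via $\partial_q^\alpha\varphi \in \Lmu$, which for functions of $q$ alone is the same as $\partial_q^\alpha\varphi \in \Lnu$) boundedly into $\rmH^s(\mathbb{T})$, which is exactly the weighted Leibniz estimate above, and that the constant is genuinely independent of $K$ (it is, since the truncation estimate on $\mathbb{T}$ has a $K$-independent constant). Everything else is routine. One should also note that the factor $\rme^{+\beta V/2}$ appearing in the definition of $G_k$ is harmless: it is precisely the inverse weight, so $U G_k$ is a pure trigonometric function with no residual weight, which is what makes $\PiK$ conjugate cleanly to $S_{K-1}$. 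No compactness or finiteness issue arises because $\calD$ is compact and $V$ smooth, so $\rme^{\pm\beta V/2}$ and all its derivatives are uniformly bounded.
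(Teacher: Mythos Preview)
Your proof is correct and follows essentially the same strategy as the paper: both conjugate away the weight $\rme^{\beta V/2}$ to reduce $\PiK$ to the standard Fourier truncation on the flat torus, apply the elementary tail estimate $\sum_{|k|\geq K}|\hat f(k)|^2 \leq K^{-2s}\|\partial_q^s f\|_{\rmL^2(\dd q)}^2$, and then use the Leibniz rule together with smoothness of $V$ on the compact domain to bound the flat $\rmH^s$ norm of $\rme^{-\beta V/2}\varphi$ by $\|\varphi\|_{\rmH^s(\nu)}$. One cosmetic remark: your map $U:\varphi\mapsto \rme^{-\beta V/2}\varphi$ is an isometry from $\Lnu$ to $\rmL^2(\dd q)$ only after including the factor $Z_{\beta,\nu}^{-1/2}$ (this is exactly the paper's $\widetilde\varphi$), but this harmless normalization does not affect the argument.
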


\begin{proof}
For $\varphi \in \rmH^\indDv(\nu)$, we introduce $\widetilde{\varphi} = Z_{\beta,\nu}^{-1/2}\rme^{-\beta V/2} \varphi \in \rmL^2(\dd q)$, as well as the flat Fourier basis $\widetilde{G}_k = Z_{\beta,\nu}^{-1/2}\rme^{-\beta V/2} G_k$ which is orthonormal on $\rmL^2([0,2\pi])$. Since $\calD = 2 \pi \bbT$ is compact, $\rmH^\indDv(\nu) = \rmH^\indDv(\dd q)$ for any $\indDv \in \bbN$ and there exists $M_s \in \bbR_+$ such that 
\begin{equation}
  \label{eq:ineg_Ms}
  \| \paq^\indDv \varphi \|_{\rmL^2(\dd q)} \leq M_\indDv \| \varphi \|_{\rmH^\indDv(\nu)}. 
\end{equation}
By the Bessel-Parseval inequality, 
\[
\begin{aligned}
  \| \varphi - \PiK \varphi \|_\Lnu^2 &= \sum_{k \geq 2K-1} \lang \varphi, G_k \rang^2
  = \sum_{k \geq 2K-1}  \left( \int_0^{2\pi} \widetilde{\varphi} \, \widetilde{G}_k \, \dd q \right)^2 \\
  &= \frac1\pi \sum_{k \geq K}  \left(  \int_0^{2\pi} \widetilde{\varphi}(q) \, \cos(kq) \, \dd q \right)^2 + \left( \int_0^{2\pi} \widetilde{\varphi}(q) \, \sin(kq) \, \dd q \right)^2\\
  &\leq \frac1\pi \sum_{k \geq K}  \left( \int_0^{2\pi} \widetilde{\varphi}(q) \, \frac {k^\indDv}{K^\indDv} \cos(kq) \, \dd q \right)^2 + \left( \int_0^{2\pi} \widetilde{\varphi}(q) \, \frac {k^\indDv}{K^\indDv} \sin(kq) \, \dd q \right)^2\\
  &= \frac{1}{\pi K^{2\indDv}} \sum_{k \geq K}  \left( \int_0^{2\pi} \widetilde{\varphi}(q) \, \paq^\indDv \cos(kq) \, \dd q \right)^2 + \left( \int_0^{2\pi} \widetilde{\varphi}(q) \, \paq^\indDv \sin(kq) \, \dd q \right)^2\\
  &\leq \frac{1}{K^{2\indDv}} \| \paq^\indDv \widetilde{\varphi} \|_{\rmL^2(\dd q)}^2,
\end{aligned}
\]
which allows to conclude with~\eqref{eq:ineg_Ms}.
\end{proof}

\begin{lemma}
  \label{lem:approx_PiL}
  For any $\indDv \in \bbN$ and $\varphi \in \rmH^\indDv(\kappa)$, it holds
  \[
  \forall L \geq \indDv, \qquad \left\| \varphi - \PiL \varphi \right\|_\Lkappa \leq \left[ \beta (L-\indDv+1) \right]^{-\indDv/2} \| \pap^\indDv \varphi \|_\Lkappa.
  \]
\end{lemma}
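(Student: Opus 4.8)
The plan is to carry out the computation directly in the orthonormal Hermite basis $(H_\ell)_{\ell \in \bbN}$ of $\Lkappa$, mimicking the Bessel--Parseval argument used for Lemma~\ref{lem:approx_PiK}. Writing $\varphi = \sum_{\ell \geq 0} c_\ell H_\ell$ with $c_\ell = \lang \varphi, H_\ell \rang_\Lkappa$, the projection error is simply the tail of this series: $\| \varphi - \PiL \varphi \|_\Lkappa^2 = \sum_{\ell \geq L} c_\ell^2$. The goal is then to bound this tail by $\big[\beta(L-s+1)\big]^{-s} \| \pap^s \varphi \|_\Lkappa^2$.

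The second step is to compute $\| \pap^s \varphi \|_\Lkappa^2$ in terms of the same coefficients. Iterating the differentiation rule $\pap H_\ell = \sqrt{\beta \ell}\, H_{\ell-1}$ from~\eqref{eq:derivatives H} (a one-line induction on $s$), one gets $\pap^s H_\ell = 0$ for $\ell < s$ and $\pap^s H_\ell = \beta^{s/2} \sqrt{\ell (\ell-1) \cdots (\ell-s+1)}\, H_{\ell-s}$ for $\ell \geq s$. Since the shifted family $(H_{\ell-s})_{\ell \geq s}$ is again orthonormal in $\Lkappa$, this yields
\[
\| \pap^s \varphi \|_\Lkappa^2 = \beta^s \sum_{\ell \geq s} c_\ell^2 \, \ell(\ell-1)\cdots(\ell-s+1),
\]
and the assumption $\varphi \in \Hskappa$ is precisely what makes this sum finite.

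The last step is an elementary comparison of the two expressions. For $\ell \geq L \geq s$, each of the $s$ factors in $\ell(\ell-1)\cdots(\ell-s+1)$ is at least $\ell - s + 1 \geq L - s + 1 \geq 1$, hence $\ell(\ell-1)\cdots(\ell-s+1) \geq (L-s+1)^s$. Discarding in the sum above all the (nonnegative) terms with $s \leq \ell < L$ therefore gives
\[
\| \pap^s \varphi \|_\Lkappa^2 \geq \beta^s (L-s+1)^s \sum_{\ell \geq L} c_\ell^2 = \big[\beta(L-s+1)\big]^s \, \| \varphi - \PiL \varphi \|_\Lkappa^2,
\]
and taking square roots gives the announced estimate (the hypothesis $L \geq s$ being used exactly to ensure $L - s + 1 \geq 1$, so that the bracketed quantity is positive and the monotonicity argument is valid).

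There is no real obstacle here; the only mildly delicate points are the inductive verification of the $s$-fold derivative formula for the Hermite functions and the bookkeeping with the falling factorials $\ell!/(\ell-s)!$, together with checking that the termwise manipulations are legitimate — which they are, since $\varphi \in \Hskappa$ means exactly that the weighted coefficient sequence $(c_\ell\sqrt{\ell!/(\ell-s)!})_{\ell \geq s}$ is square-summable. One could equivalently phrase the argument spectrally through $\pap^*\pap H_\ell = \beta\ell H_\ell$ (equivalently~\eqref{eq:LFD hermite}) and a representation of the $\Hskappa$-norm via powers of $\LFD$, but the direct Parseval computation above is the most transparent.
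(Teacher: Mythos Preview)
Your proof is correct and follows essentially the same approach as the paper: both expand the tail $\sum_{\ell \geq L} \langle \varphi, H_\ell\rangle^2$ via Parseval, use the iterated differentiation rule~\eqref{eq:derivatives H} to produce the falling factorial $\ell(\ell-1)\cdots(\ell-\indDv+1)$, and bound it below by $(L-\indDv+1)^\indDv$. The only cosmetic difference is that the paper inserts the factor via $(\pap^*)^\indDv H_{\ell-\indDv}$ and then moves the adjoint onto $\varphi$, whereas you compute $\pap^\indDv \varphi$ directly in the basis; these are the same computation by duality.
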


\begin{proof}
Fix $L \geq \indDv$. In view of~\eqref{eq:derivatives H}, it holds 
\[
\left(\partial_p^*\right)^s H_{\ell-s} = \beta^{s/2} \sqrt{(\ell-s+1)\dots \ell} \, H_\ell,
\]
with $\sqrt{(\ell-s+1)\dots \ell} \geq (L-s+1)^{s/2}$ when $\ell \geq L$. Therefore,
\[
\begin{aligned}
  \| \varphi - \PiL \varphi \|_\Lkappa^2 = \sum_{\ell \geq L} \lang \varphi, H_\ell \rang^2 & \leq \sum_{\ell \geq L} \lang \varphi, \frac {\sqrt{(\ell-s+1)\dots \ell}}{(L-\indDv+1)^{\nicefrac \indDv 2}} H_\ell \rang^2 \\
  &= \left[ \beta (L-\indDv+1) \right]^{-\indDv} \sum_{\ell \geq L} \lang \varphi, (\pap^*)^\indDv H_{\ell-\indDv} \rang^2 \\
  &\leq \left[ \beta (L-\indDv+1) \right]^{-\indDv} \| \pap^\indDv \varphi \|_\Lkappa^2,
\end{aligned}
\]
from which the conclusion follows.
\end{proof}

The following approximation result is then directly deduced from the previous lemmas and~\eqref{eq:ineq_PiKL}.

\begin{prop}
\label{prop:approximation error}
Assume that $V$ is smooth. Then, for any $s \in \bbN$, there exists $A_s \in \bbR_+$ such that
\[
\forall \varphi \in \Hsmu, \qquad \forall K \geq 1, \ L \geq s, 
\qquad
\| \varphi - \PiKL \varphi \|_\Lmu \leq A_s \left(\frac{1}{K^s} + \frac{1}{L^{s/2}}\right) \|\varphi\|_{\Hsmu}.
\]
\end{prop}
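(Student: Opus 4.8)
The plan is to combine the two one-dimensional approximation estimates (Lemma~\ref{lem:approx_PiK} and Lemma~\ref{lem:approx_PiL}) with the operator inequality~\eqref{eq:ineq_PiKL}, together with the fact that the tensor structure lets us control the $\Hsmu$-norm of a function of $(q,p)$ by the separate regularities in $q$ and in $p$. Concretely, for $\varphi \in \Hsmu$ I would write, using $1-\PiKL \leq \PiKo + \PiLo$ as in~\eqref{eq:ineq_PiKL}, that
\[
\| \varphi - \PiKL \varphi \|_\Lmu \leq \| \PiKo \varphi \|_\Lmu + \| \PiLo \varphi \|_\Lmu,
\]
and then bound each term by the corresponding lemma. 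For the first term, since $\PiKo$ acts only on the position variable, I would apply Lemma~\ref{lem:approx_PiK} to the function $q \mapsto \varphi(q,p)$ for (almost) every fixed $p$, obtaining a pointwise-in-$p$ bound $\|\PiKo\varphi(\cdot,p)\|_{\Lnu}^2 \leq (M_s/K^s)^2 \|\varphi(\cdot,p)\|_{\rmH^s(\nu)}^2$, and then integrate in $p$ against $\kappa$; since $\mu = \nu \otimes \kappa$, integrating the right-hand side yields $(M_s/K^s)^2 \sum_{|\alpha|\leq s, \alpha \text{ in } q} \|\partial_q^\alpha \varphi\|_\Lmu^2 \leq (M_s/K^s)^2 \|\varphi\|_{\Hsmu}^2$. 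Symmetrically, for the second term I would apply Lemma~\ref{lem:approx_PiL} in the $p$ variable for fixed $q$ and integrate in $q$ against $\nu$, using $L-s+1 \geq L/(s+1)$ (say, for $L \geq s$) to replace the factor $[\beta(L-s+1)]^{-s/2}$ by a constant times $L^{-s/2}$. Setting $A_s$ to be the maximum of the two resulting constants and using $(a+b) \leq a+b$ gives the claimed bound.

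The one genuinely delicate point is the measurability/integrability bookkeeping needed to pass from the two separate one-dimensional lemmas — which are stated for functions on $\calD$ and on $\bbR$ respectively — to the two-dimensional statement by Fubini. One has to check that for $\varphi \in \Hsmu$ the slices $\varphi(\cdot,p)$ belong to $\rmH^s(\nu)$ for $\kappa$-a.e.\ $p$ (and similarly for the $p$-slices), that the partial Sobolev norms are measurable functions of the frozen variable, and that the tensor product structure $\mu = \nu \otimes \kappa$ indeed lets us split the mixed derivatives appearing in $\|\varphi\|_{\Hsmu}^2$ into the purely-$q$ and purely-$p$ contributions controlled by the two lemmas. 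This is routine but is the only step that is not a one-line consequence of what precedes; a clean way to handle it is to first establish the estimate on the dense subspace of smooth functions (where all slicing is unproblematic) and then extend by density, using that $\PiKo$ and $\PiLo$ are bounded on $\Lmu$ with norm $1$.

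Everything else is immediate: the operator inequality~\eqref{eq:ineq_PiKL} is already recorded in the excerpt, the two lemmas are already proved, and the only remaining arithmetic is absorbing the $\beta$-dependent and $s$-dependent constants into $A_s$ and replacing $(L-s+1)^{-s/2}$ by a multiple of $L^{-s/2}$ valid for $L \geq s$. Thus the proof is short: invoke~\eqref{eq:ineq_PiKL}, split the error, apply Lemma~\ref{lem:approx_PiK} and Lemma~\ref{lem:approx_PiL} sliced in the complementary variable, integrate, and collect constants. I expect the write-up to be no more than a short paragraph once the density argument is invoked to sidestep the measurability subtleties.
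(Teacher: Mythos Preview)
Your proposal is correct and follows exactly the approach the paper takes: the paper simply states that the result is ``directly deduced from the previous lemmas and~\eqref{eq:ineq_PiKL}'', i.e.\ it splits the error via $1-\PiKL \leq \PiKo + \PiLo$ and applies Lemma~\ref{lem:approx_PiK} and Lemma~\ref{lem:approx_PiL}. Your write-up is in fact more detailed than the paper's (which omits the Fubini/slicing discussion entirely), so nothing is missing.
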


The approximation error $\|(1-\PiKL)\Phi\|$ thus depends on the regularity of the solution~$\Phi$ of the Poisson problem. Now, the operator $\calL^{-1}$ is a bounded operator on $\rmH^s(\mu) \cap \tLmu$ for any $s \geq 0$ by the results of~\cite[Section~3.2]{Talay02} and~\cite{Kopec15} (see also~\cite{Eckmann03,Herau04}). Therefore, when $R \in \rmH^s(\mu) \cap \tLmu$, the solution $\Phi$ belongs to $\rmH^s(\mu) \cap \tLmu$, and there is $\widetilde{A}_s \in \bbR_+$ such that
\begin{equation}
  \label{eq:approximation error solution}
  \| \Phi - \PiKL \Phi\|_\Lmu \leq \| \Phi - \PiK \Phi\|_\Lmu + \| \Phi - \PiL \Phi\|_\Lmu \leq \widetilde{A}_s \left(\frac{1}{K^s} + \frac{1}{L^{s/2}}\right) \| R \|_{\Hsmu}.
\end{equation}

\begin{remark}
  In fact, it can be expected that the operator $\calL^{-1}$ further regularizes in the momentum variable; more precisely that $\partial_p \Phi \in \rmH^s(\mu)$ when $R \in \rmH^s(\mu)$. This is consistent with what we observe in the numerical simulations reported in Section~\ref{s:application langevin}. Note also that the estimates provided by~\cite{Talay02,Eckmann03,Herau04,Kopec15} are obtained for a fixed friction~$\gamma>0$. Some additional work is needed to carefully quantify their dependence upon~$\gamma$, although we expect that the bounds on $\calL^{-1}$ considered as an operator on $\rmH^s(\mu) \cap \tLmu$ should still scale as $\max(\gamma,\gamma^{-1})$. 
\end{remark}

Let us conclude this section by an approximation result involving $\Pi_{KL,0}$ rather than $\PiKL$ (see the decomposition~\eqref{eq:decomposition of the error non conformal}, to be compared with~\eqref{eq:decomposition of the error}). 

\begin{corollary}
  Assume that $V$ is smooth. Then, for any $s \in \bbN$, there exists $A_s \in \bbR_+$ such that
  \[
  \forall \varphi \in \Hsmu \cap \tLmu, \qquad \forall K \geq 1, \ L \geq s, 
  \qquad
  \| \varphi - \Pi_{KL,0} \varphi \|_\Lmu \leq A_s \left(\frac{1}{K^s} + \frac{1}{L^{s/2}}\right) \|\varphi\|_{\Hsmu}.
  \]
\end{corollary}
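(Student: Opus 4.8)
The plan is to derive this corollary from Proposition~\ref{prop:approximation error} by comparing the two Galerkin projectors $\PiKL$ and $\PiKLnot$ on the zero-mean subspace $\tLmu$. Recall the orthogonal splitting $\VKL = (\VKL \cap \tLmu) \oplus \bbR u_{KL}$, with $u_{KL} = \PiKL \bfone / \|\PiKL \bfone\|$, and that $\PiKLnot = \PiKLnot \PiKL$ since $\VKL \cap \tLmu \subset \VKL$. Decomposing $\PiKL \varphi$ along this splitting gives, for every $\varphi \in \Lmu$,
\[
\PiKL \varphi = \PiKLnot \varphi + \lang \varphi, u_{KL} \rang \, u_{KL},
\]
hence $\varphi - \PiKLnot \varphi = (\varphi - \PiKL \varphi) + \lang \varphi, u_{KL} \rang u_{KL}$ and, since $\| u_{KL} \| = 1$,
\[
\| \varphi - \PiKLnot \varphi \|_\Lmu \leq \| \varphi - \PiKL \varphi \|_\Lmu + \left| \lang \varphi, u_{KL} \rang \right|.
\]

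The second step is to estimate $|\lang \varphi, u_{KL} \rang|$, again in terms of the approximation error of $\PiKL$, using that $\varphi$ has zero mean with respect to $\mu$. Since $\PiKL$ is self-adjoint and $\|\bfone\|_\Lmu = 1$,
\[
\left| \lang \varphi, u_{KL} \rang \right| = \frac{\left| \lang \PiKL \varphi, \bfone \rang \right|}{\| \PiKL \bfone \|} = \frac{\left| \lang \PiKL \varphi - \varphi, \bfone \rang \right|}{\| \PiKL \bfone \|} \leq \frac{\| \varphi - \PiKL \varphi \|_\Lmu}{\| \PiKL \bfone \|}.
\]
It then remains to bound $\| \PiKL \bfone \|$ from below by a positive constant that is uniform in $K \geq 1$ and $L$. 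Since $H_0 \equiv 1$, one has $\PiL \bfone = \bfone$, so $\PiKL \bfone = \PiK \bfone$ and $\| \PiKL \bfone \|_\Lmu = \| \PiK \bfone \|_\Lnu$; because the spaces spanned by $G_0, \dots, G_{2K-2}$ are nested in $K$, the map $K \mapsto \| \PiK \bfone \|_\Lnu$ is non-decreasing, so $\| \PiKL \bfone \|_\Lmu \geq \| \Pi_1^q \bfone \|_\Lnu = \lang \bfone, G_0 \rang_\Lnu =: c_0 > 0$, the positivity following from $G_0 > 0$ on the compact set $\calD$.

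Putting the three displays together yields $\| \varphi - \PiKLnot \varphi \|_\Lmu \leq (1 + c_0^{-1}) \| \varphi - \PiKL \varphi \|_\Lmu$, and Proposition~\ref{prop:approximation error} gives the claim with the constant $(1 + c_0^{-1}) A_s$ in place of $A_s$. The argument is essentially routine; the only point deserving attention is that the lower bound on $\| \PiKL \bfone \|$ must hold over the whole range of $K$ and $L$ rather than merely asymptotically, which is why one invokes monotonicity of $\| \PiK \bfone \|_\Lnu$ in $K$ instead of the convergence $\PiKL \bfone \to \bfone$.
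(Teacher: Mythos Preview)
Your proof is correct and follows essentially the same route as the paper: decompose $\varphi - \PiKLnot\varphi$ into $(\varphi - \PiKL\varphi)$ plus the $u_{KL}$-component, and use $\varphi \perp \bfone$ to control the latter. The only difference is a dual twist in bounding $\lang \varphi, u_{KL}\rang$: the paper writes $\lang \PiK\bfone,\varphi\rang = -\lang (1-\PiK)\bfone,\varphi\rang$ and bounds by $\|(1-\PiK)\bfone\|\,\|\varphi\|$ (invoking the fast decay of $\|(1-\PiK)\bfone\|$ since $\bfone$ is smooth), whereas you transfer $\PiKL$ to $\varphi$ and bound by $\|(1-\PiKL)\varphi\|$, which feeds directly into Proposition~\ref{prop:approximation error}. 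Your variant is slightly more self-contained, while the paper's gives a sharper extra term (superpolynomial decay in $K$, independent of the regularity of $\varphi$). You also make explicit the uniform lower bound on $\|\PiKL\bfone\|$ via monotonicity in $K$, a point the paper leaves implicit.
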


\begin{proof}
Note first that $\lang H_\ell, \bfone \rang = \delta_{\ell,0}$, so that $\PiKL \bfone = \PiK \bfone$ and $u_K = \PiK \bfone/\| \PiK \bfone \|$ depends only on the position variables for $L \geq 1$. Next, in view of the computations performed in the proof of Corollary~\ref{coro:non-conformal hypocoercivity},
\[
\Pi_{KL,0} \varphi = \PiKL \varphi - \lang \frac{\PiK \bfone}{\|\PiK \bfone\|},\varphi\rang u_K,
\]
where $\|u_K\|=1$. Since $\varphi \in \tLmu$, it holds in fact
\[
\lang \frac{\PiK \bfone}{\|\PiK \bfone\|},\varphi\rang = \lang \frac{(1-\PiK) \bfone}{\|\PiK \bfone\|},\varphi\rang,
\]
which converges to~0 faster than any polynomial in $K$ in view of Proposition~\ref{prop:approximation error}.
\end{proof}

\subsection{Consistency error}
\label{sec:consistency_specific_model}

In order to simplify the computations (in particular to have some simple structure on the derivatives of the Fourier modes) we consider the following potential:
\[
V(q) = 1 - \cos(q).
\]
In this case, using the trigonometric identities
\[
\begin{aligned}
  2 \cos(kq) \sin(q) &= \sin((k+1)q) - \sin((k-1)q) \\
  2 \sin(kq) \sin(q) &= -\cos((k+1)q) + \cos((k-1)q),
\end{aligned}
\]
a straightforward computation shows that the derivatives of the basis functions satisfy 
\begin{equation}
  \displaystyle
  \label{eq:derivatives G}
  \begin{array}{rlrl}
    \dps \partial_q G_0 &\hspace{-6pt}= \dps \frac \beta {2 \sqrt 2} G_1, &\qquad
    \dps \partial_q G_1 &\hspace{-6pt}= \dps \frac \beta {2 \sqrt 2} G_0 + G_2 - \frac \beta 4 G_4, \\[10pt]
    \dps \partial_q G_{2k} &\hspace{-6pt}= \dps -\frac \beta 4 G_{2k-3} - k G_{2k-1} + \frac \beta 4 G_{2k+1}, &\qquad
    \dps \partial_q G_{2k-1} &\hspace{-6pt}= \dps\frac \beta 4 G_{2k-2} + k G_{2k} - \frac \beta 4 G_{2k+2},
  \end{array}
\end{equation}
where by convention $G_{-1} = 0$. The matrix $\bfQ$ defined in~\eqref{eq:Q matrix} is therefore a band matrix with width $4$. 

The well-posedness of the variational formulation associated with the Galerkin space is given by the following result. 

\begin{prop}
\label{prop:hypocoercivity sinus}
The matrix $\widehat \bfL_{KL}$ defined in~\eqref{eq:widehat_L_M} is invertible for $K, L$ sufficiently large. More precisely the resolvent bound satisfies
\begin{equation}
  \label{eq:estimate_tlambdagammaM_explicit}
  \tlambdagammaKL \geq \lambda_\gamma - \frac \varepsilon {1+\varepsilon} \left[ \frac {(1+ \sqrt 2) \beta} {2 K} + \frac {\beta^3} {16} \frac{ \| (1-\PiKm) \bfone \|^2}{1 - \| (1-\PiK) \bfone \|^2} \right].
\end{equation}
\end{prop}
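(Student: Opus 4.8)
The plan is to invoke Corollary~\ref{coro:non-conformal hypocoercivity}, whose conclusion is exactly that $\widehat\bfL_{KL}$ is invertible with the resolvent bound governed by $\tlambdagammaKL$ of the form~\eqref{eq:estimate_tlambdagammaM}, and to prove~\eqref{eq:estimate_tlambdagammaM_explicit} it suffices to bound the two quantities $\|(A+A^*)(1-\PiKL)\calL\PiKL\|$ and $\|\calL^* u_K\|$ appearing in~\eqref{eq:estimate_tlambdagammaM}, showing in particular that both tend to~$0$ as $K,L\to\infty$ so that the hypotheses~\eqref{eq:conditions hypocoercivity} and~\eqref{eq:additional conditions non-conformal} hold. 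The concrete structure of the tensor basis makes this tractable: since $\LFD$ is diagonalized by the Hermite modes $H_\ell$, it stabilizes $\VKL$, hence $\PiKL\LFD=\LFD\PiKL$ and $(1-\PiKL)\calL\PiKL=(1-\PiKL)\Lham\PiKL$ is independent of~$\gamma$; this is also what licenses pulling out the factor $\varepsilon=\overline\varepsilon\min(\gamma,\gamma^{-1})$ in front of the bracket in~\eqref{eq:estimate_tlambdagammaM_explicit}.

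First I would compute $\|(A+A^*)(1-\PiKL)\Lham\PiKL\|$. Using $\Lham=\beta^{-1}(\nap^*\naq-\naq^*\nap)$ and the mapping properties of $\nap$ on Hermite modes~\eqref{eq:derivatives H} together with the band structure~\eqref{eq:derivatives G} of $\bfQ$, one sees that $\Lham$ applied to a basis element $e_{k\ell}=G_kH_\ell$ produces only finitely many neighbouring modes; the part of $\Lham e_{k\ell}$ that escapes $\VKL$ comes either from the position index crossing the cutoff $2K-1$ (through the $\naq$ action, which picks up factors of order $k\le K$ and band coefficients of size $\beta/4$) or from the momentum index crossing $L$ (through $\nap^*$, with coefficient $\sqrt{\beta(\ell+1)}$). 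Since $A=\Pip A(1-\Pip)$ kills the momentum-only overflow and, by Lemma~\ref{lemma:equivalent norms}, $\|A\|=\|A^*\|\le 1/2$ with $\|\Lham A\,\cdot\|\le\|(1-\Pip)\cdot\|$, only the position overflow survives after composing with $A+A^*$; tracking the precise coefficients in~\eqref{eq:derivatives G} at the edge $k\simeq 2K-2$ yields a bound of order $\beta/K$, and a careful accounting of the $(1+\sqrt2)$ constant (coming from the $G_0,G_1$ anomaly in~\eqref{eq:derivatives G} versus the generic $\beta/4$ coefficients) gives exactly the first term $(1+\sqrt2)\beta/(2K)$ in the bracket. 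This edge-coefficient bookkeeping is the main obstacle: one must show the contributions of $A\,\Lham\,\PiKLo$ and $A^*\,\Lham\,\PiKLo$ do not add up to something larger, exploiting that $A$ is a contraction and that the overflow modes live at position indices $\ge 2K-1$ where the relevant $\bfQ$ entries are all of size $\le\beta/4$.

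Next I would bound $\|\calL^* u_K\|$, where $u_K=\PiK\bfone/\|\PiK\bfone\|$ depends only on~$q$ (since $\PiKL\bfone=\PiK\bfone$ for $L\ge1$, as noted in the proof of the preceding corollary). Writing $\calL^*=-\Lham+\gamma\LFD$ and noting $\LFD u_K=0$ because $u_K$ is momentum-independent, we get $\calL^* u_K=-\Lham u_K=-\beta^{-1}\nap^*\naq u_K$. Here $\naq$ acting on the position-only function $u_K$ and then $\nap^*$ produces a function in the $H_1$-sector, and the only way to leave $\VKL$ is again through the position cutoff: $\naq u_K$ has a component on $G_{2K-1},G_{2K},G_{2K+1},G_{2K+2}$ with coefficients controlled by~\eqref{eq:derivatives G}. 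Expanding $u_K=\|\PiK\bfone\|^{-1}\sum_{k<2K-1}\lang\bfone,G_k\rang G_k$ and differentiating term by term, the overflow is carried entirely by the top few modes $G_{2K-2},G_{2K-3}$, giving $\|\calL^*u_K\|^2\le C\beta^4\|(1-\PiKm)\bfone\|^2/\|\PiK\bfone\|^2$ with $\|\PiK\bfone\|^2=1-\|(1-\PiK)\bfone\|^2$; pinning down the constant as $\beta^3/16$ (one power of $\beta$ being absorbed when $\|\calL^*u_K\|$ rather than its square enters~\eqref{eq:estimate_tlambdagammaM}, together with the $\beta/4$ and $\beta/2\sqrt2$ coefficients squared) produces the second term in the bracket. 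Both $\|(1-\PiKm)\bfone\|$ and $\|(1-\PiK)\bfone\|$ decay faster than any polynomial in $K$ by Proposition~\ref{prop:approximation error} applied to the smooth function $\bfone\equiv e^{\beta V/2}\cdot e^{-\beta V/2}$, so the second term vanishes and, combined with the $O(1/K)$ first term, both hypotheses of Corollary~\ref{coro:non-conformal hypocoercivity} are met. Plugging these two bounds into~\eqref{eq:estimate_tlambdagammaM} gives~\eqref{eq:estimate_tlambdagammaM_explicit} and invertibility of $\widehat\bfL_{KL}$ for $K,L$ large enough via Proposition~\ref{prop:saddle point} and~\eqref{eq:bound inverse tcalL}.
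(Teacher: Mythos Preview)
Your overall strategy---invoke Corollary~\ref{coro:non-conformal hypocoercivity} and bound the two terms in~\eqref{eq:estimate_tlambdagammaM}---is correct, as is the observation that $\LFD$ stabilizes $\VKL$ and that the momentum-overflow piece of $(1-\PiKL)\Lham\PiKL$ is annihilated by the $\Pip$ sitting inside $A$ and $A^*$. But your argument for the $O(1/K)$ decay of $\|(A+A^*)(1-\PiKL)\Lham\PiKL\|$ has a genuine gap. The edge coefficients in~\eqref{eq:derivatives G} responsible for the position overflow $\PiKo\paq\PiK$ are all of \emph{fixed} size $\beta/4$; combined merely with the contraction bound $\|A\|\le 1/2$ this gives an $O(1)$ estimate, not $O(1/K)$. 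The decay does not come from the matrix entries of $\bfQ$ at the edge---it comes from the internal structure of $A$, which by~\eqref{eq:reformulate A} contains the elliptic regularizer $(1+(\beta m)^{-1}\paq^*\paq)^{-1}$. The overflow modes live at Fourier index $\sim K$, where this resolvent is of size $\sim\beta/K^2$; after one $\paq$ or $\paq^*$ is spent, the net gain is $1/K$. The paper makes this precise by computing $A(1-\PiKL)\Lham\PiKL$ and its starred analogue explicitly, factoring each as a bounded momentum operator composed with $(1+\beta^{-1}\paq^*\paq)^{-1/2}\PiKmo$ composed with a bounded position operator, and then proving $\|\PiKmo(1+\beta^{-1}\paq^*\paq)^{-1}\PiKmo\|\le 2\beta/K^2$ via a Schur-complement argument on the five-diagonal representation in the $(G_k)$ basis. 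Incidentally, the constant $1+\sqrt 2$ does not arise from any $G_0,G_1$ anomaly in~\eqref{eq:derivatives G}; it comes from the momentum-operator estimate $\|\beta^{-1}\Pip-\beta^{-2}\Pip\pap^2\|\le(\sqrt 2+1)/\beta$ that appears when $A$ is applied to the position-overflow term.

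Your treatment of $\|\calL^*u_K\|$ is also off: you appear to bound only the component of $\paq u_K$ lying outside $\VKL$, but~\eqref{eq:additional conditions non-conformal} requires the full norm, which is proportional to $\|\paq\PiK\bfone\|/\|\PiK\bfone\|$. The missing identity is $\paq\PiK\bfone=-\paq(1-\PiK)\bfone$ (valid because $\paq\bfone=0$), which makes the \emph{entire} derivative small, not just its high-frequency tail; once this is in place the band structure of $\paq^*$ indeed localizes the computation to the four boundary coefficients $g_{2K-3},\dots,g_{2K}$ and yields the second bracket term in~\eqref{eq:estimate_tlambdagammaM_explicit}.
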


In practice the term $\| (1-\PiKm) \bfone \|$ is very small (it decays faster than any polynomial in~$K$ by Lemma~\ref{lem:approx_PiK}), so that the difference between the two estimates $\lambda_\gamma - \tlambdagammaKL$ scales as $1/K$ and in particular it does not depend on $L$. The proof presented in Appendix~\ref{app:proof discrete hypocoercivity} consists in showing that the assumptions of Corollary~\ref{coro:non-conformal hypocoercivity} hold. Recall also that $\varepsilon, \lambdagamma \sim \min(\gamma, \gamma^{-1})$ by Proposition~\ref{prop:coercivity_scrD}, so that the error term on the right-hand side of~\eqref{eq:estimate_tlambdagammaM_explicit} is uniformly bounded with respect to~$\lambda_\gamma$. This suggests that the relative error on the spectral gap is uniformly bounded with respect to~$\gamma > 0$.

\medskip

According to Theorem~\ref{th:consistency error} and~\eqref{eq:norm Lpm} the following rate of convergence can be deduced for the error $\PhiM - \PiKLnot \Phi$ (which is related to the consistency error $\PiKLnot \calL \PiKLnot \Phi + \PiKLnot R$). 

\begin{prop}
\label{prop:consistency sinus}
The error $\|\PhiKL - \PiKLnot \Phi\|$ is bounded by the approximation error as
\[
\begin{aligned}
  \| \PhiKL - \PiKLnot \Phi \|_\Lmu \leq \frac C\tlambdagammaKL \left[ \sqrt{\frac{L}{\beta}} \left( K-1+\beta \right) \| (1-\PiKL) \Phi \|_\Lmu + \| \calL u_K \|  \| \Phi \|_\Lmu \right].
\end{aligned}
\]
where $\| \calL u_K \|$ decays faster than any polynomial (see \eqref{eq:L u_M} for an explicit computation). Therefore, for any $s \geq 1$, there exists $A_{\gamma, s} \in \bbR_+$ such that, for all $R \in \rmH^s(\mu)$ and $\Phi = -\calL^{-1} \Pinot R$, 
\[
\| \PhiKL - \PiKLnot \Phi \|_\Lmu \leq A_{\gamma, s} K \sqrt L \left(\frac{1}{K^s} + \frac{1}{L^{s/2}}\right)\| R\|_{\Hsmu}. 
\]
\end{prop}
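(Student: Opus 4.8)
The proof combines Theorem~\ref{th:consistency error} (applied with $\Picap = \PiKLnot$ and $u_M = u_K$) with two quantitative estimates that are specific to the present tensor basis: an operator‑norm bound on $\PiKL\calL(1-\PiKL)$, and the rapid decay of $\|\calL u_K\|$.

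\textbf{Bounding $\|\PiKL\calL(1-\PiKL)\|_{\calB(\Lmu)}$.} Since the Hermite functions are eigenfunctions of $\LFD$ and the basis has a product structure in $(q,p)$, the operator $\LFD$ commutes with $\PiK$ and with $\PiL$, hence with $\PiKL$; therefore $\PiKL\calL(1-\PiKL) = \PiKL\Lham(1-\PiKL)$, which is moreover independent of $\gamma$. Using~\eqref{eq:derivatives H} and~\eqref{eq:derivatives G}, the operator $\Lham = \beta^{-1}(\nap^*\naq - \naq^*\nap)$ acts on $e_{k,\ell}$ by shifting $\ell \mapsto \ell\pm 1$, with momentum weight $\beta^{-1}\sqrt{\beta(\ell+1)}$ or $\beta^{-1}\sqrt{\beta\ell}$, and by shifting $k$ within the bandwidth‑$4$ band of the matrix $\bfQ$. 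One then checks which matrix entries survive the sandwich $\PiKL(\cdots)(1-\PiKL)$: either the momentum index is lowered from $\ell = L$ to $\ell = L-1$, carrying a factor $\beta^{-1}\sqrt{\beta L} = \sqrt{L/\beta}$ times an entry of $\bfQ$ between two position indices $\le 2K-2$ (whose largest entries are the first off‑diagonal ones, of modulus at most $K-1$, plus band contributions of size $\beta/4$), or the position index crosses the cutoff $2K-1$, which only involves the small band entries of $\bfQ$ while the momentum weight is at most $\sqrt{L/\beta}$. Summing these contributions yields $\|\PiKL\calL(1-\PiKL)\|_{\calB(\Lmu)} \le \sqrt{L/\beta}\,(K-1+\beta)$, which is~\eqref{eq:norm Lpm}. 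Combined with~\eqref{eq:consistency L2} this gives $\|\PiKL\calL(1-\PiKL)\Phi\|_\Lmu \le \sqrt{L/\beta}\,(K-1+\beta)\,\|(1-\PiKL)\Phi\|_\Lmu$, and plugging this into~\eqref{eq:consistency_error_estimate} produces the first displayed inequality of the proposition.

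\textbf{Decay of $\|\calL u_K\|$.} Because $\lang H_\ell,\bfone\rang = \delta_{\ell,0}$, one has $\PiKL\bfone = \PiK\bfone$, so $u_K = \PiK\bfone/\|\PiK\bfone\|$ depends only on $q$; on such functions $\calL$ acts as $p\,\partial_q$, hence $\calL u_K = \|\PiK\bfone\|^{-1} p\,\partial_q(\PiK\bfone)$. Using $\calL\bfone = 0$ to rewrite $\partial_q(\PiK\bfone) = -\partial_q((1-\PiK)\bfone)$, and the product structure $\mu = \nu\otimes\kappa$ with $\int p^2\,d\kappa = \beta^{-1}$, one gets $\|\calL u_K\|_\Lmu^2 = \beta^{-1}\|\PiK\bfone\|^{-2}\,\|\partial_q(1-\PiK)\bfone\|_\Lnu^2$. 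Expanding $(1-\PiK)\bfone$ on the $G_k$ with $k \ge 2K-1$, using $\|\partial_q G_k\|_\Lnu \lesssim k+\beta$ (from~\eqref{eq:derivatives G}) together with the super‑polynomial decay of the Fourier coefficients $\lang\bfone,G_k\rang_\nu$ of the smooth (constant) function $\bfone$, and $\|\PiK\bfone\|\to 1$, shows that $\|\calL u_K\|$ decays faster than any power of $K$; this is the explicit computation~\eqref{eq:L u_M}.

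\textbf{Conclusion and main obstacle.} For the explicit rate, insert~\eqref{eq:approximation error solution}, which bounds $\|(1-\PiKL)\Phi\|_\Lmu \le \widetilde A_s(K^{-s}+L^{-s/2})\|R\|_{\Hsmu}$ whenever $R\in\rmH^s(\mu)\cap\tLmu$ (valid since $\calLinv$ is bounded on $\rmH^s(\mu)\cap\tLmu$), and $\|\Phi\|_\Lmu = \|\calLinv\Pinot R\|_\Lmu \le \|\calLinv\|\,\|R\|_{\Hsmu}$. Since $\tlambdagammaKL \to \lambdagamma > 0$, the prefactor $C/\tlambdagammaKL$ is bounded by a $\gamma$‑dependent constant for $K,L$ large enough, and the term $\|\calL u_K\|\,\|\Phi\|$, being super‑polynomially small in $K$, is absorbed into $K\sqrt L(K^{-s}+L^{-s/2})\|R\|_{\Hsmu}$. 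This yields $\|\PhiKL - \PiKLnot\Phi\|_\Lmu \le A_{\gamma,s}\,K\sqrt L\,(K^{-s}+L^{-s/2})\,\|R\|_{\Hsmu}$. The delicate point is the operator‑norm estimate~\eqref{eq:norm Lpm}: one must track exactly which entries of the rigidity matrix cross the truncation thresholds in $k$ and in $\ell$, and the momentum weights they carry, in order to recover the clean factor $\sqrt{L/\beta}\,(K-1+\beta)$ rather than a cruder $M$‑dependent bound; the rest is bookkeeping.
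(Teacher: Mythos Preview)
Your proof is correct and follows the same route as the paper: apply Theorem~\ref{th:consistency error}, plug in the operator norm bound~\eqref{eq:norm Lpm} together with~\eqref{eq:consistency L2}, invoke the super-polynomial decay~\eqref{eq:L u_M} of $\|\calL u_K\|$, and conclude via the approximation estimate~\eqref{eq:approximation error solution} and the boundedness of $\calL^{-1}$ on $\rmH^s(\mu)\cap\tLmu$. One small remark: the paper derives~\eqref{eq:norm Lpm} for $\calL_{KL}^{+-}=(1-\PiKL)\calL\PiKL$ rather than for $\PiKL\calL(1-\PiKL)$ as you do, but since $\LFD$ commutes with $\PiKL$ these two operators are adjoints of each other up to a sign and hence share the same norm, so your direct argument is equally valid.
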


The second statement follows from the bounds on the approximation error $\| \Phi - \PiKLnot \Phi \|_\Lmu$ provided by Proposition~\ref{prop:approximation error}, together with the fact that $\calL^{-1}$ is a bounded operator on $\rmH^s(\mu) \cap \tLmu$ (see the discussion at the end of Section~\ref{ss:approximation error}). The total error can thus be bounded as
\begin{equation}
\label{eq:total error}
\begin{aligned}
	\| \PhiKL - \Phi \| &\leq \| \PhiKL - \PiKLnot \Phi \| + \| \Phi - \PiKLnot \Phi \| \\
	&\leq A_{\gamma,s} K \sqrt L \left(\frac{1}{K^s} + \frac{1}{L^{s/2}}\right)\| R\|_{\Hsmu} + \widetilde{A}_s \left(\frac{1}{K^s} + \frac{1}{L^{s/2}}\right) \| R \|_{\Hsmu} \\
	&\leq \widehat A_{\gamma,s} K \sqrt L \left(\frac{1}{K^s} + \frac{1}{L^{s/2}}\right)\|R\|_{\Hsmu}.
\end{aligned}
\end{equation}

\subsection{Numerical results}
\label{s:application langevin}

In this section we call for simplicity consistency error the quantity $\| \PhiKL - \PiKL \Phi \|$. In order to validate the results of Section~\ref{s:discrete convergence} in the non-conformal case studied here, we compute the consistency error and the approximation error $\| \Phi - \PiKL \Phi\|$ as a function of the number $K,L$ of modes and of the friction coefficient $\gamma$. We start by considering an observable which is not very regular; and then turn our attention to the case when $R(q,p) = p$ (which belongs to $\Hsmu$ for any $s \in \bbN$). Solving the Poisson equation associated with this observable allows to predict the self-diffusion coefficient, which can be seen as the magnitude of the effective Brownian motion describing Langevin dynamics over diffusive timescales~\cite{PavStu08}. In all this section we set $\beta = 1$ and $m=1$.

As a sanity check we also verified in the case $V=0$ that the eigenvalues of the rigidity matrix $\bfL$ converge to their analytical expressions provided in~\cite{Risken96}.

\paragraph{Observable nearly in $\rmH^2(\mu)$.}
Fix $\gamma=1$ and consider the observable 
\[
R = \sum_{k \in \bbN, \ell \in \bbN} r_{k\ell} G_k H_\ell, \qquad r_{k\ell} = \max(1,k)^{-5/2} \max(1, \ell)^{-3/2}.
\]
Note that 
\[
\|R\|^2 = \sum_{k \in \bbN, \ell \in \bbN} |r_{k\ell}|^2 < +\infty.
\]
Using~\eqref{eq:derivatives G} and~\eqref{eq:derivatives H} it can be shown that $R$ is in $\rmH^1(\mu)$ but fails to be in $\rmH^2(\mu)$ (the exponents in $r_{k\ell}$ are critical). Note also that $R$ does not have mean~0 with respect to~$\mu$, so that the solution of the saddle point problem~\eqref{eq:saddle point problem} converges to the solution of the Poisson problem with $\Pinot R$ on the right-hand side. A very accurate approximation of the solution $\Phi$, which serves as a reference value, is computed by setting $K = 100$ and $L = 1000$. The errors are plotted in Figure~\ref{fig:H2 errors}.

\begin{figure}[ht!]
\begin{center}
\includegraphics[scale=.5]{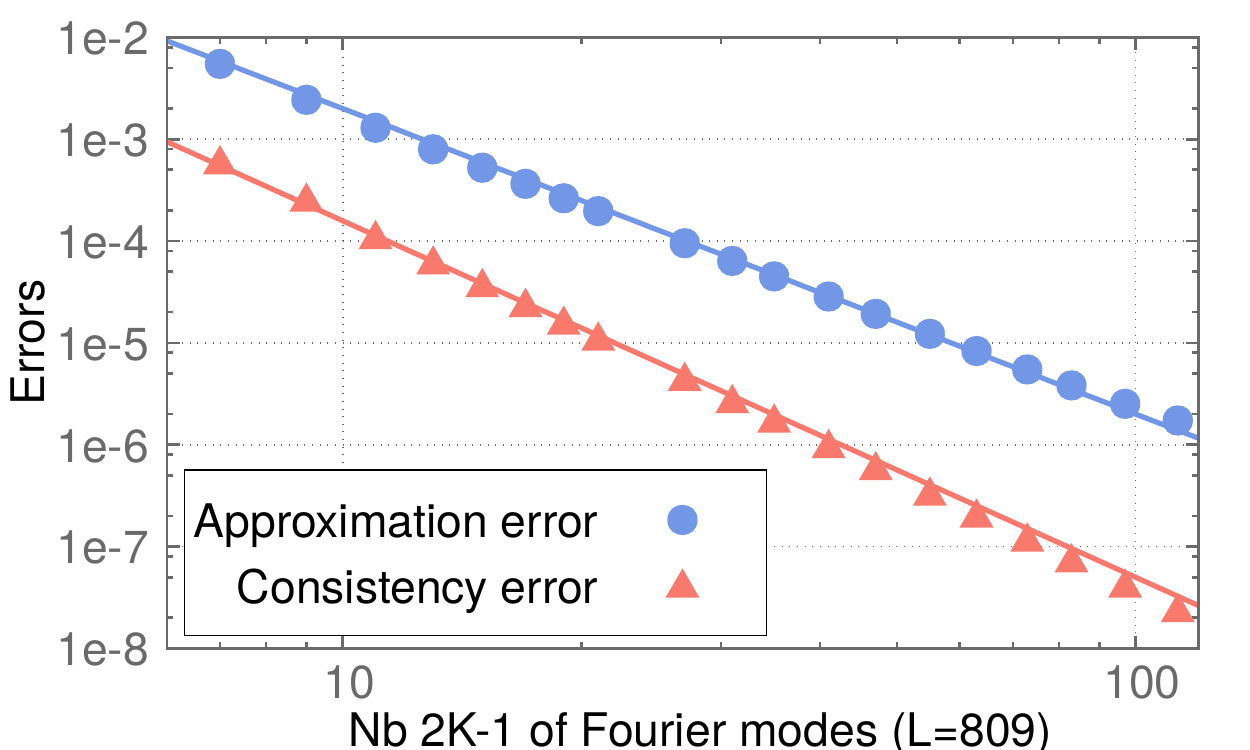}
\includegraphics[scale=.5]{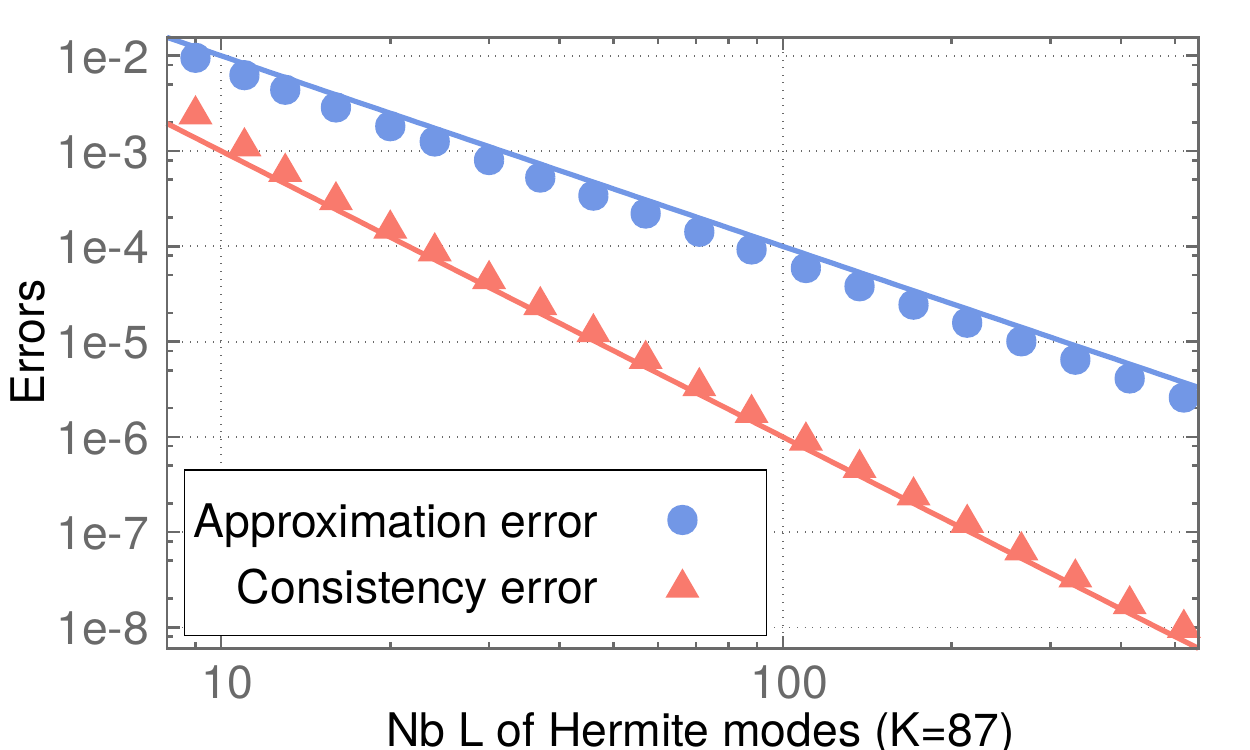}
\end{center}
\caption{Approximation and consistency errors as a function of the number of modes. Left: varying number of Fourier modes for a large number of Hermite modes; the approximation error scales as $K^{-3}$ while the consistency error scales as $K^{-7/2}$. Right: varying number of Hermite modes for a large number of Fourier modes; the approximation error scales as $L^{-2}$ while the consistency error scales as $L^{-3}$.}
\label{fig:H2 errors}
\end{figure}

The polynomial power of the numerically observed decay of the approximation error is directly linked to the regularity of the solution $\Phi$. Here the scalings $K^{-3}$ and $L^{-2}$ suggest that $\Phi, \pap \Phi \in \rmH^3(\mu)$, meaning that in this particular case $\calLinv$ regularizes one derivative of $R$ in position and two in momenta, which is the most that could be expected. Note that the approximation error is therefore much smaller than predicted in~\eqref{eq:approximation error solution}, where we only stated that $\Phi$ is at least as regular as $R$. Moreover, we observe that the consistency error decays faster than the approximation error, as anticipated in Remark~\ref{rmk:consistency error}.

\paragraph{Velocity observable.}

The self-diffusion of a particle subjected to Langevin dynamics in dimension~1 is (see for instance~\cite[Section~5]{Lelievre16} for further background)
\begin{equation}
  \label{eq:green kubo}
  D = \int_0^\infty \Esp \left( p_t p_0 \right) \dt = \lang -\calLinv p, p \rang,
\end{equation}
where the expectation is taken over all initial conditions $(q_0,p_0)\sim \mu$ and for all realizations of the Brownian motion in~\eqref{eq:langevin dynamics}. This transport coefficient can be computed by approximating $\Phi = \calLinv p$ with the Galerkin method described in this article. The accurate reference is here computed by setting $K = 50$ and $L = 100$. We plot on Figure~\ref{fig:velocity errors} the approximation error and the consistency error obtained for the observable $R(q,p) = p$. They decay faster than any polynomial since $p \in \Hsmu$ for any $s \in \bbN$. They are in fact observed to decay exponentially fast with the number of modes. The error on the self-diffusion coefficient therefore also decays faster than any polynomial, in fact exponentially. 

\begin{figure}[ht!]
\begin{center}
\includegraphics[scale=.5]{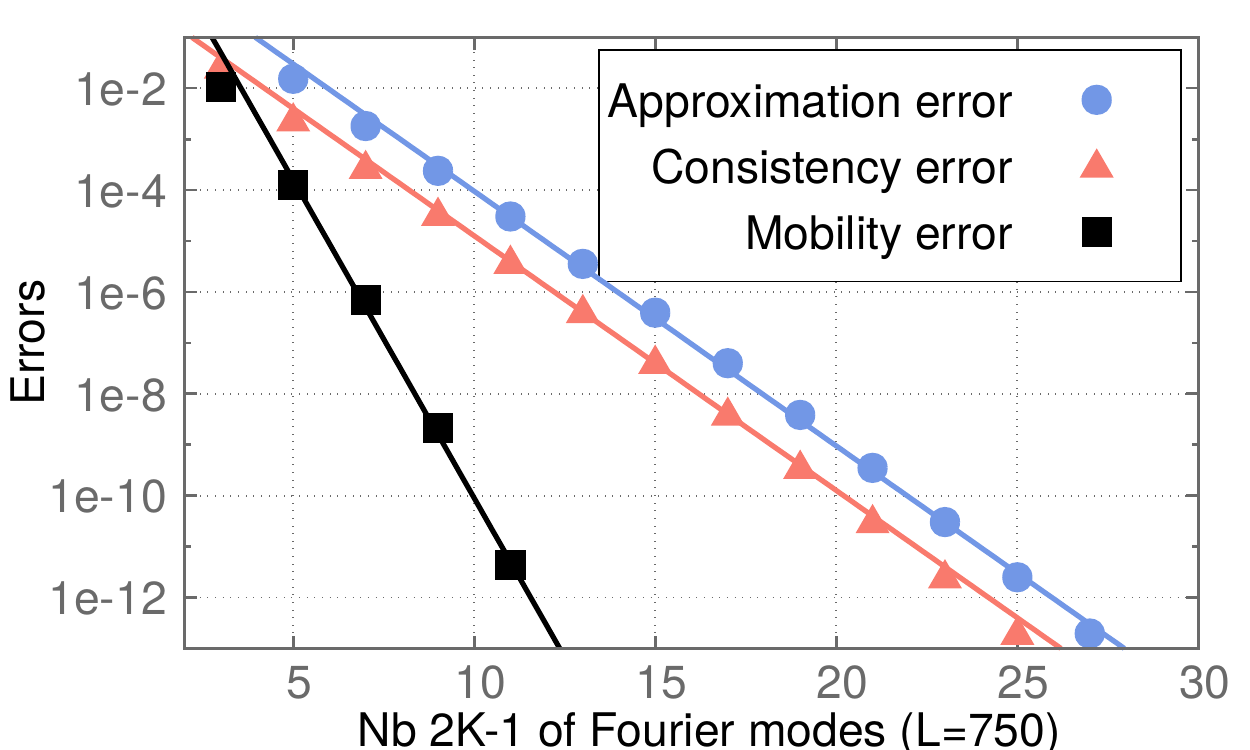}
\includegraphics[scale=.5]{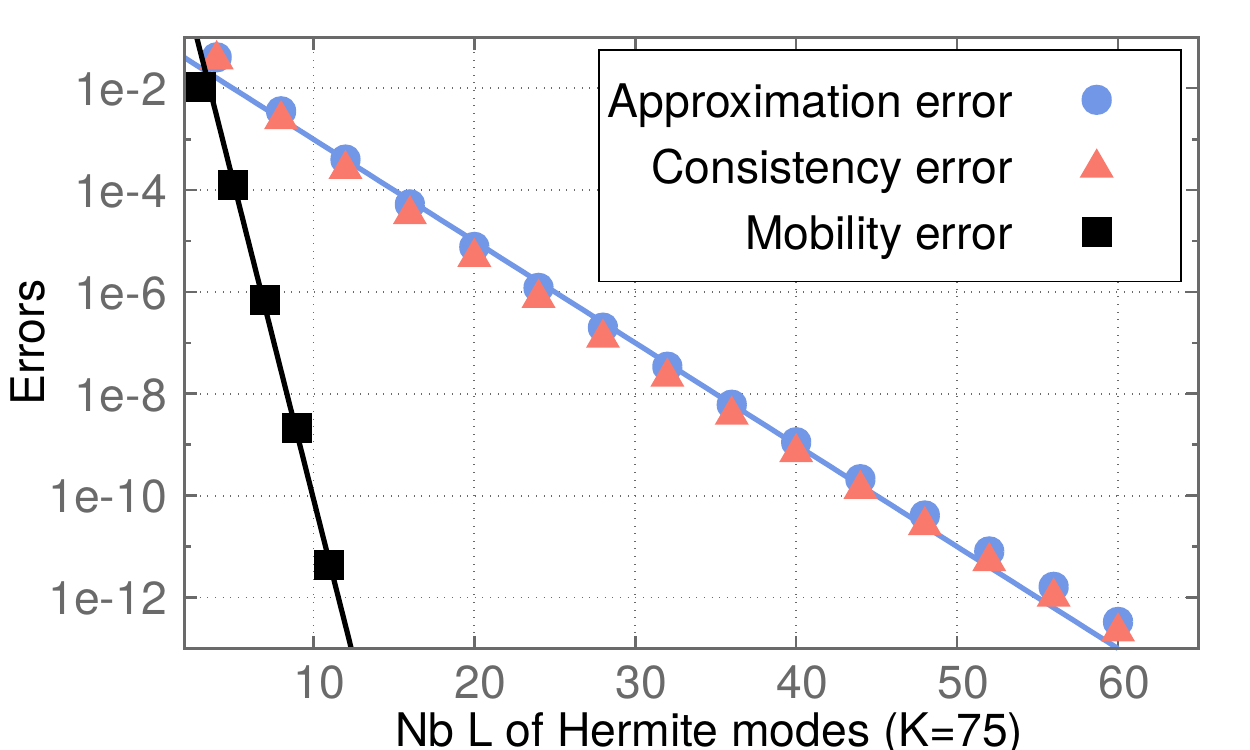}
\end{center}
\caption{Approximation error, consistency error and error on the mobility as a function of the number of Fourier modes (Left) or Hermite modes (Right) for $\gamma=1$. Logarithmic units are used on the ordinate axis. When the number of Hermite modes is large, the error on the mobility scales as $10^{-2.5 K}$, while the approximation and consistency errors both scale as $10^{-K}$. When the number of Fourier modes is large, the error on the mobility scales as $10^{-1.25 L}$, while the approximation and consistency errors both scale as $10^{-0.2 L}$.}
\label{fig:velocity errors}
\end{figure}

As an illustration of our approach, we plot the value of the self-diffusion as a function of~$\gamma$ in Figure~\ref{fig:mobility fct gamma}, as already done in~\cite{PavStu08} using Monte-Carlo techniques and in~\cite{PavVog08} using a very similar spectral method. We indeed retrieve the scaling $D \sim \gamma^{-1}$ proved in~\cite{PavStu08}. This computation can be done in a matter of seconds as it involves a single inversion of a sparse matrix of size $KL = 5000$ for each value of the friction $\gamma$. It is thus much faster that a standard Monte-Carlo simulation. This approach however becomes intractable when the dimension increases.

\begin{figure}[ht!]
\begin{center}
\includegraphics[scale=.5]{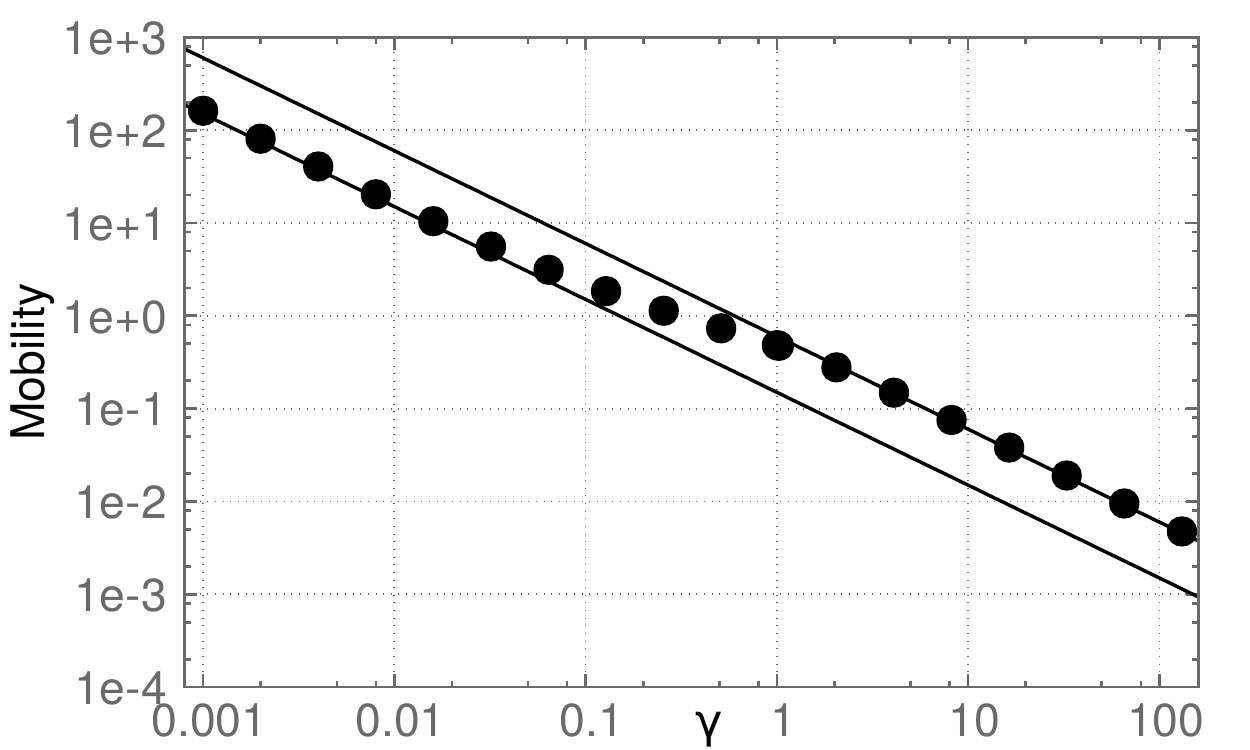}
\end{center}
\caption{Self-diffusion as a function of the friction~$\gamma$. It scales as $\gamma^{-1}$ both for small $\gamma$ (with prefactor $0.15$) and large $\gamma$ (with prefactor $0.6$).}
\label{fig:mobility fct gamma}
\end{figure}

\paragraph{Estimates on the spectral gap.}
In order to illustrate the statements of Proposition~\ref{prop:hypocoercivity sinus}, we compute the relative error between the spectral gap of $\calL$ (approximated using a very large discretization basis) and the spectral gap of the matrix $\widehat \bfL$; see Figure~\ref{fig:spectral gap wrt gamma}. The spectral gap is close to the value $\min(\gamma, \gamma^{-1})$ obtained when $V=0$ (see~\cite{Kozlov89}), with deviations essentially around $\gamma=1$. Note on Figure~\ref{fig:spectral gap wrt KL} that the relative error on the spectral gap decays exponentially with $K$ and $L$. Let us also emphasize that, as suggested by~\eqref{eq:estimate_lambda_gamma_M_unif}, the relative error on the spectral gap is bounded uniformly with respect to~$\gamma$ for any $K,L$. We also observe that in the overdamped limit $\gamma \to \infty$ the relative error depends only on the discretization accuracy in the position variable. This is due to the fact that the resolvent $\calL^{-1}$ converges in this regime to an operator acting only on the position variables~\cite{Leimkuhler16}.

\begin{figure}[ht!]
\begin{center}
\includegraphics[scale=.5]{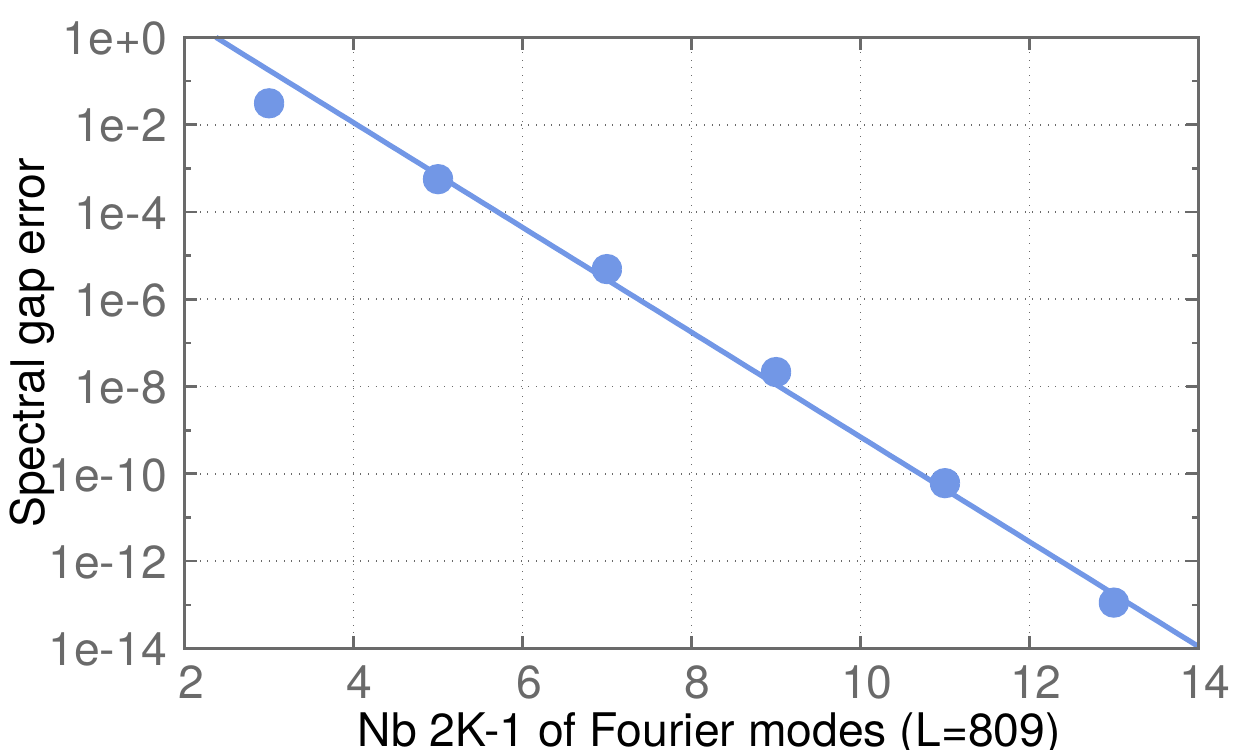}
\includegraphics[scale=.5]{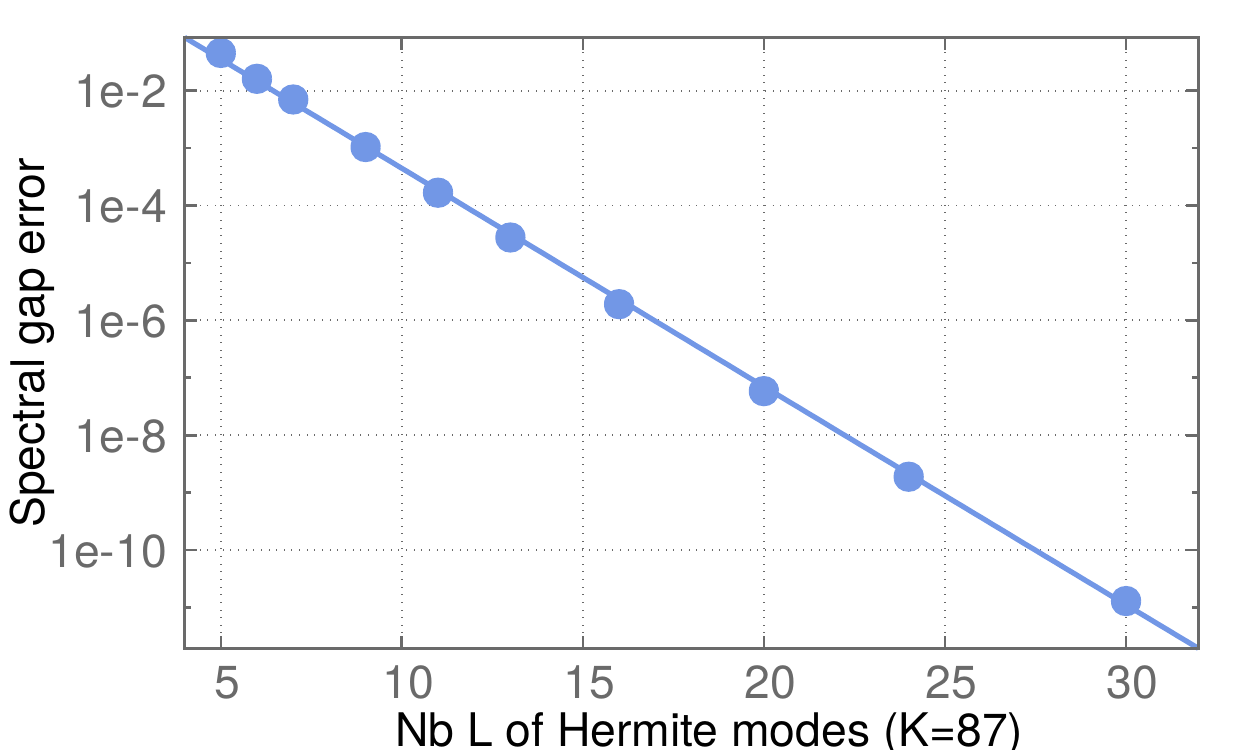}\\
\includegraphics[scale=.5]{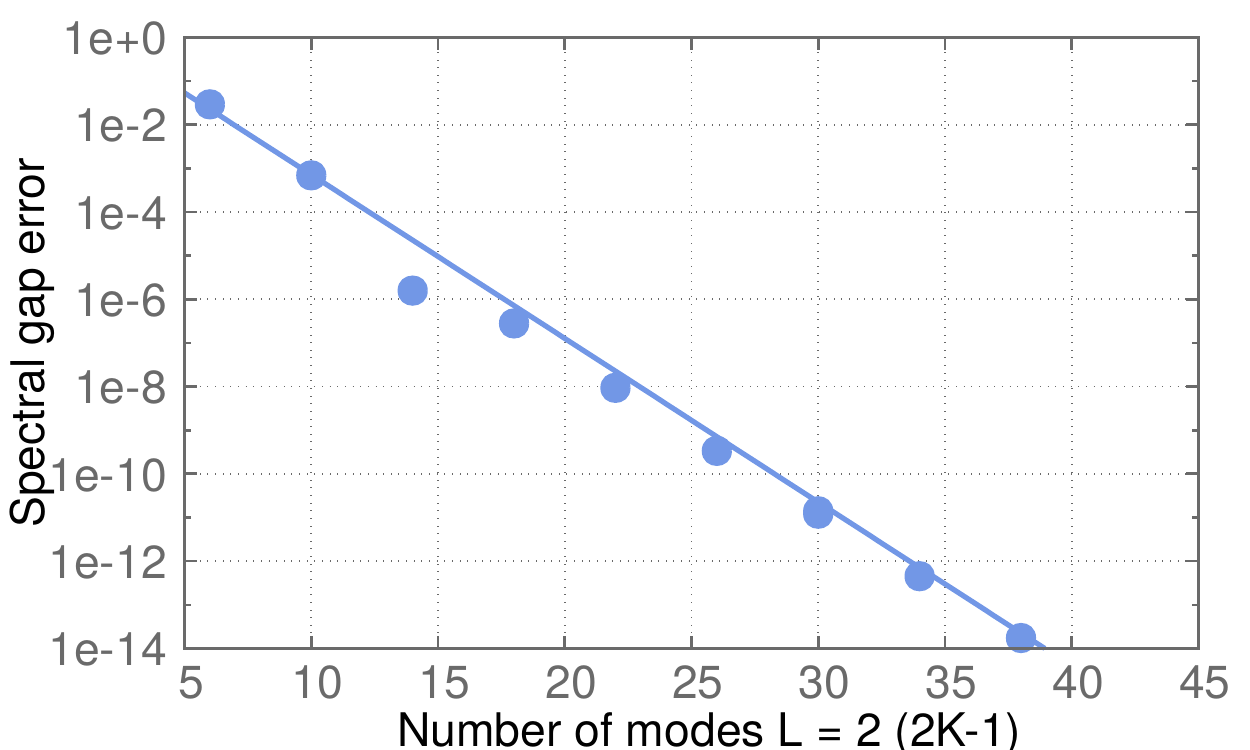}
\end{center}
\caption{Error on the spectral gap as a function of the size of the basis in three cases for $\gamma = 1$. For a large number of Hermite modes the error scales approximatively as $10^{-1.2 (2K-1)}$ (top left); for a large number of Fourier modes it scales approximatively as $10^{-0.32 L}$ (top right); and for $L = 2 (2K-1)$ it scales approximatively as $10^{-3.8 L}$ (bottom).}
\label{fig:spectral gap wrt KL}
\end{figure}

\begin{figure}[ht!]
\begin{center}
\includegraphics[scale=.5]{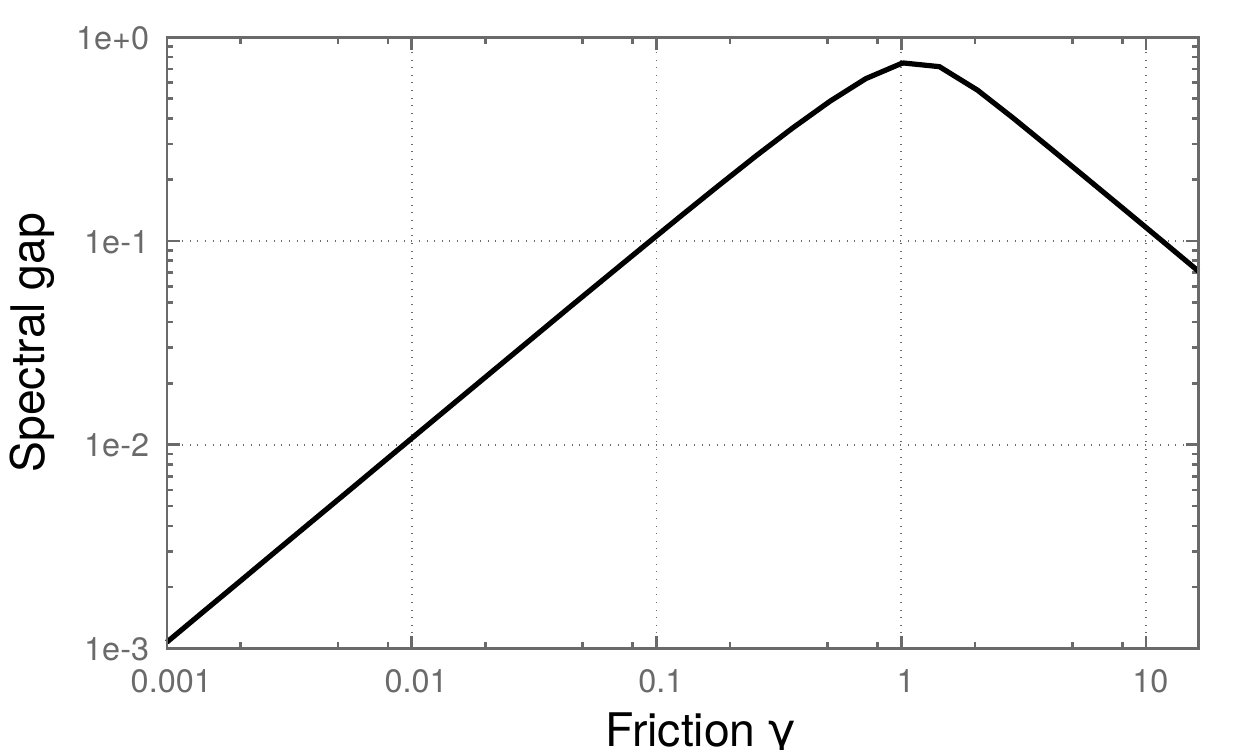}
\includegraphics[scale=.5]{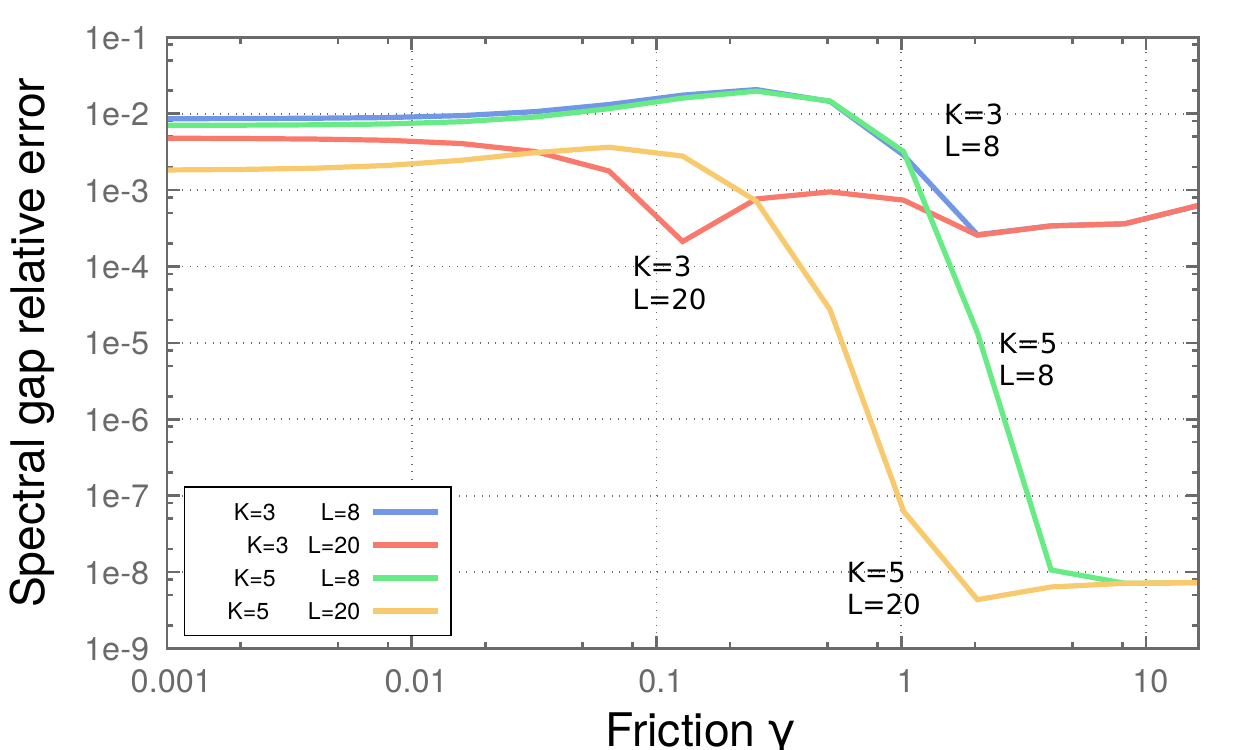}
\end{center}
\caption{Left: Spectral gap as a function of the friction~$\gamma$. Right: Relative error on the spectral gap as a function of~$\gamma$ for several couples $K,L$. Note that the curve corresponding to $K=3, L=8$ coincides with $K=5, L=8$ for $\gamma$ small and with $K=3, L=20$ for $\gamma$ large.}
\label{fig:spectral gap wrt gamma}
\end{figure}

\section*{Acknowledgements}

We thank Alexandre Ern, Tony Leli\`evre and François Madiot (CERMICS), as well as Greg Pavliotis and Urbain Vaes (Imperial College) for helpful discussions. This work is supported by the Agence Nationale de la Recherche under grant ANR-14-CE23-0012 (COSMOS); as well as the European Research Council under the European Union's Seventh Framework Programme (FP/2007-2013) -- ERC Grant Agreement number 614492. We also benefited from the scientific environment of the Laboratoire International Associ\'e between the Centre National de la Recherche Scientifique and the University of Illinois at Urbana-Champaign. Finally, we acknowledge the support from the International Centre for Theoretical Sciences (ICTS) for the program \emph{Non-equilibrium statistical physics} (ICTS/Prog-NESP/2015/10).

\appendix

\section{Proof of Theorem~\ref{th:hypocoercivity} ($\Lmu$ hypocoercivity)}
\label{app:proof hypocoercivity}

We recall in this section the proof of Theorem~\ref{th:hypocoercivity}, as presented in~\cite{Dolbeault09,Dolbeault15}. We start with the proofs of the technical results presented at the end of Section~\ref{s:equili}.

\begin{proof}[Proof of Lemma~\ref{lemma:equivalent norms}]
  Consider $\varphi \in \Pi_0 \core$. A simple computation shows that 
  \begin{equation}
    \label{eq:LhamPip}
  \Lham \Pip = \frac1\beta \naq \nap^* \Pip = \left(\frac{p}{m}\right)^\top \naq \Pip,
  \end{equation}
  which immediately implies that $\Lham \Pip \varphi$ has average~0 with respect to~$\kappa(\dd p)$ for any $q \in \calD$. Therefore, $\Pip \Lham \Pip = 0$, which implies $A = A(1-\Pip)$.
  
  By definition of the operator $A$, it also holds
  \[
  A \varphi + (\Lham \Pip)^* (\Lham \Pip) A \varphi = (\Lham \Pip)^* \varphi.
  \]
  This identity immediately implies that $\Pip A = A$. 
  Taking the scalar product with $A \varphi$, we obtain, using $\Lham A = \Lham \Pip A = (1-\Pip) \Lham A$:
  \begin{equation}
    \begin{aligned}
      \| A \varphi \|^2 + \| \Lham A \varphi \|^2 &= \lang \Lham A \varphi, \varphi \rang = \lang \Lham A \varphi, (1-\Pip) \varphi \rang \\
      & \leq \| (1-\Pip) \varphi \| \, \| \Lham A \varphi \| \\
      & \leq \frac 1 4 \| (1-\Pip) \varphi \|^2 + \| \Lham A \varphi \|^2.
    \end{aligned}
  \end{equation}
  The last inequality gives $\| A \varphi \| \leq \|(1-\Pi)\varphi \|/2$, while the second one implies that $\| \Lham A \varphi \| \leq \| (1-\Pip) \varphi \|$. The conclusion is finally obtained by density of~$\core$ in~$\Lmu$.
\end{proof}

The key element to prove Proposition~\ref{prop:coercivity_scrD} is the following coercivity estimates, respectively called ``microscopic'' and ``macroscopic'' coercivity in~\cite{Dolbeault09,Dolbeault15}. 

\begin{prop}[Coercivity properties]
  The operators $\LFD$ and $\Lham \Pip$ satisfy the following coercivity properties:
  \begin{equation}
    \label{eq:micro coercivity}
    \forall \varphi \in \core, \qquad - \lang \LFD \varphi, \varphi \rang \geq \frac 1 m \| (1 - \Pip) \varphi \|^2,
\end{equation}
  \begin{equation}
    \label{eq:macro coercivity}
    \forall \varphi \in \core \cap \tLmu, \qquad \| \Lham \Pip \varphi \|^2 \geq \frac {\Cnu}{\beta m} \| \Pip \varphi \|^2,
  \end{equation}
  where $\Cnu$ is defined in~\eqref{eq:poincare nu}. As a corollary, the following inequality holds in the sense of symmetric operators on $\tLmu$:
  \begin{equation}
    \label{eq:coercivity A Lham}
    A \Lham \Pip \geq \lambdaham \Pip, \qquad \lambdaham = 1 - \left(1 + \frac {\Cnu}{\beta m}\right)^{-1} > 0.
  \end{equation}
\end{prop}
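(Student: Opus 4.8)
The plan is to prove the three estimates in turn, the corollary~\eqref{eq:coercivity A Lham} being a functional-calculus consequence of~\eqref{eq:macro coercivity}. For the microscopic bound~\eqref{eq:micro coercivity}, I would start from the reformulation $\LFD = -\betainv\nap^*\nap$ recorded in~\eqref{eq:reformulate Leq}, which gives $-\lang\LFD\varphi,\varphi\rang = \betainv\|\nap\varphi\|^2$ for $\varphi\in\core$. Since $\Pip\varphi$ is, at fixed $q$, the conditional average of $\varphi$ against the Gaussian marginal $\kappa(\dd p)$, one has $\nap\varphi=\nap(1-\Pip)\varphi$; applying the Gaussian Poincaré inequality fiberwise in $q$ — the measure $\kappa$ has covariance $(m/\beta)\bfI$, hence Poincaré constant $m/\beta$ — yields $\|(1-\Pip)\varphi\|^2\le (m/\beta)\|\nap\varphi\|^2$. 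Combining the two relations gives exactly $-\lang\LFD\varphi,\varphi\rang\ge m^{-1}\|(1-\Pip)\varphi\|^2$.

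For the macroscopic bound~\eqref{eq:macro coercivity}, I would use the identity $\Lham\Pip=(p/m)^\top\naq\Pip$ from~\eqref{eq:LhamPip}. Writing $g=\Pip\varphi$, a function of $q$ only that satisfies $\int_\calD g \, \dd\nu=\int_\calE\varphi \, \dd\mu=0$ because $\varphi\in\tLmu$, and using that $q$ and $p$ are independent under $\mu=\nu\otimes\kappa$ with $\int_{\bbR^D} p_ip_j\,\kappa(\dd p)=(m/\beta)\delta_{ij}$, one computes
\[
\|\Lham\Pip\varphi\|^2=\int_\calD\int_{\bbR^D}\left|\left(\frac pm\right)^\top\naq g\right|^2\kappa(\dd p)\,\nu(\dd q)=\frac{1}{\beta m}\|\naq g\|_\Lnu^2.
\]
The Poincaré inequality~\eqref{eq:poincare nu} for $\nu$, applicable since $g$ has zero $\nu$-average, then gives $\|\naq g\|_\Lnu^2\ge\Cnu\|g\|_\Lnu^2=\Cnu\|\Pip\varphi\|^2$, which is~\eqref{eq:macro coercivity}.

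For the corollary, set $B=\Lham\Pip$. Because $\Pip$ is a projector, $B=B\Pip$, so $B^*B=\Pip B^*B\Pip$ is a nonnegative self-adjoint operator whose range lies in $\rmIm\,\Pip$ and which annihilates $\rmKer\,\Pip$; moreover~\eqref{eq:macro coercivity} says $B^*B\ge(\Cnu/\beta m)\Pip$ as quadratic forms on $\tLmu$. The key observation is that $A\Lham\Pip=(1+B^*B)^{-1}B^*B=r(B^*B)$ with $r(x)=x/(1+x)$, a bounded symmetric nonnegative operator. For $\varphi\in\core\cap\tLmu$ one has $r(B^*B)\varphi=r(B^*B)\Pip\varphi\in\rmIm\,\Pip$, so $\lang A\Lham\Pip\varphi,\varphi\rang=\lang r(B^*B)\Pip\varphi,\Pip\varphi\rang$; restricting to the invariant subspace $\rmIm\,\Pip$, on which $B^*B$ has spectrum in $[\Cnu/\beta m,+\infty)$, and using that $r$ is increasing, the spectral theorem gives $\lang r(B^*B)\Pip\varphi,\Pip\varphi\rang\ge r(\Cnu/\beta m)\|\Pip\varphi\|^2$. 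Since $r(\Cnu/\beta m)=1-(1+\Cnu/\beta m)^{-1}=\lambdaham$, this is~\eqref{eq:coercivity A Lham}.

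The two Poincaré computations are routine. The hard part will be the last step, where one must handle the unbounded operator $B=\Lham\Pip$ with care: justifying the functional-calculus identity $A\Lham\Pip=r(B^*B)$ on the relevant domain, and exploiting the structural fact $B=B\Pip$ so that $B^*B$ leaves $\rmIm\,\Pip$ invariant and annihilates $\rmKer\,\Pip$ — without this, the implication ``$B^*B\ge c\Pip\Rightarrow r(B^*B)\ge r(c)\Pip$'' would fail, since $\Pip$ is a projector rather than the identity. An equivalent route that avoids explicit spectral calculus is to solve the resolvent equation $(1+B^*B)w=B^*B\varphi$ for $w=A\Lham\Pip\varphi$ and bound $\lang w,\varphi\rang$ directly, but it relies on the same invariance property.
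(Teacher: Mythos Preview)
Your proposal is correct and follows essentially the same route as the paper: Gaussian Poincar\'e in~$p$ fiberwise for~\eqref{eq:micro coercivity}, the identity $\|\Lham\Pip\varphi\|^2=(\beta m)^{-1}\|\naq\Pip\varphi\|_\Lnu^2$ combined with the Poincar\'e inequality~\eqref{eq:poincare nu} for~\eqref{eq:macro coercivity}, and the spectral-calculus argument with $r(x)=x/(1+x)$ applied to $B^*B$ for the corollary. Your explicit remark that $B^*B$ leaves $\rmIm\,\Pip$ invariant and annihilates $\rmKer\,\Pip$ is a welcome clarification of a step the paper treats tersely.
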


\begin{proof}
The inequality~\eqref{eq:micro coercivity} directly results from a Poincaré inequality for the Gaussian measure~$\kappa$ (see~\cite{Beckner89}), the position $q$ being seen as a parameter. Indeed, for a given $\varphi \in \core$,
\begin{equation}
  \forall q \in \calD, \qquad \int_\bbRD \left| \nap \varphi(q,p) \right|^2 \, \kappa(\dd p) \geq \frac \beta m \int_\bbRD \left| (1 - \Pip) \varphi(q,p) \right|^2 \, \kappa(\dd p)
\end{equation}
Integrating against $\nu$ and noting that $- \lang \LFD \varphi, \varphi \rang = \betainv \| \nap \varphi \|^2$ leads to the desired inequality.

To prove~\eqref{eq:macro coercivity}, we use~\eqref{eq:LhamPip}, which leads to
\begin{equation}
  \| \Lham \Pip \varphi \|_\Lmu^2 = \frac 1 {\beta m} \| \nabla_q \Pip \varphi \|_\Lnu^2.
\end{equation}
The conclusion then follows from the Poincaré inequality~\eqref{eq:poincare nu}, since, for $\varphi \in \core \cap \tLmu$, the function $\Pip \varphi$ has average~0 with respect to~$\nu$ (namely, $\Esp_\nu[\Pip \varphi] = \Esp_\mu[\varphi] = 0$).

The macroscopic coercivity~\eqref{eq:macro coercivity} allows to write $(\Lham \Pip)^* (\Lham \Pip) \geq \frac {\Cnu}{\beta m} \Pip$ in the sense of symmetric operators on~$\tLmu$. Moreover,
\[
A \Lham \Pip = \left[ 1+(\Lham \Pip)^* (\Lham \Pip) \right]^{-1} (\Lham \Pip)^* (\Lham \Pip).
\]
Since $(\Lham \Pip)^* (\Lham \Pip)$ is self-adjoint and the function $x \mapsto x/(1+x) = 1 - 1/(1+x)$ is increasing, the inequality~\eqref{eq:coercivity A Lham} follows by spectral calculus.
\end{proof}

Another technical argument is the boundedness of certain operators, which appear in the proof of Proposition~\ref{prop:coercivity_scrD}.

\begin{lemma}
  \label{lemma:momentum operators}
  For any $\ell \in \bbN^*$, $i \in \{1,2,\dots,D\}$ and $\varphi \in \Lmu$,
  \[
  \| \Pip \papi^\ell \varphi \|_\Lnu \leq \sqrt{\left(\frac{\beta}{m}\right)^\ell \ell!} \, \| (1 - \Pip) \varphi \|.
  \]
  In particular, $\left\| \Pip \papi^\ell \right\| =  \left\| \left(\papi^*\right)^\ell \Pip \right\| \leq \sqrt{\beta^\ell \ell!}$. 
\end{lemma}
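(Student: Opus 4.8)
The plan is to pass to the one‑dimensional Hermite basis in the variable $p_i$ and to exploit the fact that integrating a $p_i$‑derivative against the Gaussian $\kappa$ amounts to testing $\varphi$ against a creation operator applied to the constant function. Fix $i \in \{1,\dots,D\}$ and recall that the normalized Hermite functions $(H_\ell)_{\ell \in \bbN}$ adapted to $\kappa$ in the variable $p_i$ satisfy $H_0 \equiv 1$ together with the creation relation $\papi^* H_\ell = \sqrt{(\beta/m)(\ell+1)}\,H_{\ell+1}$ (the analogue of~\eqref{eq:derivatives H} for general mass $m$), hence $(\papi^*)^\ell H_0 = \sqrt{(\beta/m)^\ell\,\ell!}\,H_\ell$; moreover $\|H_\ell\|_\Lkappa = 1$ and $\lang H_\ell, 1 \rang_\Lkappa = 0$ for $\ell \geq 1$, where $H_\ell$ is viewed as a function on $\bbRD$ depending only on $p_i$.

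I would first treat $\varphi \in \core$. For fixed $q \in \calD$, integrating by parts $\ell$ times in $p_i$ (the boundary terms vanish thanks to the compact support of $\varphi$ and the Gaussian decay of $\kappa$) gives
\[
\int_\bbRD \papi^\ell \varphi(q,p) \, \kappa(\dd p) = \big\lang \varphi(q,\cdot), (\papi^*)^\ell H_0 \big\rang_\Lkappa = \sqrt{(\beta/m)^\ell\,\ell!}\,\big\lang \varphi(q,\cdot), H_\ell \big\rang_\Lkappa .
\]
Since for fixed $q$ the function $\Pip \varphi$ is constant in $p$ and $H_\ell \perp 1$ in $\Lkappa$ when $\ell \geq 1$, one may replace $\varphi$ by $(1-\Pip)\varphi$ on the right‑hand side, so that
\[
(\Pip \papi^\ell \varphi)(q) = \sqrt{(\beta/m)^\ell\,\ell!}\,\big\lang (1-\Pip)\varphi(q,\cdot), H_\ell \big\rang_\Lkappa .
\]
Applying the Cauchy–Schwarz inequality in $\Lkappa$ (with $\|H_\ell\|_\Lkappa = 1$) and integrating over $q$ against $\nu$ then yields
\[
\| \Pip \papi^\ell \varphi \|_\Lnu^2 \leq (\beta/m)^\ell\,\ell! \int_\calD \| (1-\Pip)\varphi(q,\cdot) \|_\Lkappa^2 \, \nu(\dd q) = (\beta/m)^\ell\,\ell!\,\| (1-\Pip)\varphi \|^2 ,
\]
which is the announced bound. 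The estimate extends to arbitrary $\varphi \in \Lmu$ by density of $\core$ in $\Lmu$, the right‑hand side of the displayed identity for $\Pip \papi^\ell \varphi$ being continuous in $\varphi$; this also provides the meaning of $\Pip \papi^\ell \varphi$ for a general $\varphi \in \Lmu$, as the natural bounded extension.

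For the operator‑norm statement, taking the supremum over $\varphi$ with $\| \varphi \| = 1$ and using $\| (1-\Pip)\varphi \| \leq \| \varphi \|$ gives the bound on $\| \Pip \papi^\ell \|$; the identity $\| \Pip \papi^\ell \| = \| (\papi^*)^\ell \Pip \|$ follows because $\Pip$ is self‑adjoint, so $(\Pip \papi^\ell)^* = (\papi^*)^\ell \Pip$, and an operator and its adjoint have the same norm. The argument is otherwise routine; the only points needing a little care are the justification of the iterated integration by parts on the core (hence the choice to first argue on $\core$ and then extend by density, so that the estimate is meaningful for every $\varphi \in \Lmu$) and the bookkeeping of the mass‑dependent normalization of the Hermite functions entering the creation relation.
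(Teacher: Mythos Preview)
Your proof is correct and follows essentially the same approach as the paper: work on the core, move the $\papi^\ell$ onto the constant via the adjoint to obtain $(\papi^*)^\ell \mathbf{1} = \sqrt{(\beta/m)^\ell \ell!}\,H_\ell$, replace $\varphi$ by $(1-\Pip)\varphi$ (you use orthogonality of $H_\ell$ to constants, the paper uses $\papi^\ell \Pip\varphi = 0$ directly), apply Cauchy--Schwarz in $\Lkappa$, and integrate in~$q$. The only cosmetic difference is the order in which the integration by parts and the insertion of $(1-\Pip)$ are performed.
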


\begin{proof}
  Fix $\varphi \in \core$. For $q \in \calD$,
  \[
  \left(\Pip \papi^n \varphi\right) (q) = \int_{\bbR^D} \left(\papi^n (1-\Pip) \varphi\right)(q,p) \, \kappa(\dd p) = \int_{\bbR^D} (1-\Pip) \varphi(q,p) \, (\papi^*)^n \bfone \, \kappa(\dd p).
  \]
Denoting by $H_\ell(p_i) = (m/\beta)^{\ell/2} \ell!^{-1/2} (\papi^*)^\ell \bfone$ the Hermite polynomials in the variable~$p_i$ (which, we recall, are such that $\|H_\ell\|_{\Lkappa}=1$), 
a Cauchy--Schwarz inequality shows that
\[
\begin{aligned}
  \| \Pip \papi^\ell \varphi \|_\Lnu^2 &\leq \int_\calD \left( \int_{\bbR^D} | (1-\Pip) \varphi(q,p) | \, \left| \sqrt{\left(\frac{\beta}{m}\right)^\ell \ell!} H_\ell(p_i) \right| \kappa(\dd p) \right)^2 \nu(\dd q) \\
  &\leq \left(\frac{\beta}{m}\right)^\ell \ell! \int_\calD \| (1-\Pip) \varphi(q,\cdot) \|_\Lkappa^2 \| H_\ell \|_\Lkappa^2 \nu(\dd q) = \left(\frac{\beta}{m}\right)^\ell \ell! \| (1-\Pip) \varphi \|^2,
\end{aligned}
\]
which gives the claimed result.
\end{proof}

\begin{prop}[Boundedness of auxiliary operators]
  \label{prop:P4}
  There exist $R_{\rm ham}>0$ such that
  \begin{equation}
    \label{eq:P4}
    \forall \varphi \in \core, 
    \qquad
    \left\{ \begin{aligned}
    \|A \Lham (1-\Pip)\varphi \| & \leq R_{\rm ham} \|(1-\Pip) \varphi \|, \\ 
    \| A \LFD \varphi \| & \leq \frac{1}{2m}\|(1-\Pip) \varphi \|.
    \end{aligned} \right.
  \end{equation}
\end{prop}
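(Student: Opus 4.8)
The plan is to work from the explicit representation $A = \bigl(1+(\Lham\Pip)^*(\Lham\Pip)\bigr)^{-1}(\Lham\Pip)^*$ of~\eqref{eq:entropy_functional}, using two structural facts. First, $\Lham^* = -\Lham$ and $\Pip^* = \Pip$ give $(\Lham\Pip)^* = -\Pip\Lham$. Second, by~\eqref{eq:reformulate Leq} and~\eqref{eq:derivatives H} one has $\Lham = \Lham^+ + \Lham^-$ with $\Lham^+ = \frac1\beta\nap^*\naq$ raising the Hermite degree in the momentum by one and $\Lham^- = -\frac1\beta\naq^*\nap$ lowering it by one; in particular $\Pip\Lham^+ = 0 = \Pip\nap^*$, so that $(\Lham\Pip)^* = -\Pip\Lham^- = \frac1\beta\naq^*\Pip\nap$ (since $\Pip$ commutes with $\naq^*$). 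I will also use that $\nap,\nap^*$ commute with $\naq,\naq^*$ and that $[\partial_{p_i},\partial_{p_j}^*] = \frac\beta m\delta_{ij}$.

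\emph{Second estimate (the easy one).} From $\nap(\nap^*\nap) = (\nap^*\nap+\frac\beta m)\nap$ and $\Pip\nap^* = 0$ one gets the operator identity $\Pip\nap\nap^*\nap = \frac\beta m\Pip\nap$, hence $\Pip\nap\LFD = -\frac1\beta\Pip\nap\nap^*\nap = -\frac1m\Pip\nap$, and therefore
\[
(\Lham\Pip)^*\LFD = \tfrac1\beta\naq^*\,\Pip\nap\LFD = -\tfrac1m\,(\Lham\Pip)^*.
\]
Composing with $\bigl(1+(\Lham\Pip)^*(\Lham\Pip)\bigr)^{-1}$ gives $A\LFD = -\frac1m A$, so Lemma~\ref{lemma:equivalent norms} yields $\|A\LFD\varphi\| = \frac1m\|A\varphi\| \le \frac1{2m}\|(1-\Pip)\varphi\|$.

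\emph{First estimate, reduction.} Set $B := (\Lham\Pip)^*(\Lham\Pip)\ge 0$. Using $(\Lham\Pip)^* = -\Pip\Lham$ one has $A\Lham(1-\Pip) = -(1+B)^{-1}\Pip\Lham^2(1-\Pip)$. A grading argument in Hermite degree shows that among the four terms of $\Lham^2 = (\Lham^+)^2+\Lham^+\Lham^-+\Lham^-\Lham^++(\Lham^-)^2$ only $(\Lham^-)^2$ can carry functions of degree $\ge 1$ into the degree-$0$ subspace, while $(\Lham^-)^2\Pip = 0$; thus $\Pip\Lham^2(1-\Pip)\varphi = \Pip(\Lham^-)^2\varphi$. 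Writing $(\Lham^-)^2 = \frac1{\beta^2}\sum_{i,j}\partial_{q_i}^*\partial_{q_j}^*\partial_{p_i}\partial_{p_j}$ and pulling $\Pip$ past the position operators,
\[
A\Lham(1-\Pip)\varphi = -\frac1{\beta^2}\sum_{i,j=1}^D (1+B)^{-1}\partial_{q_i}^*\partial_{q_j}^*\,h_{ij},
\qquad h_{ij} := \Pip\,\partial_{p_i}\partial_{p_j}\varphi,
\]
with each $h_{ij}$ a function of $q$ only. Integrating by parts twice in $p$, as in the proof of Lemma~\ref{lemma:momentum operators}, gives $h_{ij}(q) = \langle (1-\Pip)\varphi(q,\cdot),\,\partial_{p_j}^*\partial_{p_i}^*\bfone\rangle_\Lkappa$, and since $\partial_{p_j}^*\partial_{p_i}^*\bfone$ has $\Lkappa$-norm at most $\sqrt2\,\beta/m$ this yields $\|h_{ij}\|_\Lnu \le \sqrt2\,\frac\beta m\,\|(1-\Pip)\varphi\|$.

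\emph{First estimate, the main obstacle.} It remains to bound $\bigl\|(1+B)^{-1}\partial_{q_i}^*\partial_{q_j}^*\bigr\|_{\calB(\Lnu)}$. The restriction of $B$ to functions of $q$ equals $\frac1{\beta m}\naq^*\naq$ (this is the explicit formula~\eqref{eq:reformulate A}; alternatively $(\Lham\Pip)^*(\Lham\Pip)g = -\Pip\Lham\bigl((p/m)^\top\naq g\bigr) = \frac1{\beta m}\naq^*\naq g$ directly from~\eqref{eq:LhamPip}), so by self-adjointness of $(1+B)^{-1}$ the above norm equals $\bigl\|\partial_{q_i}\partial_{q_j}\bigl(1+\frac1{\beta m}\naq^*\naq\bigr)^{-1}\bigr\|_{\calB(\Lnu)}$, i.e.\ an $\rmH^2(\nu)$ elliptic-regularity bound for the weighted Laplacian $\naq^*\naq$. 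For $g := \bigl(1+\frac1{\beta m}\naq^*\naq\bigr)^{-1}f$, testing against $g$ gives $\|g\|_\Lnu\le\|f\|_\Lnu$, $\|\naq g\|_\Lnu^2\le\beta m\|f\|_\Lnu^2$ and $\|\naq^*\naq g\|_\Lnu\le 2\beta m\|f\|_\Lnu$; inserting the Bochner identity
\[
\|\naq^*\naq g\|_\Lnu^2 = \|\nabla^2 g\|_\Lnu^2 + \beta\int_\calD \nabla^2 V(\naq g,\naq g)\,\nu(\dd q)
\]
and using~\eqref{eq:regularization condition} — the bound $|\nabla^2 V|\le c_3(1+|\nabla V|)$ to control the curvature term, then $\Delta V\le c_1+\frac{c_2}2|\nabla V|^2$ with $c_2<1$ to absorb, after one further integration by parts, the resulting $\int|\nabla V|^2|\naq g|^2\,\nu(\dd q)$ — should give $\|\nabla^2 g\|_\Lnu\le C\|f\|_\Lnu$ with $C$ depending only on $\beta,m,c_1,c_2,c_3$. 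Combining the three estimates proves the first bound with $R_{\rm ham}$ an explicit multiple of $D^2C/(\beta m)$, and the statement on $\core$ follows (the extension to $\Lmu$ being obtained by density). I expect this elliptic step to be the only genuinely technical part: when $\calD$ is compact, \eqref{eq:regularization condition} is automatic, $\nabla^2 V$ is bounded, and it reduces to standard interior regularity for a uniformly elliptic operator with smooth coefficients.
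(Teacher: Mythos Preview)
Your proof is correct and follows essentially the same approach as the paper. For the second bound both you and the paper observe that $A\LFD = -\frac{1}{m}A$ and invoke Lemma~\ref{lemma:equivalent norms}; for the first bound you work with $A\Lham(1-\Pip)$ directly via a Hermite--grading argument while the paper instead bounds its adjoint $-(1-\Pip)\Lham A^*$, but both computations reduce to the same $\rmH^2(\nu)$ elliptic regularity estimate for $\bigl(1+\frac{1}{\beta m}\naq^*\naq\bigr)^{-1}$ --- which the paper simply cites from~\cite{Dolbeault15} and you sketch via the Bochner identity and Assumption~\ref{ass:potential}.
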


\begin{proof}
The first task is to give a more explicit expression of the operator~$A$. In the following we use frequently the fact that operators acting only on the variables~$q$ (such as $\naq$ and $\naq^*$) commute with operators acting only on variables~$p$ (such as $\nap$, $\nap^*$ and $\Pip$). Moreover the relations $\papi \Pip = 0$, $\Pip \papi^* = 0$ and $\Pip \papi \papj^* = \papi \papj^* \Pip = \frac {\beta} m \Pip \delta_{ij}$ allow to simplify the action of $(\Lham \Pip)^* (\Lham \Pip)$ as follows:
\[
\begin{aligned}
  (\Lham \Pip)^* (\Lham \Pip) 
  & = -\frac{1}{\beta^2} \Pip (\nap^* \naq - \naq^* \nap ) ( \nap^* \naq - \naq^* \nap ) \Pip \\
  & = \frac{1}{\beta^2} \Pip (\naq^* \nap) (\nap^* \naq) \Pip = \frac 1 {\beta m} \naq^* \naq \Pip.
\end{aligned}
\]
The operator $A$ can therefore be reformulated as
\begin{equation}
  \label{eq:reformulate A}
  A = \frac1\beta \left(1 + \frac 1 {\beta m} \naq^* \naq \right)^{-1} \naq^* \Pip \nap.
\end{equation}
To obtain bounds on the operator $A \Lham (1-\Pip)$, we next consider its adjoint:
\[
\begin{aligned}
  -(1 - \Pip) \Lham A^*  
  & = - \frac{1}{\beta^2} (1-\Pip) \left( \nap^* \naq - \naq^* \nap \right) \nap^* \naq \Pip \left(1 + \frac 1 {\beta m} \naq^* \naq \right)^{-1} \\
  & = - \frac{1}{\beta^2} (1-\Pip) \left( \nap^* \naq \nap^* \naq - \frac{\beta}{m} \naq^* \naq\right) \Pip \left(1 + \frac 1 {\beta m} \naq^* \naq \right)^{-1} \\
  & = - \frac1{\beta^2} (1-\Pip) \nap^* \naq \nap^* \naq \Pip \left(1 + \frac 1 {\beta m} \naq^* \naq \right)^{-1},
\end{aligned}
\]
where we used $(1-\Pip) \naq^* \naq \Pip = 0$ in the last line. Moreover, the operator 
\[
\nap^* \naq \nap^* \naq \Pip = \sum_{i,j=1}^D \partial_{p_i}^* \partial_{p_j}^* \Pip \partial_{q_i} \partial_{q_j}
\]
is bounded from $\rmH^2(\nu)$ to $\Lmu$ according to~Lemma~\ref{lemma:momentum operators}. Moreover, as proved in~\cite{Dolbeault15}, 
Assumption~\ref{ass:potential} ensures that the operator $\Pip \left(1 + \frac 1 {\beta m} \naq^* \naq \right)^{-1}$ is bounded from $\Lmu$ to $\rmH^2(\nu)$. In conclusion, $-(1 - \Pip) \Lham A^*$ is bounded on $\Lmu$.

The boundedness of the operator $A \LFD$ comes from the fact that
\[
\begin{aligned}
  \Pip \Lham \LFD &= -\frac{1}{\beta^{2}} \Pip \left( \nap^* \naq - \naq^* \nap \right) \nap^* \nap = \frac{1}{\beta^{2}} \Pip \naq^* \nap \nap^* \nap \\
  & = \frac 1 {\beta m} \Pip \naq^* \nap = - \frac 1 m \Pip \Lham.
\end{aligned}
\]
In conclusion, $A \LFD = -A/m$, which gives the claimed result with Lemma~\ref{lemma:equivalent norms}. 
\end{proof}

We can now proceed with the proof of Proposition~\ref{prop:coercivity_scrD}. 

\begin{proof}[Proof of Proposition~\ref{prop:coercivity_scrD}]
Note first that, for a given $\varphi \in \core$, the entropy dissipation $\scrD[\varphi]$ can be explicitly written as
\begin{equation}
  \label{eq:expansion of D}
  \begin{aligned}
    \scrD[\varphi] &= \lang - \gamma \LFD \varphi, \varphi \rang + \varepsilon \lang A \Lham \Pip \varphi, \varphi \rang + \varepsilon \lang A \Lham (1-\Pip) \varphi, \varphi \rang \\
    &\ \ - \varepsilon \lang  \Lham A \varphi, \varphi \rang + \varepsilon \gamma \lang A \LFD \varphi, \varphi \rang,
  \end{aligned}
\end{equation}
since $\LFD A = \LFD \Pip A = 0$. Using respectively the properties~\eqref{eq:micro coercivity}, \eqref{eq:coercivity A Lham}, \eqref{eq:P4} and Lemma~\ref{lemma:equivalent norms}, it follows
\begin{equation} 
\label{eq:bound on D}
\begin{aligned}
\scrD[\varphi] & \geq \frac \gamma {m} \| (1 - \Pip) \varphi \|^2 + \varepsilon \lambdaham \| \Pip \varphi \|^2 - \varepsilon \left(R_{\rm ham} + \frac{\gamma}{2m}\right)\|(1 - \Pip) \varphi \| \, \| \Pip \varphi \| \\
& \ \ - \varepsilon \lang  \Lham A \varphi, \varphi \rang.
\end{aligned} \end{equation}
Since, by Lemma~\ref{lemma:equivalent norms},
\[
\lang \Lham A \varphi, \varphi \rang = \lang (1-\Pip) \Lham \Pip A (1-\Pip) \varphi, \varphi \rang \leq \|(1 - \Pip) \varphi \|^2,
\]
it holds $\scrD[\varphi] \geq X^\top \bfS X$, where
\[
X = \begin{pmatrix} \| \Pip \varphi\| \\ \| (1-\Pip) \varphi\| \end{pmatrix}, 
\qquad 
\bfS = \begin{pmatrix} S_{--} & S_{-+}/2 \\ S_{-+}/2 & S_{++} \end{pmatrix},
\]
with
\[
S_{--} = \varepsilon \lambdaham,
\qquad
S_{-+} = - \varepsilon \left(R_{\rm ham} + \frac{\gamma}{2m}\right),
\qquad 
S_{++} = \frac \gamma {m} - \varepsilon. 
\]
The smallest eigenvalue of $\bfS$ is 
\[
\Lambda(\gamma,\varepsilon) = \frac {S_{--}+S_{++}} 2 - \frac 1 2 \sqrt{(S_{--}-S_{++})^2+(S_{-+})^2}.
\]
In the limit $\gamma \to 0$, the parameter $\varepsilon$ should be chosen of order~$\gamma$ in order for $\Lambda(\gamma,\varepsilon)$ to be positive (in particular for $S_{++}$ to remain positive). When $\gamma \to +\infty$, the parameter $\varepsilon$ should be chosen of order~$1/\gamma$ in order for the determinant of $\bfS$ to remain positive. We therefore consider the choice
\begin{equation}
  \label{eq:choice_varepsilon}
  \varepsilon = \overline{\varepsilon} \min(\gamma,\gamma^{-1}).
\end{equation}
It is then easy to check that there exists $\overline{\varepsilon} > 0$ sufficiently small such that $\Lambda(\gamma,\overline{\varepsilon} \min(\gamma,\gamma^{-1})) > 0$ for all $\gamma > 0$. Moreover, it can be proved that $\Lambda(\gamma,\overline{\varepsilon} \min(\gamma,\gamma^{-1}))/\gamma$ converges to a positive value as $\gamma \to 0$, while $\gamma \Lambda(\gamma,\overline{\varepsilon} \min(\gamma,\gamma^{-1}))$ converges to a positive value as $\gamma \to +\infty$. This gives the claimed result with $\widetilde{\lambda}_\gamma = \Lambda(\gamma,\overline{\varepsilon} \min(\gamma,\gamma^{-1}))$.
\end{proof}

The proof of Theorem~\ref{th:hypocoercivity} is now easy to obtain. Consider $\varphi_0 \in \rmDom(\calL) \cap \tLmu$ (which contains $\rmH^2(\mu) \cap \tLmu$) and introduce $\scrH(t) = \calH[\varphi(t)]$, where $\varphi(t) = \rme^{t \calL} \varphi_0 \in \rmDom(\calL)$ for any $t \geq 0$. Then, 
\[
\scrH'(t) = -\scrD[\varphi(t)] \leq -\widetilde{\lambda}_\gamma \| \varphi(t) \|^2.
\]
Using the norm equivalence~\eqref{eq:equivalent norms} and the choice~\eqref{eq:choice_varepsilon} for $\overline{\varepsilon}<1$, it follows that 
\[
\scrH'(t) \leq - \frac{2\widetilde{\lambda}_\gamma}{1+\overline{\varepsilon}\min(\gamma,\gamma^{-1})} \scrH(t),
\]
so that, by a Gronwall estimate,
\[
\scrH(t) \leq \scrH(0) \exp\left(- \frac{2\widetilde{\lambda}_\gamma}{1+\overline{\varepsilon}\min(\gamma,\gamma^{-1})} t\right). 
\]
Using again the norm equivalence~\eqref{eq:equivalent norms}, it follows that 
\[
\|\varphi(t)\|^2 \leq \frac{1+\overline{\varepsilon}}{1-\overline{\varepsilon}} \, \rme^{-2\lambda_\gamma t} \|\varphi(0)\|^2, 
\]
with the decay rate
\[
\lambdagamma = \frac{\widetilde{\lambda}_\gamma}{1+\overline{\varepsilon}\min(\gamma,\gamma^{-1})}. 
\]
The desired estimate finally follows by density of $\rmDom(\calL)$ in $\Lmu$.

\section{Proof of technical estimates for the system considered in Section~\ref{s:eq appli}}
\label{app:proof discrete hypocoercivity}

We prove in this section that the conditions~\eqref{eq:additional conditions non-conformal} and~\eqref{eq:conditions hypocoercivity} allowing to apply the results of Section~\ref{s:discrete convergence} hold for the system considered in Section~\ref{s:eq appli}. Recall that the condition $M \to +\infty$ should be understood as $K,L \to +\infty$. Let us also emphasize that, although we perform the computations for the simple potential $V(q) = 1-\cos(q)$, the extension to a general trigonometric polynomial~$V$ is straightforward.

\paragraph{Condition~\eqref{eq:additional conditions non-conformal} and bound on $\|\calL u_K\|$.}
Since $u_M$ depends only on the positions, it is denoted $u_K$ and
\[
\| \calL u_K \|^2 = \| \calL^* u_K \|^2 = \frac{1}{\| \PiK \bfone \|^2} \left\| p \paq \PiK \bfone \right\|^2= \beta \frac {\| \paq \PiK \bfone \|^2}{\| \PiK \bfone \|^2}. 
\]
In order to estimate $\| \paq \PiK \bfone \|$, we decompose $\PiK \bfone$ in the basis under consideration as follows: 
\[
\PiK \bfone = \sum_{j=0}^{2K-2} g_j G_j, \qquad g_j = \lang \PiK \bfone, G_j \rang = \int_\calD G_j \, d\nu. 
\]
Then, using $\paq \PiK \bfone = -\paq (1-\PiK) \bfone$ and (with~\eqref{eq:derivatives G})
\begin{equation}
  \label{eq:derivative_*_q_Gk}
  \forall k \geq 1, 
  \qquad
  \partial_q^* G_{2k} = -\frac{\beta}{4} G_{2k-3} + k G_{2k-1} + \frac{\beta}{4} G_{2k+1},
  \qquad
  \partial_q^* G_{2k-1} = \frac{\beta}{4} G_{2k-2} - k G_{2k} - \frac{\beta}{4} G_{2k+2},
\end{equation}
it follows that, for $K \geq 1$,
\[
\begin{aligned}
  \| \paq \PiK \bfone \|^2 &= \sum_{j \in \bbN} \lang \paq \PiK \bfone, G_{j} \rang^2\\
  &= \sum_{j=0}^{2K-2} \lang -\paq (1-\PiK) \bfone, G_{j} \rang^2 + \sum_{j=2K-1}^{+\infty} \lang \paq \PiK \bfone, G_{j} \rang^2 \\
  &= \sum_{j=0}^{2K-2} \Esp_\nu\left[(1-\PiK) \paq^* G_{j} \right]^2 + \sum_{j=2K-1}^{+\infty} \Esp_\nu\left[\PiK \paq^* G_{j} \right]^2 \\
  & = \Esp_\nu[(1-\PiK) \paq^* G_{2K-3}]^2 + \Esp_\nu[(1-\PiK) \paq^* G_{2K-2}]^2 + \Esp_\nu[\PiK \paq^* G_{2K-1}]^2 + \Esp_\nu[\PiK \paq^* G_{2K}]^2\\
  &= \frac {\beta^2} {16} \left( g_{2K}^2+g_{2K-1}^2+g_{2K-2}^2+g_{2K-3}^2 \right) \leq \frac {\beta^2} {16} \left\| (1-\Pi_{K-1}^q) \bfone \right\|^2.
\end{aligned}
\]
Since $\bfone \in \Hsnu$ for any $s \in \bbN$, it follows that $\| (1-\Pi_{K-1}^q) \bfone \|$ vanishes faster than any polynomial in~$K$ in view of Lemma~\ref{lem:approx_PiK}. This implies that $\| \paq \PiK \bfone \|$, and hence $\| \calL u_K \|$ and $\| \calL^* u_K \|$, vanish faster than any polynomial in~$K$. More precisely, 
\begin{equation}
  \label{eq:L u_M}
  \| \calL^* u_K \|^2 = \| \calL u_K \|^2 \leq \frac{\beta^{3}} {16} \frac{\left\| (1-\Pi_{K-1}^q) \bfone \right\|^2}{\|\Pi_K^q \bfone\|^2} \leq \frac{\beta^{3}} {16} \frac{\left\| (1-\Pi_{K-1}^q) \bfone \right\|^2}{1 - \|(1-\Pi_K^q) \bfone\|^2}.
\end{equation}

\paragraph{Condition~\eqref{eq:conditions hypocoercivity}.}
\newcommand{\Lpm}{\calL_{KL}^{+-}}

Let us now prove that $\| (A+A^*) (1-\PiKL) \calL \PiKL \| \xrightarrow[K,L \to \infty]{} 0$ for the model under consideration. Introducing $\Lpm = (1-\PiKL) \calL \PiKL$, we prove in fact that $A\Lpm$ and $A^*\Lpm$ are bounded operators whose norms converge to~0 as $K,L \to +\infty$. In all this proof, we consider $K \geq 1$ and $L\geq 2$.

The first task is to provide a more explicit expression of $\Lpm$. We introduce to this end the operator $\Pif = \PiKo \paq \PiK$. In view of~\eqref{eq:derivatives G},
\[
\begin{aligned}
  \Pif \varphi &= \sum_{j' = 2K-1}^{+\infty} \sum_{j = 0}^{2K-2} \lang \varphi, G_j \rang \lang \paq G_j, G_{j'} \rang G_{j'} 
  = \frac \beta 4 \Big( \lang \varphi, G_{2K-2} \rang G_{2K-1} - \lang \varphi, G_{2K-3} \rang G_{2K} \Big).
\end{aligned}
\]
This shows that the operator $\Pif$ is bounded on $\Lmu$, and in fact 
\begin{equation}
  \label{eq:Pif}
  \| \Pif \varphi \| \leq \frac \beta 4 \| \PiKpo \PiK \varphi \|.
\end{equation}
Comparing~\eqref{eq:derivative_*_q_Gk} and~\eqref{eq:derivatives G}, we also see that $\Pif = \PiKo \paq \PiK = \PiKo \paq^* \PiK$. We can now compute more explicitly the action of $\Lpm$ by noting that
\[
\beta \Lpm = (1-\PiKL) \paq \pap^* \PiKL - (1-\PiKL) \paq^* \pap \PiKL - \gamma (1-\PiKL) \pap^* \pap \PiKL,
\]
where $(1-\PiKL) \pap^* \pap \PiKL = 0$ by~\eqref{eq:LFD hermite}, while (using~\eqref{eq:derivatives H} to write $\Pi_{L-1}^p \partial_p = \partial_p \Pi_L^p$ and $\Pi_{L+1}^p \partial_p^* = \partial_p^* \Pi_L^p$)
\[
\begin{aligned}	
  (1-\PiKL) \paq \pap^* \PiKL &= (1-\PiK \PiL) \paq \PiLp \pap^* \PiK \\
  & = (1-\PiK \PiL) \paq (\PiL + \PiLo) \PiLp \pap^* \PiK \\
  & = (\PiL + \PiLo \PiLp -\PiK \PiL) \paq \pap^* \PiK \\
  & = \PiL (1-\PiK) \paq \pap^* \PiK + \paq \PiLp \PiLo \pap^* \PiK \\
  & = \PiL \pap^* \Pif + \paq \pap^* \PiL \PiLmo \PiK \\
  &= \pap^* \PiLm \Pif + \paq \pap^* \PiLmo \PiKL, 
\end{aligned}
\]
and 
\[
\begin{aligned}
  (1-\PiKL) \paq^* \pap \PiKL  & = \pap (1-\PiK \PiLp) \paq^* \PiL \PiK = \pap \PiL (1-\PiK) \paq^* \PiK \\
  & = \Pif \pap \PiL.
\end{aligned}
\]
Therefore, 
\begin{equation}
  \label{eq:simpler_expression_Lpm}
  \beta \Lpm = \pap^* \PiLm \Pif + \paq \pap^* \PiLmo \PiKL - \Pif \pap \PiL.
\end{equation}
Moreover $\| \partial_p^* \Pi_{L-1}^p\| \leq \sqrt{\beta (L-1)}$, $\| \partial_p \Pi_L^p\| \leq \sqrt{\beta (L-1)}$ and using the Gerschgorin theorem (see~\cite{Qi84} for example) $\|\paq \Pi_K^q\| \leq K-1+\beta/2$, so the operator $\Lpm$ is bounded, with 
\begin{equation}
\begin{aligned}
  \label{eq:norm Lpm}
  \left\| \Lpm \right\| &\leq \betainv \sqrt{\beta (L-1)} \frac \beta 4 + \betainv \sqrt{\beta L} \left( K-1+\frac \beta 2 \right) + \betainv \sqrt{\beta (L-1)} \frac \beta 4 \\
  &\leq \sqrt{\frac{L}{\beta}} \left( K-1+\beta \right).
\end{aligned}
\end{equation}
We are now in position to provide a more explicit expression of $A \Lpm$ and $A^* \Lpm$ based on~\eqref{eq:simpler_expression_Lpm}. Recalling the definition~\eqref{eq:def_Pip} of $\Pip = \Pi_1^p$, it holds $\Pip \, \PiLmo = 0$ and $\Pip \, \PiLm = \Pip$ for $L \geq 2$. Using also the relation $\Pip \pap \pap^* = \beta$, we obtain
\[
\begin{aligned}
  (\Lham \Pip)^* \Lpm &= \betainv \Pip \paq^* \pap \Lpm \\
  &= \betainv \Pip \paq^* \PiLm \Pif + \betainv \Pip \paq^* \paq \PiLmo \PiKL  - \betainvinv \Pip \paq^* \pap^2 \Pif \PiL \\
  &= \betainv \Pip \paq^* \Pif - \betainvinv \Pip \paq^* \pap^2 \Pif \\
  &= \betainv \Pip\left(1 - \betainvinv \pap^2 \right) \paq^* \Pif
\end{aligned}
\]
since $L \geq 2$. Introducing the generator of the overdamped Langevin dynamics (for $m=1$ here)
\[
\Lovd = -\betainv \paq^* \paq,
\]
it is possible to rewrite~\eqref{eq:reformulate A} as $A = \left( 1 - \Lovd \right)^{-1} \Pip \pap \paq^*$, so that
\begin{equation}
  \label{eq:first rest term}
  A \Lpm = \left( \betainv \Pip - \betainvinv \Pip \pap^2 \right)  (1-\Lovd)^{-1} \paq^* \Pif.
\end{equation}
Similar computations show that (using $ \Pip \pap^* = 0$)
\begin{equation}
  \label{eq:second rest term}
  \begin{aligned}
    A^* \Lpm 
    &= -\beta^{-2} \pap^* \paq (1-\Lovd)^{-1} \Pip \pap \Pif \PiL \\
    &= -\betainvinv \pap^* \Pip \pap \paq (1-\Lovd)^{-1} \Pif.
\end{aligned}
\end{equation}
The momentum operators $\Pip$, $\Pip \pap^2$ and $\pap^* \Pip \pap$ are bounded according to Lemma~\ref{lemma:momentum operators}:
\[
\left\| \betainvinv \Pip \pap^2 - \betainv \Pip \right\|_{\calB(\Lkappa)} \leq \frac{\sqrt 2 + 1}{\beta}, 
\qquad
\left\| \pap^* \Pip \pap \right\|_{\calB(\Lkappa)} \leq \beta,
\]
so that
\begin{equation}
\label{eq:sp gap error reformulation}
\begin{aligned}
  \left\| A \Lpm \right\|_{\calB(\Lmu)} &\leq \frac{\sqrt 2 + 1}{\beta} \left\| (1-\Lovd)^{-1} \paq^* \Pif \right\|_{\calB(\Lnu)}, \\
  \left\| A^* \Lpm \right\|_{\calB(\Lmu)} &\leq \frac1\beta \left\| \paq (1-\Lovd)^{-1} \Pif \right\|_{\calB(\Lnu)}.
\end{aligned}
\end{equation}

At this stage, it remains to prove that the operators on $\Lnu$ in the right-hand sides of the previous inequalities are bounded, with vanishing norms as $K \to +\infty$. We use to this end the following decompositions: 
\[
(1-\Lovd)^{-1} \paq^* \Pif = T_1 S_{1,K} \Pif, \qquad \paq (1-\Lovd)^{-1} \Pif = T_2 S_{2,K} \Pif,
\]
with (using $\Pif = \PiKmo \, \Pif$)
\begin{equation}
  \begin{aligned}
    T_1 & = (1-\Lovd)^{-1} \paq^* (1 - \tLovd)^{\half}, & \qquad S_{1,K} = (1 - \tLovd)^{-\half} \PiKmo, \\
    T_2 & = \paq  (1-\Lovd)^{-\half}, & \qquad S_{2,K} = (1 - \Lovd)^{-\half} \PiKmo,
  \end{aligned} 
\end{equation}
where we introduced the symmetric negative operator $\tLovd = -\betainv \paq \paq^*$. Let us show that $T_1$ and $T_2$ are bounded and $S_{1,K}$ and $S_{2,K}$ can be made small for $K$ sufficiently large. Note first that 
\[
\begin{aligned}
T_1 T_1^* & = (1-\Lovd)^{-1} \paq^* \left(1 - \tLovd\right) \paq (1-\Lovd)^{-1} \\
& = (1-\Lovd)^{-1} \left( \paq^* \paq + \betainv \paq^* \paq \paq^* \paq \right) (1-\Lovd)^{-1} = - \beta  (1-\Lovd)^{-1} \Lovd,
\end{aligned}
\]
so that, by spectral calculus, $0 \leq T_1 T_1^* \leq \beta$. This shows that $T_1^*$ and $T_1$ are bounded operators on $\Lnu$, with $\|T_1^*\|=\|T_1\| \leq \sqrt{\beta}$. Similarly, 
\[
T_2^* T_2 = -\beta (1-\Lovd)^{-\half} \Lovd (1-\Lovd)^{-\half},
\]
from which we deduce $\|T_2^*\|=\|T_2\| \leq \sqrt{\beta}$. We next prove that the operators $S_{1,K}$ and $S_{2,K}$ can be made as small as wanted by increasing $K$. We start by proving the following lemma.

\begin{lemma}
  \label{lem:ineq_1_Lovd}
  For $K \geq 2$, the following inequalities hold in the sense of symmetric operators:
  \[
  1 - \Lovd \geq \betainv (K-1)^2 \PiKmo, 
  \qquad 
  1 - \tLovd \geq \betainv (K-1)^2 \PiKmo.
  \]
\end{lemma}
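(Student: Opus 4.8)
The inequalities concern operators acting only on the position variable, on $\Lnu$, and my plan is to undo the Gibbs weight $\rme^{-\beta V/2}$ by the unitary conjugation already introduced in the proof of Lemma~\ref{lem:approx_PiK}, which turns $-\Lovd$ and $-\tLovd$ into Schrödinger operators on the flat torus, for which the claimed bounds reduce to an elementary Fourier estimate. Recall that $U\varphi = Z_{\beta,\nu}^{-1/2}\rme^{-\beta V/2}\varphi$ is a unitary map from $\Lnu$ onto $\rmL^2(2\pi\bbT)$ sending each weighted mode $G_k$ to the corresponding flat Fourier mode $\widetilde G_k$ ($1/\sqrt{2\pi}$, $\cos(kq)/\sqrt\pi$ or $\sin(kq)/\sqrt\pi$). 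Consequently $\PiKm$ is conjugated by $U$ to the orthogonal projector onto $\mathrm{Span}\{1,\cos q,\sin q,\dots,\cos((K-2)q),\sin((K-2)q)\}$, which is exactly the spectral subspace of $-\partial_q^2$ attached to eigenvalues $\le (K-2)^2$; equivalently, $U\PiKmo U^* = \mathbf 1_{\{-\partial_q^2 \ge (K-1)^2\}}$ is a spectral projector of $-\partial_q^2$.

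Next I would conjugate the first order operators. Since $U^*\widetilde\psi = Z_{\beta,\nu}^{1/2}\rme^{\beta V/2}\widetilde\psi$ and $\paq^* = -\partial_q + \beta V'$ on $\Lnu$, a short computation gives $U\paq U^* = \partial_q + a$ and $U\paq^* U^* = -\partial_q + a$, where $a := \tfrac\beta2 V'$, so that
\[
U(-\Lovd)U^* = \betainv (-\partial_q+a)(\partial_q+a) = \betainv\big(-\partial_q^2 - a' + a^2\big),
\qquad
U(-\tLovd)U^* = \betainv (\partial_q+a)(-\partial_q+a) = \betainv\big(-\partial_q^2 + a' + a^2\big).
\]
For $V(q) = 1 - \cos q$ one has $a = \tfrac\beta2\sin q$, hence $a^2 \ge 0$ and $|a'| = \tfrac\beta2|\cos q| \le \tfrac\beta2$, so that the multiplication potentials $\mp a' + a^2$ are $\ge -\tfrac\beta2$ \emph{pointwise}. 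Since $U(1-\Lovd)U^* = 1 + U(-\Lovd)U^*$ and likewise for $\tLovd$, this yields, as symmetric operators on $\rmL^2(2\pi\bbT)$,
\[
1 + U(-\Lovd)U^* \ge \tfrac12 + \betainv(-\partial_q^2) \ge \betainv(-\partial_q^2),
\qquad
1 + U(-\tLovd)U^* \ge \tfrac12 + \betainv(-\partial_q^2) \ge \betainv(-\partial_q^2).
\]

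It then remains to compare $\betainv(-\partial_q^2)$ with $\betainv(K-1)^2\,\mathbf 1_{\{-\partial_q^2\ge(K-1)^2\}}$ mode by mode: on frequencies $|n|\ge K-1$ the left side equals $\betainv n^2 \ge \betainv(K-1)^2$, while on the remaining frequencies the right side vanishes and the left side is nonnegative; hence $\betainv(-\partial_q^2) \ge \betainv(K-1)^2\, U\PiKmo U^*$ on $\rmL^2(2\pi\bbT)$. Conjugating the whole chain of operator inequalities back by the unitary $U$ (which preserves positivity) gives $1-\Lovd \ge \betainv(K-1)^2\PiKmo$ and $1-\tLovd \ge \betainv(K-1)^2\PiKmo$. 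Note that this argument delivers the \emph{unsandwiched} operator inequalities directly, with no need to worry about the non-commutation of $\PiKmo$ with the generators.

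The computation is essentially bookkeeping, and the only mildly delicate point — the main obstacle, such as it is — is the \emph{pointwise} lower bound on the conjugated potential $\mp a' + a^2$: it is here that the explicit form of $V$ enters, through $\mp a' + a^2 \ge -\tfrac\beta2\|V''\|_{\Linf}$ (the term $a^2$ being discarded as nonnegative), which equals $-\tfrac\beta2$ for $V=1-\cos q$ for any $\beta>0$. For a general trigonometric polynomial $V$ the same scheme applies verbatim, the constant $\tfrac12$ above being replaced by $1-\tfrac12\|V''\|_{\Linf}$; this is nonnegative in particular whenever $\|V''\|_{\Linf}\le 2$, and in the remaining cases one argues exactly as above for $K$ with $\betainv(K-1)^2\le 1$ (where the bound $1-\Lovd\ge 1$ already suffices) and retains part of the $a^2$ term otherwise, which is all that is needed for the asymptotic estimates of this appendix.
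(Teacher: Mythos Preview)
Your proof is correct and takes a somewhat different, cleaner route than the paper's. The paper computes the matrix of $1-\Lovd$ explicitly in the weighted Fourier basis $(G_k)$ via the recurrences~\eqref{eq:derivatives G}, obtaining a band matrix with diagonal entries $1+\tfrac{\beta}{8}+\tfrac{k^2}{\beta}$ and off-diagonal entries of total absolute value $\tfrac{\beta}{8}+\tfrac12$ per row, and then verifies that $1-\Lovd - \big(\betainv(K-1)^2 + \tfrac12\big)\PiKmo$ is diagonally dominant, hence positive semidefinite. Your unitary conjugation by $U$ exhibits $1-\Lovd$ directly as $1+\betainv(-\partial_q^2 - a' + a^2)$ on the flat torus, bounds the potential $-a'+a^2 \ge -\tfrac{\beta}{2}$ pointwise, and then reads off the inequality from the spectrum of $-\partial_q^2$, using that $U\PiKmo U^*$ is the spectral projector $\mathbf 1_{\{-\partial_q^2\ge (K-1)^2\}}$. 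The two arguments are equivalent at their core---your pointwise bound on the potential is exactly what drives the paper's diagonal dominance---but yours makes the structure transparent, avoids the explicit matrix arithmetic, and, as you note, yields the unsandwiched operator inequality directly without worrying about the non-commutation of $\PiKmo$ with the generators. The closing paragraph on general trigonometric potentials is a helpful remark but goes slightly beyond what the lemma (stated for $V=1-\cos q$) requires.
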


\begin{proof}
The operator $1-\Lovd$ can be expressed in the $\Lmu$-orthonormal basis $G_k$ as
\begin{equation}
  \label{eq:expression_Lovd_Gk_basis}
  \left\{ \begin{aligned}
    (1-\Lovd) G_{2k-1} &= -\frac \beta {16} (G_{2k-5} + G_{2k+3}) - \frac 1 4 (G_{2k-3} + G_{2k+1}) + \left( 1 + \frac \beta 8 + \frac{k^2}{\beta} \right) G_{2k-1}, \\
    (1-\Lovd) G_{2k} &= -\frac \beta {16} (G_{2k-4} + G_{2k+4}) - \frac 1 4 (G_{2k-2} + G_{2k+2}) + \left( 1 + \frac \beta 8 + \frac{k^2}{\beta} \right) G_{2k}.
  \end{aligned} \right.
\end{equation}
Similar formulas hold for $1-\tLovd$, upon changing the factors $-1/4$ into $1/4$ in the above expressions. Therefore, the symmetric operators $1-\Lovd - \left( \betainv (K-1)^2 + \frac 1 2 \right) \PiKmo$ and $1-\tLovd - \left( \betainv (K-1)^2 + \frac 3 2 \right) \PiKmo$ can be represented by diagonally dominant matrices in the basis $(G_k)$, which shows that these operators are positive.
\end{proof}

\newcommand{\calAmm}{\calA^{--}}
\newcommand{\calAmp}{\calA^{-+}}
\newcommand{\calApm}{\calA^{+-}}
\newcommand{\calApp}{\calA^{++}}

\begin{lemma}
  \label{lem:bounds_A++^-1}
There exists $K_0 \in \bbN$ such that, for any $K \geq K_0$, the following inequalities hold in the sense of symmetric operators: 
\[
0 \leq \PiKmo (1 - \Lovd)^{-1} \PiKmo \leq \frac{2 \beta}{K^{2}}, 
\qquad 
0 \leq \PiKmo \left(1 - \tLovd\right)^{-1} \PiKmo \leq \frac{2 \beta}{K^{2}}.
\]
\end{lemma}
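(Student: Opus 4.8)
The plan is to reduce the statement to an estimate on the compression of the resolvent of the self-adjoint operator $B := 1 - \Lovd$ (and, identically, $1 - \tLovd$) to the range of $\PiKmo$, and to cope with the fact that $\PiKmo$ and $B^{-1}$ do not commute by a Schur-complement argument. Note first that $\Lovd = -\betainv \paq^* \paq \leq 0$, so $B \geq 1 > 0$ and $B$ is boundedly invertible on $\Lnu$; the positivity $\PiKmo B^{-1}\PiKmo \geq 0$ is then immediate, since $\langle B^{-1}(\PiKmo\varphi),\PiKmo\varphi\rangle \geq 0$ for all $\varphi$. The content is the upper bound. The naive attempt — to invoke directly the lower bound $B \geq \betainv(K-1)^2 \PiKmo$ of Lemma~\ref{lem:ineq_1_Lovd} — fails, because such a bound controls $B$ only ``in the $\PiKmo$-direction'' and does not rule out $B$ having small eigenvalues on vectors that lie almost, but not entirely, in $\mathrm{Ran}(\PiKmo)$.

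To get around this, I would decompose $\Lnu = E_- \oplus E_+$ with $E_- = \mathrm{Ran}(\PiKm)$ and $E_+ = \mathrm{Ran}(\PiKmo)$, and write $B$ in the corresponding $2 \times 2$ block form with blocks $B_{\pm\pm}$. Since $B$ is self-adjoint and positive definite and $B_{--} = \PiKm B \PiKm \geq \PiKm$ is invertible on $E_-$, the $(+,+)$-block of $B^{-1}$ — which is exactly $\PiKmo B^{-1}\PiKmo$ read as an operator on $E_+$ — equals the inverse of the Schur complement $S_K := B_{++} - B_{+-} B_{--}^{-1} B_{-+}$, with $B_{-+} = B_{+-}^*$. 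Three ingredients then finish the estimate: (i) $\|B_{--}^{-1}\|_{\calB(E_-)} \leq 1$, from $B \geq 1$; (ii) $B_{++} \geq \betainv(K-1)^2$ on $E_+$, which is Lemma~\ref{lem:ineq_1_Lovd} compressed to $E_+$; and (iii) — the crucial point — the off-diagonal coupling $B_{+-} = \PiKmo(1-\Lovd)\PiKm$ has operator norm bounded by a constant $C_\beta$ independent of $K$. Ingredient (iii) holds because, by~\eqref{eq:expression_Lovd_Gk_basis}, the matrix of $1-\Lovd$ in the $\Lmu$-orthonormal basis $(G_k)$ is the sum of a diagonal part $D$ (annihilated by $\PiKmo(\cdot)\PiKm$) and a fixed, banded, symmetric off-diagonal part $R$, with entries $-\beta/16$ at distance $4$ and $-1/4$ at distance $2$; a Schur test gives $\|R\|_{\calB(\Lnu)} \leq \beta/8 + 1/2 =: C_\beta$ uniformly in $K$, whence $\|B_{+-}\| = \|\PiKmo R \PiKm\| \leq C_\beta$.

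Combining (i)--(iii), $B_{+-}B_{--}^{-1}B_{-+} = B_{+-}B_{--}^{-1}B_{+-}^* \leq \|B_{--}^{-1}\|\,\|B_{+-}\|^2 \leq C_\beta^2$ as symmetric operators on $E_+$, so
\[
S_K = B_{++} - B_{+-}B_{--}^{-1}B_{-+} \geq \betainv(K-1)^2 - C_\beta^2,
\]
and therefore $\PiKmo B^{-1}\PiKmo = S_K^{-1} \leq (\betainv(K-1)^2 - C_\beta^2)^{-1}$. Since $\betainv(K-1)^2 - K^2/(2\beta) = (K^2 - 4K + 2)/(2\beta) \to +\infty$, one may pick $K_0 = K_0(\beta) \geq 2$ such that $\betainv(K-1)^2 - C_\beta^2 \geq K^2/(2\beta)$ for all $K \geq K_0$, which yields $0 \leq \PiKmo(1-\Lovd)^{-1}\PiKmo \leq 2\beta/K^2$. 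The argument for $1 - \tLovd$ is word for word the same: by the remark following~\eqref{eq:expression_Lovd_Gk_basis} the operator $1-\tLovd$ has the same diagonal part and an off-diagonal part obtained from $R$ by flipping the signs of the distance-$2$ entries, hence with the same norm bound $C_\beta$, and the lower bound $1 - \tLovd \geq \betainv(K-1)^2\PiKmo$ is also supplied by Lemma~\ref{lem:ineq_1_Lovd}. I expect the only delicate point to be step (iii): without a $K$-uniform bound on the off-diagonal coupling the Schur complement need not grow and the conclusion would be false; it is the banded (in fact local) structure of $1-\Lovd$ in the weighted Fourier basis adapted to $V(q) = 1 - \cos q$ that makes it work.
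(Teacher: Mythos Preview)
Your proposal is correct and follows essentially the same approach as the paper: a $2\times2$ block decomposition of $B=1-\Lovd$ with respect to $\PiKm/\PiKmo$, the Schur complement formula for $(\calA^{-1})^{++}$, the bound $(\calAmm)^{-1}\leq 1$, Lemma~\ref{lem:ineq_1_Lovd} for the lower bound on $\calApp$, and the banded structure \eqref{eq:expression_Lovd_Gk_basis} to control the off-diagonal block uniformly in~$K$. The only cosmetic difference is in step~(iii): the paper bounds $\|\calApm\|^2$ by the Frobenius norm of the finitely many nonzero entries near the interface (obtaining $\|\calApm\|^2\leq 1/8+\beta^2/64$), whereas you use a Schur test on the full off-diagonal part to get $C_\beta=\beta/8+1/2$; both yield a $K$-independent constant and the rest of the argument is identical.
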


\begin{proof}
  We write the proof for the operator $\calA = 1 - \Lovd$, the result for $1-\tLovd$ being obtained by similar manipulations. Consider the following block decomposition with respect to $\PiKmo$ for $K$ fixed: 
  \[
  \calA = \begin{pmatrix}
    \calAmm & \calAmp \\ \calApm & \calApp
  \end{pmatrix}.
  \]
  More precisely, $\calAmm = \PiKm \calA \PiKm$, $\calAmp = \PiKm \calA \PiKmo$, $\calApm = \PiKmo \calA \PiKm$ and $\calApp = \PiKmo \calA \PiKmo$. A similar decomposition holds for $\calA^{-1}$. With this notation, the goal is to estimate $\left( \calA^{-1} \right)^{++} = \PiKmo (1 - \Lovd)^{-1} \PiKmo$. By the Schur complement formula, 
  \[
  \left( \calA^{-1} \right)^{++} = \left[ \calApp - \calApm \left( \calAmm \right)^{-1} \calAmp \right]^{-1},
  \]
  provided the operators under consideration are all invertible. By Lemma~\ref{lem:ineq_1_Lovd},
  \[
  \calApp - \calApm \left(\calAmm\right)^{-1} \calAmp \geq \left( \frac{(K-1)^2}{\beta}  - \| \calApm \|^2 \left\| \left( \calAmm \right)^{-1} \right\| \right) \PiKmo.
  \]
  Since $\left({\calAmm}\right)^{-1} \leq 1$ (because $\calAmm \geq 1$) and, in view of~\eqref{eq:expression_Lovd_Gk_basis}, 
  \[
  \| \calApm \|^2 \leq \frac 1 8 + \frac {\beta^2} {64},
  \]
  the Schur complement is invertible for $K$ sufficiently large, and its inverse is a symmetric operator satisfying
  \[
  0 \leq \left( \calA^{-1} \right)^{++} \leq \left[ \frac{(K-1)^2}{\beta} - \left( \frac 1 8 + \frac {\beta^2} {64} \right) \right]^{-1} \PiKmo. 
  \]
  The right-hand side is, in turn, smaller than $2 \beta/K^{2}$ for $K \geq K_0$ with $K_0$ sufficiently large. 
\end{proof}

Since $S_{2,K}^* S_{2,K} = \PiKmo (1 - \Lovd)^{-1} \PiKmo$ and $S_{1,K}^* S_{1,K} = \PiKmo (1 - \tLovd)^{-1} \PiKmo$, Lemma~\ref{lem:bounds_A++^-1} immediately implies that
\begin{equation}
  \forall K \geq K_0, \qquad 
  \| S_{1,K} \|_\Lnu \leq \frac{\sqrt{2\beta}}{K},
  \qquad
  \| S_{2,K} \|_\Lnu \leq \frac{\sqrt{2\beta}}{K}.
\end{equation}
The conclusion now follows from~\eqref{eq:Pif} (which implies that $\left\| \Pif \right\|_\Lnu \leq \beta/4$) and~\eqref{eq:first rest term}-\eqref{eq:second rest term}, which lead to 
\[
\left\| T_1 S_{1,K} \Pif \right\|_\Lnu \leq \frac {\sqrt 2 \beta^2} {4 K},
\qquad
\left\| T_2 S_{2,K} \Pif \right\|_\Lnu \leq \frac {\sqrt 2 \beta^2} {4 K}.
\]
Using~\eqref{eq:sp gap error reformulation}, we finally obtain
\[
\left\| (A+A^*) \Lpm \right\|_{\calB(\Lmu)} \leq \frac {(1+ \sqrt 2) \beta} {2 K}.
\]

\paragraph{Final explicit estimates.}
Using the bounds provided in this appendix, it is easily seen that the constant $\tlambdagammaKL$ introduced in Corollary~\ref{coro:non-conformal hypocoercivity} satisfies~\eqref{eq:estimate_tlambdagammaM_explicit}. It is then possible to make explicit the resolvent bound~\eqref{eq:bound inverse tcalL}.

\bibliographystyle{abbrv}
\bibliography{bibliography}

\end{document}